\documentclass[1p]{elsarticle}
\usepackage{mystyle}
\usepackage{amsmath}
\usepackage[colorlinks=true, citecolor=cyan, linkcolor=blue, urlcolor=cyan, filecolor=magenta]{hyperref}
\numberwithin{equation}{section}

\makeatletter
\def\ps@pprintTitle{
 \let\@oddfoot\@empty
 \let\@evenfoot\@empty
}
\makeatother
\begin{document}
\title{Type-\RomanNumeralCaps{1} Blowup Solutions for Yang--Mills Flow}
\author[snu]{Jaehwan Kim}
\ead{jhhope1@snu.ac.kr}
\author[kias]{Sanghoon Lee}
\ead{sl29@kias.re.kr}
\address[snu]{\scriptsize Seoul National University, 1 Gwanak-ro, Gwanak-gu, Seoul 08826, Republic of Korea.}
\address[kias]{\scriptsize Korea Institute for Advanced Study, 85 Hoegiro, Dongdaemun-gu, Seoul 02455, Republic of Korea.}

\begin{abstract}
    In this paper, we construct an infinite-dimensional family of solutions for the Yang--Mills flow on $\RR^n \times SO(n)$ for $5 \leq n \leq 9$, which converge to $SO(n)$-equivariant homothetically shrinking solitons, modulo the gauge group.
    As a corollary, we prove the existence of asymmetric Type-\RomanNumeralCaps{1} blowup solutions for the Yang--Mills flow.
\end{abstract}
\maketitle

\section{Introduction}
\subsection{Background for the Yang--Mills flow}
Let $E$ be a vector bundle of rank $r$ with a metric over a Riemannian manifold $(M, g)$ of dimension $n$.
Denote by $A$ the connection 1-form of a metric connection on $E$.
In a local chart $(x_1, \ldots, x_n) \in U \subseteq \RR^n$ with an orthonormal frame $\Set{e_1, \ldots, e_r}$, the connection 1-form $A$ is expressed as $A = A_i dx^i$ where $A_i: U \to \mf{so}(r)$.
Let $F_A$ denote the curvature form of $A$.
The curvature form $F_A$ is given explicitely by $F_A = \frac{1}{2} F_{ij} dx^i \wedge dx^j$ where
\begin{align*}
    F_{ij} = \partial_i A_j - \partial_j A_i + [A_i, A_j]: U \to \mf{so}(r).
\end{align*}
The Yang--Mills energy functional is defined as
\begin{align*}
    \text{YM}(A) = \frac{1}{2}\int_M \|F_A\|^2 d\text{Vol}_g.
\end{align*}

Let $D_A$ denote the covariant differential associated with $A$ and $D_A^*$ denote its adjoint.
Introducing a time dependence of $A$, the Yang--Mills flow is defined as the gradient flow of the Yang--Mills energy functional:
\begin{align} \label{eq:YM-flow}
    \tfrac{\partial}{\partial t} A(x, t) = -D_A^* F_A(x, t).
\end{align}

The Yang--Mills flow has been extensively studied in various geometric settings. For a more comprehensive exposition, we refer readers to Feehan's monograph \cite{Feehan}. Here, we focus on the dynamical properties of the Yang--Mills flow.

R{\aa}de \cite{Rade} established long-time existence results for the Yang--Mills flow on two- and three-dimensional manifolds. In the four-dimensional case, which is the critical dimension, Struwe \cite{Struwe} and Schlatter \cite{Schlatter-long-time-behavior-4dim} investigated global weak solutions of the Yang--Mills flow, allowing for the possibility of finitely many point singularities. Long-time existence was established by Schlatter \cite{Schlatter-long-time-existence-small-data} for the case of small initial data, and by Hong and Tian \cite{Hong-Tian} as well as Schlatter, Struwe, and Tahvildar-Zadeh \cite{Schlatter} for equivariant cases. More recently, Waldron \cite{Waldron} resolved the general long-time existence problem (see also \cite{Waldron-1, Waldron-2}). Additionally, the behavior of the flow in infinite time is explored in \cite{Sire-Wei-Zheng}.

However, in supercritical dimensions $n \geq 5$, finite-time blowup may occur.
The asymptotic behavior and the singular set structure of the Yang-Mills flow in dimensions $n \geq 4$ were studied in \cite{Hong-Tian-Asymptotical-behavior}.
Finite time blowup for $SO(n)$-bundle over $S^n$ was studied by Naito \cite{Naito}, and for $SO(n)$-bundle over $\RR^n$ by Grotowski \cite{Grotowski} and Gastel \cite{Gastel}.
Weinkove \cite{Weinkove} explored Type-\RomanNumeralCaps{1} singularities of the Yang--Mills flow, showing that the blowup subsequence converges to a \emph{homothetically shrinking soliton} up to gauge transformation. We would like to note further that the existence of self-similar blowup solutions in higher dimensions $n \ge 10$ was excluded by Bizo\'{n} and Wasserman \cite{Bizon-Wasserman}.

A homothetically shrinking soliton centered at $(0, 1) \in \RR^n \times \RR$, or simply a \emph{soliton}, is a solution $A_j: \RR^n \times (-\infty, 1) \to \mf{so}(n)$ of the Yang--Mills flow \eqref{eq:YM-flow} on a Euclidean bundle over $\RR^n$ that satisfies the following scale invariance property
\begin{align*}
    A_j (x,1 - s) = \lambda A_j(\lambda x,1 - \lambda^2s)
\end{align*}
for all $\lambda > 0$ and $s > 0$.
If we denote $A_0$ as the connection at $t = 0$ of a soliton $A$, its curvature form satisfies
\begin{align} \label{eq:soliton-curvature}
    D_{A_0}^*F_{A_0} + \tfrac{1}{2} x \cdot F_{A_0} = 0,
\end{align}
where $x \cdot $ denotes the interior product by the position vector $x$.
An example of a soliton was given in \cite{Weinkove} for the rank $n$ Euclidean bundle over $\RR^n$ for $5 \leq n \leq 9$.
Explicitly, the soliton $\bf{W} = \bf{W}_i dx^i$, where $\bf{W}_i: \RR^n \times (-\infty, 1) \to \mf{so}(n)$, is given by
\begin{align*}
    \mathbf{W}_i(x, t) = \frac{1}{\sqrt{1-t}} W_i(\frac{x}{\sqrt{1-t}}).
\end{align*}
Here, $W = W_i dx^i$ and $\sigma = \sigma_i dx^i$, where $W_i, \sigma_i: \RR^n \to \mf{so}(n)$ are defined as
\begin{align*}
    W_i(y) = \frac{\sigma_i(y)}{a|y|^2+b}, \quad (\sigma_{i})_\nu^\mu(y) = \delta_{i\nu} y^\mu - \delta_{i\mu} y^\nu,
\end{align*}
with constants
\begin{align*}
    a = \frac{\sqrt{n - 2}}{2\sqrt{2}}, \quad b = \frac{1}{2}(6n - 12 - (n+2)\sqrt{2n-4}).
\end{align*}

By analogy with the work of Colding and Minicozzi on the mean curvature flow(MCF) \cite{CM-genericMCF}, Chen and Zhang \cite{Chen-Zhang-entropy}, as well as Kelleher and Streets \cite{Kelleher-Street-entropy}, investigated the entropy stability of solitons.
Kelleher and Streets \cite{Kelleher-generel-blowup} utilized the entropy to provide a general description of blowup solutions for the Yang--Mills flow on closed Riemannian manifolds in dimensions $n\geq 4$.
They showed that, up to gauge transformations, the flow converges to either a soliton or a Yang--Mills connection as a blowup limit at singularities.

Under the $SO(n)$-equivariance condition, a connection $A(x, t)$ can be expressed as the multiplication of $\sigma(x)$ and a scalar function $w(|x|, t)$.
This formulation reduces the tensor-valued PDE system on $\RR^n$ to a scalar-valued PDE on $[0, \infty)$.
For the $SO(n)$-equivariant case, stability near the soliton $\bf{W}$ without any gauge transformation was demonstrated by Donninger and Sch\"orkhuber \cite{DONNINGER} for $n = 5$, and later extended by Glogi\'c and Sch\"orkhuber \cite{GLOGIC} for $5 \leq n \leq 9$ using the spectral analysis of the linearized operator of the flow.
However, without $SO(n)$-equivariance condition, no general stability results or even the construction of non-$SO(n)$-equivariant solutions converging to the soliton have been addressed.

\subsection{Spectral analysis on geometric flows and main theorem}
We briefly outline the method used in this paper to construct a solution $u$ of a forward parabolic PDE that converges exponentially to a stationary solution as $t \rightarrow \infty$. Let $L$ denote the time-independent linearized operator of the flow at the stationary solution $u_0$, and let $\mc N$ represent the nonlinear term.
Suppose that $-L$ admits a discrete spectral decomposition in an appropriate $L^2$-space.

Our goal is to find an exponentially decaying solution $v(x, t)$ that satisfies the equations
\begin{align*}
    \tfrac{\partial}{\partial t} v = L & v + \mc N (v), \\
    \Pi_{> 0} (v(\cdot, 0)             & - v_0) = 0
\end{align*}
for some initial condition $v_0$, where $\Pi_{> 0}$ is a spectral projector onto the positive eigenspace of $-L$. It is important to note that specifying $\tilde{v}(\cdot, 0) = v_0$ without applying the projection may lead to a solution $\tilde{v}$ that fails to decay to zero, as negative eigenfunctions could dominate the dynamics. By projecting, we ensure that the positive eigenfunctions control the limiting behavior of the solution as $t \rightarrow \infty$.

To achieve this, we first study the initial value problem for the corresponding linear PDE of the form $\big( \frac{\partial }{\partial t}- L \big) v =  h$. After obtaining a priori estimates for the linear problem, we introduce a parabolic H\"older space with appropriate weights to guarantee the desired decay of solutions in both space and time. Subsequently, we estimate the nonlinear error term $\mc N(v)$ in terms of $v$ and use a fixed-point argument to demonstrate the existence of a solution when the initial data $v_0$ has a sufficiently small norm. This approach yields a solution $u = u_0 + v$ that converges exponentially to $u_0$.

In this paper, we construct a family of converging solutions to the Yang--Mills flow near the soliton $\bf{W}$ without any symmetry condition.
We will precisely define the linearized operator \( L \) of the rescaled Yang--Mills de-Turck flow at the soliton in a Gaussian-weighted \( L^2 \)-space and the spectral projector $\Pi_{>0}$ in Section \ref{sec:preliminaries}.
\begin{theorem} \label{thm:main-theorem-intro}
    Fix $5 \leq n \leq 9$.
    For every smooth connection $\tilde{A}_0$ sufficiently close to \( W \), there exists a smooth solution $A$ to the Yang--Mills flow such that
    \begin{align*}
        \Pi_{>0}(A(\cdot, 0) - W - \tilde{A}_0) = 0,
    \end{align*}
    and converges to the soliton $\mathbf{W}$ up to gauge transformation.
\end{theorem}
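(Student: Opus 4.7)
The strategy follows the spectral-analytic scheme outlined in the introduction. I would introduce self-similar coordinates $s = -\log(1-t)$, $y = x/\sqrt{1-t}$, and append a DeTurck vector field to \eqref{eq:YM-flow} in order to break the gauge degeneracy. Writing $A = W + v$ in these variables, the problem becomes
\begin{equation*}
\partial_s v = L v + \mathcal{N}(v),
\end{equation*}
where $L$ is the time-independent linearization at $W$ and $\mathcal{N}$ collects the at-least-quadratic terms. The Gaussian-weighted $L^2$ inner product is the natural setting in which $-L$ is self-adjoint with discrete spectrum.

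The linear half of the proof consists of developing the spectral theory of $L$ on this weighted $L^2$ space, building on the $SO(n)$-equivariant analysis of Glogi\'{c}--Sch\"orkhuber \cite{GLOGIC}. I would decompose the space into $SO(n)$-isotypic components; on each sector $L$ restricts to a scalar Schr\"odinger-type operator on a half-line, reducing the spectral problem to a family of one-dimensional ODEs. This identifies $\Pi_{>0}$ as a finite-rank projector whose range contains the single genuine unstable direction together with the zero modes coming from the soliton's translation, dilation, rotation, and gauge symmetries. On the complementary subspace the semigroup $e^{sL}$ satisfies an estimate of the form $\|e^{sL}\Pi_{\leq 0} h\|_X \leq C e^{-\delta s}\|h\|_X$ in a suitable weighted norm.

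With the linear decay in hand, I would introduce a parabolic H\"older space with exponential temporal weight and polynomial spatial weight reflecting the backward-heat-kernel geometry, and rewrite the problem via Duhamel as a fixed-point equation
\begin{equation*}
v(s) = e^{sL}\Pi_{\leq 0}\tilde{A}_0 + \int_0^s e^{(s-s')L}\Pi_{\leq 0}\mathcal{N}(v(s'))\,ds'.
\end{equation*}
By construction the condition $\Pi_{>0}(v(0) - \tilde{A}_0) = 0$ is automatic, and standard quadratic estimates for $\mathcal{N}$ combined with the exponential semigroup decay reduce matters to a contraction-mapping argument on a small ball in this space. Parabolic regularity upgrades the resulting $A$ to a smooth solution, and unwinding the DeTurck vector field by a smooth family of gauge transformations returns a genuine Yang--Mills solution converging to $\mathbf{W}$ modulo gauge.

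The main obstacle I anticipate is the spectral analysis of $L$ in the absence of the $SO(n)$-equivariance assumption. In the equivariant setting the problem collapses to a single radial ODE, whereas the full linearization couples all $SO(n)$-representations appearing in the tangent space of connections at $W$. Sector by sector one must separate the symmetry-induced zero modes from genuinely unstable eigenvalues, verify that only finitely many positive eigenvalues occur, and match this with sharp weighted nonlinear estimates in order to close the contraction. I expect this spectral bookkeeping, together with the interaction between the DeTurck gauge fixing and the projection $\Pi_{>0}$, to constitute the bulk of the technical work.
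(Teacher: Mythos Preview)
Your overall scheme---self-similar variables, DeTurck gauge, linearization at $W$, and a fixed-point argument---matches the paper's. But your description of $\Pi_{>0}$ is backwards: in the paper's convention it projects onto the \emph{positive} spectrum of $-L$, which is the infinite-dimensional \emph{stable} subspace, not the finite-rank unstable-plus-neutral part. Correspondingly, your Duhamel formula is incomplete. Projecting both the initial datum and the nonlinearity onto a single spectral subspace and integrating forward does not produce a solution of the full equation $\partial_s v = Lv + \mathcal{N}(v)$; the unstable components of $\mathcal{N}(v)$ are simply discarded. The correct Lyapunov--Perron integral couples a forward piece on the stable modes with a backward-from-infinity piece on the unstable modes, which is what fixes the unstable part of $v(0)$ (cf.\ \eqref{eq:u-j-for-j-leq-I} in Lemma~\ref{lem:existence-unique-sol-parabolic-PDE}).

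More seriously, you have misjudged where the work lies. The paper does \emph{not} perform any isotypic decomposition or sector-by-sector ODE analysis; discreteness of the spectrum follows from a soft compactness argument (Lemma~\ref{lem:spectral-properties}), and beyond the translation eigenvalues $-1$ and $-1/2$ nothing explicit is needed. What you call ``standard quadratic estimates'' is in fact the heart of the proof: the domain is non-compact, the drift $\tfrac{1}{2}y\cdot\nabla$ has unbounded coefficients, and the unknown is tensor-valued, so neither local Schauder theory nor $L^2_{\boldsymbol{\rho}}$ semigroup bounds yield the global weighted $C^{2,\alpha}$ control required to close the contraction. The paper obtains this via a refined Kato inequality for the Frobenius norm retaining the commutator terms (Lemma~\ref{lem:Kato-inequality}), barrier comparisons propagating $C^0$ bounds to spatial infinity (Lemma~\ref{lem:supnorm-estimate}), a bespoke weighted norm $\|\cdot\|_*$ with matched spatial weights on all derivatives so that a rescaling argument applies to the initial-value problem, and sharp polynomial growth bounds $|\xi|\lesssim \tilde r^{\,2\lambda-1}$ on eigenfunctions (Lemma~\ref{lem:supnorm-estimate-eigenstate}) needed to control the projected initial data. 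None of these ingredients appears in your outline, and without them the nonlinear estimate and hence the fixed-point argument do not close.
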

See Theorem \ref{thm:main-theorem} for more precise statements.
As a corollary of the main theorem, we construct a non-$SO(n)$-equivariant converging solution:
\begin{corollary} \label{cor:non-equivariant-solution}
    There exists a non-$SO(n)$-equivariant solution to the Yang--Mills flow that converges to the soliton $\bf{W}$ up to gauge transformation.
\end{corollary}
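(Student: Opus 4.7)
The plan is to exploit the infinite-dimensional nature of the solution family produced by Theorem~\ref{thm:main-theorem-intro} and to select a perturbation $\tilde A_0$ that carries a component transverse to $SO(n)$-invariance.

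Because the soliton $W$ is $SO(n)$-equivariant, the linearized operator $L$ at $W$ commutes with the natural $SO(n)$-action on the Gaussian-weighted $L^2$-space. Consequently every spectral projector of $-L$, and in particular $\Pi_{>0}$, respects the $SO(n)$-isotypic decomposition of the Hilbert space into its spherical (equivariant) part and its higher angular-momentum sectors. Since $-L$ has discrete spectrum bounded below with only finitely many non-positive eigenvalues, its positive eigenspace is infinite-dimensional, and in each non-trivial isotypic sector all but finitely many eigenmodes of $-L$ lie in this positive eigenspace.

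With this in hand, I would pick a smooth stable eigenmode $\eta$ of $-L$ lying entirely inside a non-trivial $SO(n)$-isotypic sector, and set $\tilde A_0 = \epsilon\,\eta$ with $\epsilon > 0$ small enough to apply Theorem~\ref{thm:main-theorem-intro}. The theorem then produces a solution $A$ whose stable part satisfies
\begin{align*}
    \Pi_{>0}\bigl(A(\cdot,0) - W\bigr) = \Pi_{>0}\tilde A_0 = \epsilon\,\eta,
\end{align*}
so $A(\cdot,0) - W$ has a non-zero component in a non-equivariant isotypic sector. If $A(\cdot,0)$ were $SO(n)$-equivariant, then $A(\cdot,0) - W$ would be equivariant (as $W$ is), and hence so would $\Pi_{>0}(A(\cdot,0) - W)$; but equivariance of the latter contradicts the non-equivariance of $\eta$. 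Therefore $A(\cdot,0)$ is not $SO(n)$-equivariant, and by uniqueness of smooth solutions together with the $SO(n)$-covariance of the Yang--Mills flow this property propagates to every time. Convergence $A \to \mathbf{W}$ modulo gauge is supplied directly by Theorem~\ref{thm:main-theorem-intro}.

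The main obstacle, and the only ingredient beyond Theorem~\ref{thm:main-theorem-intro} itself, is exhibiting the non-equivariant stable eigenmode $\eta$. This rests on the spectral picture developed in Section~\ref{sec:preliminaries}: because $L$ preserves the $SO(n)$-isotypic decomposition and its restriction to each component is a self-adjoint operator with discrete spectrum bounded below, only finitely many eigenvalues on any such component can be non-positive, while the remaining (infinitely many) eigenmodes contribute to $\Pi_{>0}$. This guarantees an ample supply of admissible $\eta$ and completes the argument.
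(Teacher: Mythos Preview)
Your argument has a genuine gap: it only shows that $A(\cdot,0)$ is not \emph{literally} $SO(n)$-equivariant, whereas the corollary (per the definition immediately following its statement) requires that $A$ not be \emph{gauge equivalent} to any $SO(n)$-equivariant solution. These are different things. Given any equivariant connection $B_0$ and any gauge transformation $g$ that fails to commute with the $SO(n)$-action, the connection $g^*B_0$ is not equivariant, yet is obviously gauge equivalent to one. Your contradiction step, ``if $A(\cdot,0)$ were equivariant then so would $A(\cdot,0)-W$ and hence $\Pi_{>0}(A(\cdot,0)-W)$,'' rules out only the first possibility; it says nothing about whether $A(\cdot,0)$ might equal $g^*B_0$ for some equivariant $B_0$ and non-equivariant $g$. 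The paper handles this by comparing a \emph{gauge-invariant} scalar, namely $\|F_{W+u}{}_{ij}(x,0)\|_F$, at the antipodal points $\pm R e_1$; any solution gauge equivalent to an equivariant one would have equal curvature norms there, and the construction forces them to differ.

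There is a second, more technical gap. To feed $\tilde A_0 = \epsilon\,\eta$ into Theorem~\ref{thm:main-theorem} you need $\|\eta\|_{2,\alpha}^{(d)} < \infty$ for some $-1 \le d \le -1+\gamma$. By Lemmas~\ref{lem:supnorm-estimate-eigenstate} and~\ref{lem:eigenfunction-schauder-estimate}, an eigenfunction with eigenvalue $\lambda$ lies in $C_{2,\alpha}^{(2\lambda-1)}$, so this forces $\lambda \le \gamma/2 < 1/200$. Nothing in Section~\ref{sec:preliminaries} guarantees that the non-trivial isotypic sectors contain an eigenvalue in the narrow window $(0,\gamma/2]$; the discrete spectrum could well skip it. The paper sidesteps this entirely by taking $\tilde u_0$ to be a compactly supported cutoff of $-W$ translated far from the origin, which lies in the required weighted space for trivial reasons.
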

Here, a solution is considered non-$SO(n)$-equivariant if it is not gauge equivalent to any $SO(n)$-equivariant solution.

The spectral analytic approach has been widely adapted to study the dynamics of various geometric flows.
For instance, Angenent and Vel{\'a}zquez \cite{Angenent-Degenerate-Neckpinches} constructed a rotationally symmetric solution exhibiting a Type-\RomanNumeralCaps{2} blowup by perturbing a cylinder slightly in the direction of an eigenfunction of a rescaled MCF in a compact region, and then capping it off.
While spectral analysis plays a key role in such constructions, it has also been applied to analyze asymptotic behaviors of solutions.
In a pioneering work, Sesum \cite{Sesum-Ricci} established the exponential convergence of the Ricci flow near a stable Ricci flat metric by analyzing the linearized operator associated with the Ricci-DeTurck flow on a closed manifold.
Furthermore, Sesum obtained the optimal convergence rate of the rescaled MCF of a smooth, convex surface toward a round sphere in \cite{Sesum-MCF}(see also \cite{Sesum-convergence-ricci-soliton, Sesum-limiting-behavior}).
It is worth noting that the round sphere is dynamically unstable, but entropy-stable \cite{CM-genericMCF}, and the flow can be stabilized by adjusting its rescaling center, which corresponds to adding negative eigenfunctions of the linearized operator.

These spectral analysis approaches have been applied in the classification of ancient solutions to various geometric flows.
An ancient solution of a flow is a solution defined for all $t \in (-\infty, T)$.
In the MCF, Angenent, Daskalopoulos, and Sesum \cite{ADS-asymptotics-MCF} established the precise asymptotics of symmetric ancient solutions by linearizing the flow around the round cylinder and showing that the eigenfunction associated with eigenvalue zero dominates in the asymptotically cylindrical region.
Brendle and Choi \cite{Brendle-Choi-1, Brendle-Choi-2} analyzed strictly convex, non-collapsed, noncompact ancient solutions of the MCF by linearizing the flow around the cylinder, showing exponential convergence as $t \to -\infty$.
Using the neck improvement theorem, they proved these solutions are unique and coincide with the bowl soliton.
The neck improvement theorem and related estimates \cite{ADS-asymptotics-MCF} also helped classify two-convex, closed ancient solutions \cite{ADS-TWO-CONVEX}.
For further results based on spectral analysis at cylinders, see \cite{CHHW-ancient-asymptotic-cylinder, CHH-Ancient-low-entropy, du2024hearing, choi2022classification, choi2021nonexistence}.

In the Ricci flow, the analysis in the cylindrical region has been applied to study $\kappa$-noncollapsed ancient solutions.
This includes $\kappa$-noncollapsed ancient solutions on spheres in dimensions three and higher \cite{Brendle-Daskalopoulos-Naff-Sesum, BDS-compact-ancient-Ricci-flow, ABDS-Ricci-cylindrical-asymptotic}, as well as noncompact $\kappa$-noncollapsed ancient solutions \cite{Brendle,Brendle-Naff}, extending results from lower-dimensional cases (one or two dimensions) \cite{DHS-classification-compact-ancient-curve-shortening-flow, DHS-classification-compact-ancient-Ricci-flow}.

Using negative eigenvectors of the Jacobi operator of a minimal surface, Choi and Mantoulidis \cite{Choi} constructed a family of ancient MCF exponentially converging to the given minimal surface.
In this case, the negative eigenfunctions dominate the dynamics of the flow as $t \rightarrow -\infty$.
A similar analysis comprises one of the key elements of \cite{CHODOSH,CHODOSH-MCF-2} for studying singularity formulation of generic MCF.
When a non-trivial zero eigenfunction governs the overall behavior of the flow, polynomial convergence may occur, as constructed by Carlotto, Chodosh, and Rubinstein \cite{Carlotto} in the case of Yamabe flow.
The asymptotic behaviors of ancient ovals in the MCF \cite{ADS-asymptotics-MCF} and three-dimensional Ricci flow \cite{ABDS-Ricci-cylindrical-asymptotic} also exhibit polynomial convergence over time, driven by the dominance of a neutral eigenfunction.
More recently, Choi and Hung \cite{ChoiB} resolved Thom's gradient conjecture through a detailed analysis of the dominance of the neutral mode in slowly converging solutions.

\subsection{Comments on the method of proof}

Our main challenges arise from the fact that our PDE is defined on a non-compact domain and involves tensor-valued rather than scalar-valued functions.
As discussed in Section \ref{subsec:weak-sense-estimates}, we initially work with solutions in the Gaussian \( L^2 \)-space, which exhibit exponential decay in time but lack guaranteed spatial decay.
Although local Schauder \( L^2 \)-estimates (Theorem \ref{thm:parabolic-Schauder-Lp-estimate}) provide \( C^{2, \alpha} \)-regularity within compact subsets, extending these estimates to the entire domain is highly nontrivial due to the unbounded coefficients of the linearized operator. Achieving global \( C^{2, \alpha} \)-estimate is crucial for obtaining the necessary nonlinear estimates.

To overcome this difficulty, we employ a refined version of Kato's inequality (Lemma \ref{lem:Kato-inequality}) to obtain a global $C^0$-estimate for tensor-valued functions.
In ancient MCF, which deals with scalar functions, estimates are derived by comparison with a barrier function as in \cite{ADS-asymptotics-MCF}(see also \cite[Lemma 3.15]{CHODOSH2}).
In the Yang--Mills setting, the analysis involves tensor-valued quantities, necessitating control over the tensor norm. Since direct comparison of tensor-valued functions is not feasible, we instead compare their norms, making the use of Kato's inequality indispensable.
In contrast to other versions of Kato's inequality, such as \cite[Lemma 3.1]{Uhlenbeck-Kato}, where the Cauchy-Schwarz inequality is applied early in comparing \( |\nabla u| \) and \( |\nabla |u|| \), our version of Kato's inequality carefully preserves Lie bracket terms, which account for the difference between \( |\nabla u| \) and \( |\nabla |u|| \) and enable more refined estimates.

Estimating global \( C^{2, \alpha} \)-norm in our setting differs from the case of ancient flows, as we are dealing with an initial boundary value problem. For instance, in ancient MCF, Lemma 3.3 of \cite{CHODOSH} bounds the spatially weighted H\"older norm of an ancient solution using a standard rescaling argument. However, in our case, the flow does not exist for \( t \to -\infty \), making the rescaling method inapplicable.

To address this issue, we introduce a weighted parabolic H\"older norm, applying the same weights to all derivative terms, unlike the stronger weights used in the existing norm. This adjusted norm allows us to bound the solution in the initial boundary value problem using a rescaling argument. Despite the weaker norm, nonlinear estimates can still be obtained through interpolation inequalities, although the H\"older exponent $\alpha$ must remain sufficiently small.
Estimates for the initial value problem involve terms containing norms of the initial data. However, to construct a solution with the desired decay properties, we need to perturb the component of the initial data in the non-positive eigenspace while keeping the positive eigenspace component fixed. This makes it crucial to estimate the decay behavior of eigenfunctions.

We successfully obtained these estimates by applying the refined Kato's inequality to the solution of the parabolic PDE of the form $e^{-\lambda t}\xi$, generated by a single eigenfunction of the linearized operator. This approach allowed us to derive sharp upper bounds on the growth of eigenfunctions, extending and refining the bounds previously established for the lowest eigenfunction in \cite[Proposition 4.1]{Bernstein}.

It is important to note that eigenfunctions corresponding to eigenvalues greater than $1/2$ may not be bounded. As a result, the main theorem considers initial conditions in a weighted H\"older space with appropriate growth conditions rather than restricting them to finite linear combinations of eigenfunctions, as is typically done in the case of ancient flows.

\subsection{Organization of the paper}
In Section \ref{sec:preliminaries}, we introduce the rescaled Yang--Mills de-Turck flow and the associated linearized operator. We provide a brief overview of the spectral properties of the linearized operator and present explicit eigenfunctions, including those corresponding to time and spatial translations.

In Section \ref{sec:estimates-for-linearized-de-turck-flow}, we analyze the linearized rescaled Yang--Mills de-Turck flow equation near the soliton. Specifically, in Sections \ref{subsec:weak-sense-estimates} and \ref{subsec:pointwise-estimates}, we derive various estimates for solutions of the linearized equation and establish nonlinear estimates. Estimates for the eigenfunctions of the linearized operator are provided in Section \ref{subsec:eigenfunction-estimates}.

We apply these estimates to construct solutions to the Yang--Mills flow, proving the main theorem and its corollary in Section \ref{sec:main-theorem}.

\section*{Acknowledgements}

The authors express their sincere gratitude to Professor Kyeongsu Choi for suggesting the problem and providing valuable insights and discussions. The second author was supported by KIAS Individual Grant, with grant number: MG096101.

\section{Preliminaries} \label{sec:preliminaries}

\subsection{Formulation of the rescaled Yang--Mills de-Turck flow}
We introduce the argument in \cite[Section 4]{Struwe} (see also \cite{de-Turck, Donaldson}) to make a linearized flow elliptic.
Let $\psi(x, t) = \mathbf{W}(x, t) + \phi(x, t)$ denote the solution of the Cauchy problem with the initial condition $A_0$ as
\begin{align} \label{eq:De-Turck-YM-flow}
    \begin{cases*}
        \tfrac{\partial}{\partial t} \psi + D_\psi^*F_\psi + D_\psi D_\psi^* \phi = 0, \\
        \psi|_{t=0} = A_0.
    \end{cases*}
\end{align}
Through the identification
\begin{align} \label{eq:gauge-transformation-ode}
    S^{-1}\tfrac{\partial}{\partial t} S = -D_\psi^* \phi,
\end{align}
the solution $\phi(x, t)$ induces a gauge transformation $S(x, t)$, which can be determined from the ODE with the initial condition $S(x, 0) = \Id$ for all $x \in \RR^n$.
If we define $A$ as a gauge transformation of $\psi$ by $S^{-1}$ as
\begin{align*}
    A \defeq (S^{-1})^*\psi = S \psi S^{-1} + SdS^{-1},
\end{align*}
then the connection $A$ satisfies the Yang--Mills flow equation
\begin{align*}
    \tfrac{\partial}{\partial t} A = -D_A^* F_A.
\end{align*}
The equation \eqref{eq:De-Turck-YM-flow} is called the \emph{Yang--Mills de-Turck flow}.

We introduce similarity coordinates $(y, \tau) \in \RR^n \times \RR$ defined as
\begin{align*}
    y = \frac{x}{\sqrt{1 - t}}, \quad \tau = -\log(1 - t),
\end{align*}
to leverage the scale invariance of the soliton.
By setting
\begin{align*}
    u_i(y, \tau) & \defeq e^{-\tau/2}\phi_i(e^{-\tau/2}y, 1 - e^{-\tau}),                         \\
    B_i(y, \tau) & \defeq e^{-\tau/2}\psi_i(e^{-\tau/2}y, 1 - e^{-\tau}) = W_i(y) + u_i(y, \tau),
\end{align*}
equation \eqref{eq:De-Turck-YM-flow} transforms into
\begin{align}
    \tfrac{\partial}{\partial \tau}u_j & = \tfrac{\partial}{\partial \tau} B_j  \notag                                                                                                           \\
                                       & =  \Delta B_j - \tfrac{1}{2}(y \cdot \nabla B)_j - \tfrac{1}{2}B_j + \partial_i [B_i, B_j] + [B_i, \partial_i B_j - \partial_j B_i + [B_i, B_j]] \notag \\
                                       & \quadn{10}+ \partial_j[B_i, u_i] + [B_j, \partial_i u_i + [B_i, u_i]] \notag                                                                            \\
                                       & = L(u)_j + \mc N(u)_j \label{eq:transformed-YM-flow-3}
\end{align}
where $L$ is a linearized operator and $\mc N$ is a nonlinear operator defined by
\begin{align}
    L(u)_j     & \defeq \Delta u_j -\tfrac{1}{2}(y \cdot \nabla u)_j - \tfrac{1}{2}u_j + 2[W_i, \partial_i u_j] + [W_i, [W_i, u_j]]\notag \\
               & \quadn{15} +2[u_i, \partial_i W_j - \partial_j W_i + [W_i, W_j]] \label{eq:de-turck-linearized-operator}                 \\
               & = -(D_W D_W^* + D_W^* D_W + \tfrac{1}{2}) u_j - y^i {\nabla_W}_i u_{j} - [u_i, {F_W}_{ij}], \notag                       \\
    \mc N(u)_j & \defeq 2[u_i, \partial_i u_j + [W_i, u_j]] - [u_i, \partial_ju_i + [W_j, u_i]] - [u_i, [u_j, u_i]].\notag
\end{align}

\subsection{Spectral properties of the linearized operator} \label{subsec:spectral-properties}
Denote $\Omega^1(\mf{so}(n))$ the space of $\mf{so}(n)$-valued 1-forms on $\RR^n$.
We check a spectral property of the linear operator $L$ in $L^2_{\boldsymbol{\rho}}(\Omega^1(\mf{so}(n)))$ where $\boldsymbol{\rho}(x) = e^{-|x|^2/4}$ is a weight function.
Let $U: L^2(\Omega^1(\mf{so}(n))) \to L^2_{\boldsymbol{\rho}}(\Omega^1(\mf{so}(n))); u(\cdot) \mapsto e^{|\cdot|^2/8}u(\cdot)$ be a unitary operator.
Then, the operator $-L$ transforms into an operator $\mc A$ on $L^2(\Omega^1(\mf{so}(n)))$ as
\begin{align}
    (\mc A \varphi)_j & \defeq -(U^{-1} L U \varphi)_j                                  \notag                                                                             \\
                      & = -\big(\Delta \varphi_j+ (\tfrac{1}{16}(4(n-2) - |y|^2) - \tfrac{1}{2})\varphi_j + 2[W_i, \partial_i \varphi_j] + [W_i, [W_i, \varphi_j]]  \notag \\
                      & \quadn{15}+ 2[\varphi_i, \partial_i W_j - \partial_j W_i + [W_i, W_j]]\big)             \notag                                                     \\
                      & = (D_WD_W^* + D_W^*D_W)\phi_j + \tfrac{1}{16}(|y|^2 - 4(n-2))\phi_j - [\phi_i, {F_W}_{ij}] \label{eq:schrodinger-operator-1}.
\end{align}

The operator $L$ is self-adjoint on $L^2_{\boldsymbol{\rho}}$ as each term of $\mc A$ is self-adjoint on $L^2$ in the expression \eqref{eq:schrodinger-operator-1}.
Moreover, we have:
\begin{lemma} \label{lem:spectral-properties}
    The linear operator $-L$ has a discrete spectrum $\lambda_1 \leq \lambda_2 \leq \cdots$ with $\lim_i \lambda_i = \infty$ and a corresponding complete $L_{\boldsymbol{\rho}}^2$-orthonormal set $\xi_1, \xi_2, \cdots \in L^2_{\boldsymbol{\rho}}(\Omega^1(\mf{so}(n)))$ such that $L \xi_i = -\lambda_i \xi_i$.
    \begin{proof}
        It is enough to prove the same statement for the operator $-\mc A$ in $L^2$.
        Since $\|{(F_W)_{ij}}^{\mu}_{\nu}\|_{L^\infty}$ and $\|{W_i}^{\mu}_{\nu}\|_{L^\infty}$ are bounded, the operator $\mc A$ is bounded below from the expression \eqref{eq:schrodinger-operator-1}.
        We use $\|\cdot\|$ to denote the $L^2$ norm in the proof for brevity.
        By \cite[Theorem \RomanNumeralCaps{13}.64]{METHODS-OF-MODERN-MATHEMATICAL-PHYSICS}, it suffices to show that
        \begin{align*}
            F_{\mc A} \defeq \Set{u \in \mc Q(A)\given \|u\| \leq 1, \langle u, \mc A u \rangle \leq b},
        \end{align*}
        where $\mc Q(A)$ denotes the form domain of $\mc A$ and $b > 0$, is compact.
        Let $V(y) = \frac{1}{16}(|y|^2 - 4(n - 2))$.
        From \eqref{eq:schrodinger-operator-1} and H\"older's inequality, we obtain
        \begin{align*}
            b & \geq \langle u, \mc A u \rangle                                                                  \\
              & = \|D_W u \|^2 + \| D_W^* u \|^2 + \langle u, V u \rangle - \langle u_j, [u_i, F_{ij}] \rangle   \\
              & \geq \tfrac{1}{2}(\|d u\|^2 + \|d^* u \|^2) + \tfrac{1}{16}\langle u, |y|^2 u \rangle - C\|u\|^2 \\
              & \geq \tfrac{1}{2}\sum_{i,j}\|\partial_i u_j\|^2 +  \tfrac{1}{16}\langle u, |y|^2 u \rangle - C,
        \end{align*}
        for $u \in F_{\mc A}$ and some constant $C$.
        The inequalities
        \begin{align*}
            \langle \hat{{u_j}^\mu_\nu}, p^2\hat{{u_j}^\mu_\nu}\rangle = \sum_i \|\partial_i {u_j}^\mu_\nu\|^2 \leq 2(b + C) \quad \text{ and } \quad \langle {u_j}^\mu_\nu, \tfrac{1}{16}|y|^2 {u_j}^\mu_\nu\rangle \leq b + C.
        \end{align*}
        imply that ${u_j}^\mu_\nu$ is contained in a compact subset of $L^2(\RR^n)$ by the Rellich's criterion(see \cite[Theorem \RomanNumeralCaps{13}.65]{METHODS-OF-MODERN-MATHEMATICAL-PHYSICS}).
    \end{proof}
\end{lemma}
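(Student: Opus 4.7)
The plan is to pass from the weighted space $L^2_{\boldsymbol{\rho}}$ to plain $L^2$ via the unitary $U$, reducing the problem to proving that the Schr\"odinger-type operator $\mc A$ of \eqref{eq:schrodinger-operator-1} has purely discrete spectrum accumulating only at $+\infty$. That operator consists of a Bochner Laplacian, the confining harmonic potential $V(y) = \tfrac{1}{16}(|y|^2 - 4(n-2))$, and zeroth-order tensor terms built from $W_i$ and $(F_W)_{ij}$. Because $W$ and $F_W$ are globally bounded on $\RR^n$ by their explicit formulas, these zeroth-order terms are bounded perturbations of a harmonic-oscillator-type operator, for which discreteness of the spectrum is the expected outcome.

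To make this rigorous I would invoke the standard form-compactness criterion (e.g.\ Theorem XIII.64 of \cite{METHODS-OF-MODERN-MATHEMATICAL-PHYSICS}): it is enough that $\mc A$ be bounded below and that each form-norm sublevel set $F_{\mc A} = \{u \in \mc Q(\mc A) : \|u\|_{L^2} \le 1,\ \langle u, \mc A u\rangle \le b\}$ be precompact in $L^2$. Both consequences should fall out of a single Bochner-style lower bound. Expanding
\[
\langle u, \mc A u\rangle = \|D_W u\|^2 + \|D_W^* u\|^2 + \langle u, V u\rangle - \langle u_j, [u_i, (F_W)_{ij}]\rangle,
\]
absorbing the sign-indefinite Lie-bracket term into $C\|u\|^2$ via $\|F_W\|_{L^\infty} < \infty$, and comparing the gauge-covariant Dirichlet energy with the flat one at the cost of another $C\|u\|^2$, I expect to arrive at an inequality of the form
\[
\langle u, \mc A u\rangle \ge \tfrac{1}{2}\sum_{i,j}\|\partial_i u_j\|_{L^2}^2 + \tfrac{1}{16}\langle u, |y|^2 u\rangle - C\|u\|_{L^2}^2.
\]
On $F_{\mc A}$ this simultaneously gives uniform $H^1$-control and uniform $|y|^2$-weighted $L^2$-control on every scalar component ${u_j}^\mu_\nu$, so Rellich's criterion for $L^2(\RR^n)$ (control of the gradient together with decay in $|y|$ implies precompactness) yields compactness of $F_{\mc A}$.

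The form-compactness criterion then produces eigenvalues $\mu_1 \le \mu_2 \le \cdots \to \infty$ of $\mc A$ together with a complete $L^2$-orthonormal eigenbasis, which conjugates through $U$ to the desired $L^2_{\boldsymbol{\rho}}$-orthonormal eigenbasis $\{\xi_i\}$ of $-L$ with eigenvalues $\lambda_i = \mu_i$. The step I expect to require the most care is the quadratic-form lower bound itself: the cross-term $\langle u_j, [u_i, (F_W)_{ij}]\rangle$ is not sign-definite, and replacing $D_W, D_W^*$ by the flat $\partial, \partial^*$ produces commutator terms linear in $W$. One must verify that both corrections cost only a multiple of $\|u\|^2$ and do not cut into the leading confining term $\tfrac{1}{16}\langle u, |y|^2 u\rangle$; given the global $L^\infty$-bounds on $W$ and $F_W$, this absorption is routine, and nothing else in the argument is delicate.
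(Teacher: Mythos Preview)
Your proposal is correct and follows essentially the same route as the paper: reduce to $\mc A$ on plain $L^2$ via the unitary $U$, invoke \cite[Theorem~XIII.64]{METHODS-OF-MODERN-MATHEMATICAL-PHYSICS} to reduce to compactness of the form-sublevel sets, derive the same quadratic-form lower bound $\langle u,\mc A u\rangle \ge \tfrac12\sum_{i,j}\|\partial_i u_j\|^2 + \tfrac1{16}\langle u,|y|^2 u\rangle - C\|u\|^2$ by absorbing the bounded $W$ and $F_W$ terms, and conclude by Rellich's criterion \cite[Theorem~XIII.65]{METHODS-OF-MODERN-MATHEMATICAL-PHYSICS}. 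The only cosmetic difference is that the paper passes through the intermediate expression $\tfrac12(\|du\|^2+\|d^*u\|^2)$ before arriving at $\tfrac12\sum_{i,j}\|\partial_i u_j\|^2$, whereas you go there directly.
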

Since $\lambda_i \to \infty$ as $i \to \infty$, there exists an index $I$ such that
\begin{align*}
    \lambda_1 \leq \cdots \leq \lambda_I \leq 0 < \lambda_{I+1} \leq \cdots.
\end{align*}
We define the spectral projector $\Pi_{\sim \mu}: L_{\boldsymbol{\rho}}^2(\Omega^1(\mf{so}(n))) \to L_{\boldsymbol{\rho}}^2(\Omega^1(\mf{so}(n)))$ for a binary relation $\sim \in \Set{=, \neq, <, >, \leq, \geq}$ as:
\begin{align*}
    \Pi_{\sim \mu} : f \mapsto \sum_{j: \lambda_j \sim \mu} \langle f,  \xi_j \rangle_{\rho} \xi_j.
\end{align*}
In the remainder of the section, we present some eigenpairs of $-L$ explicitly that correspond to the time or spatial translations of the soliton $W$.
First, an eigenfunction $\mathbf{g}$ corresponds to the time translation with eigenvalue $-1$ is given by
\begin{align*}
    \mathbf g_j(y) \defeq C(y^i\partial_i W_j + W_j) = C'\frac{\sigma_j(y)}{(a\rho^2 + b)^2}.
\end{align*}

Note that the eigenfunction $\mathbf{g}$ of the de-Turck flow corresponds to the lowest eigenfunction of the linearized operator of equivariant Yang--Mills flow, as stated in \cite[(3.2)]{DONNINGER}.
This coincidence occurs because the $D_W^* \mathbf{g}$ term vanishes in the equivariant setting, resulting in the de-Turck flow equation being the same as the equivariant Yang--Mills flow equation.
However, this is not true in the general case.

We now present eigenfunctions corresponding to the spatial translations of the soliton $W$.

\begin{lemma}
    $-L$ has eigenfunctions $\Set{{F_W}_\alpha}_{\alpha \in [1, n]}$ with eigenvalue $-1/2$ where
    \begin{align*}
        ({F_W}_{\alpha})_j = \partial_\alpha W_j - \partial_j W_\alpha + [W_\alpha, W_j].
    \end{align*}
    \begin{proof}
        For brevity, we will write $D$ for $D_W$ and $F$ for $F_W$ in this proof.
        The terms $D^*D F_\alpha$ and $DD^*F_\alpha$ are calculated as
        \begin{align*}
            D^*D (F_\alpha)_j & = D^*(D_i F_{\alpha j} + D_j F_{i \alpha})                                                                      \\
                              & = D^*(D_\alpha F_{ij})\qquad (\text{2nd Bianchi identity})                                                      \\
                              & = -\partial_i(\partial_\alpha F_{ij} + [W_\alpha, F_{ij}]) - [W_i, \partial_\alpha F_{ij} + [W_\alpha, F_{ij}]) \\
                              & = -D_\alpha (\frac{1}{2}x^i F_{ij}) - [F_{i\alpha}, F_{ij}]                                                     \\
                              & = -\frac{1}{2}x^iD_\alpha F_{ij}-\frac{1}{2}F_{\alpha j} - [F_{i\alpha}, F_{ij}],                               \\
            DD^*(F_\alpha)_j  & = -D_j(\frac{1}{2}x^iF_{\alpha i})                                                                              \\
                              & = -\frac{1}{2}x^i D_j F_{\alpha i} - \frac{1}{2}F_{\alpha j}
        \end{align*}
        where we used the relation \eqref{eq:soliton-curvature}.
        Then, we obtain
        \begin{align*}
            L (F_\alpha)_j & = -(D^*D + DD^*)(F_\alpha)_j - \frac{1}{2}(F_\alpha)_j - \frac{1}{2}x^i D_i F_{\alpha j} + [(F_\alpha)_i, F_{ij}] \\
                           & = \frac{1}{2} F_{\alpha j} \qquad (\text{2nd Bianchi identity}). \qedhere
        \end{align*}
    \end{proof}
\end{lemma}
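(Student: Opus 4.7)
The plan is to recognize $F_{W,\alpha}$ as the gauge-covariant infinitesimal generator of the spatial translation $W(y) \mapsto W(y+\epsilon e_\alpha)$ of the soliton, and to verify $L F_\alpha = \tfrac{1}{2} F_\alpha$ by direct computation. Heuristically, a unit translation in the original $x$-coordinates pulls back to a translation of magnitude $e^{\tau/2}$ in the similarity coordinates $y = x e^{\tau/2}$, so the corresponding linear mode should grow at rate $e^{\tau/2}$; this is consistent with $F_{W,\alpha}$ lying in the eigenspace of $-L$ with eigenvalue $-\tfrac{1}{2}$.

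For the direct verification I would use the Hodge form of $L$ from \eqref{eq:de-turck-linearized-operator} and compute $L F_\alpha$ term by term, leaning on two structural identities. The first is the second Bianchi identity $D_\alpha F_{ij} + D_i F_{j\alpha} + D_j F_{\alpha i} = 0$, which simplifies $(D_W F_\alpha)_{ij} = D_i F_{\alpha j} - D_j F_{\alpha i}$ to $D_\alpha F_{ij}$ and, in the dual form $D_\alpha F_{ij} + D_j F_{\alpha i} = D_i F_{\alpha j}$, will be reused at the end. The second is the soliton equation \eqref{eq:soliton-curvature}, which gives $D_i F_{ij} = \tfrac{1}{2} y^i F_{ij}$ and in particular $D_W^* F_\alpha = -\tfrac{1}{2} y^i F_{\alpha i}$.

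The core step is evaluating $(D_W^* D_W + D_W D_W^*) F_\alpha$. For the first piece I apply $D_W^*$ to $(D_W F_\alpha)_{ij} = D_\alpha F_{ij}$ and commute the covariant derivatives via the Ricci identity $[D_i, D_\alpha] \phi = [F_{i\alpha}, \phi]$ on an $\mf{so}(n)$-valued tensor $\phi$, which produces a quadratic curvature term $[F_{i\alpha}, F_{ij}]$ together with $-D_\alpha D_i F_{ij}$; the soliton equation then handles the latter. The companion term $D_W D_W^* F_\alpha$ is obtained by straightforward differentiation of the scalar $D_W^* F_\alpha$.

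At the end I expect two cancellations. First, the quadratic curvature $[F_{i\alpha}, F_{ij}]$ from the derivative commutator cancels the zero-order $[F_{\alpha i}, F_{ij}]$ contribution coming from the $[u_i, F_{ij}]$ part of $L$, by $F_{\alpha i} = -F_{i\alpha}$. Second, the first-order terms $\tfrac{1}{2} y^i(D_\alpha F_{ij} + D_j F_{\alpha i})$ inherited from the Hodge Laplacian combine via the second Bianchi identity into $\tfrac{1}{2} y^i D_i F_{\alpha j}$, cancelling the transport contribution from $L$. What remains is exactly $\tfrac{1}{2} F_{\alpha j}$. The main difficulty is bookkeeping signs, index orderings, and Lie brackets throughout; as a sanity check, $F_\alpha \in L^2_{\boldsymbol{\rho}}$ because $F_W$ decays like $|y|^{-2}$ at infinity, so the $F_{W,\alpha}$ are genuine eigenfunctions in the weighted Hilbert space.
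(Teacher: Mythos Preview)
Your proposal is correct and follows essentially the same route as the paper: both use the Hodge form of $L$, apply the second Bianchi identity to reduce $D_W F_\alpha$ to $D_\alpha F$, invoke the soliton equation \eqref{eq:soliton-curvature} to evaluate $D_W^* F_\alpha$ and $D_i F_{ij}$, commute covariant derivatives to extract the quadratic curvature term, and then observe the same two cancellations (the $[F_{i\alpha},F_{ij}]$ term against the zero-order part of $L$, and the first-order terms via a second application of Bianchi). Your heuristic about spatial translations and the closing remark that $F_\alpha \in L^2_{\boldsymbol{\rho}}$ are pleasant additions not spelled out in the paper, but the computational core is identical.
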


\section{Estimates for rescaled Yang--Mills de-Turck flow} \label{sec:estimates-for-linearized-de-turck-flow}

\subsection{Functional spaces}\label{sec:functional-spaces}

Denote the parabolic cylinder as $Q_r^{\tau_1, \tau_2}(y) \defeq B_r(y) \times [\tau_1, \tau_2]$ and the parabolic annulus as $Q_{r, R}^{\tau_1, \tau_2}(y) \defeq (B_R(y) \setminus B_r(y)) \times [\tau_1, \tau_2]$ for $0 < r < R$ and $\tau_1 < \tau_2$.
When $y = 0$, we omit $y$, and when $\tau_1 = 0$ and $\tau_2 = \infty$ we omit $\tau_1$ and $\tau_2$ for simplicity.
We also denote $Q(y, \tau)$ the unit parabolic cylinder $Q_1^{\tau, \tau + 1}(y)$.

Let $V$ be a Euclidean vector bundle over $\RR^n$. Let $U \subseteq \RR^n \times \RR$ and $f: U \to V$ be a section.
We use the parabolic H\"older norm notation as
\begin{align*}
    [f]_{C^P_{\alpha}; U}      & \defeq \sup \Set{\frac{|f(y_1, \tau_1) - f(y_2, \tau_2)|}{|y_1 - y_2|^\alpha + |\tau_1 - \tau_2|^{\alpha / 2}} \given (y_1, \tau_1), (y_2, \tau_2) \in U, (y_1, \tau_1) \neq (y_2, \tau_2)} \\
    \|f\|_{C^P_{k}; U}         & \defeq \sum_{p + 2q \leq k} \sup_{U} |\nabla_y^p \nabla_\tau^q f|                                                                                                                           \\
    \|f\|_{C^P_{k, \alpha}; U} & \defeq \sum_{p + 2q = k} [\nabla_y^p \nabla_\tau^q f]_{C^P_{\alpha}; U} + \|f\|_{C^P_{k}; U}.
\end{align*}

Let $U' \subseteq \RR^n$, $k \in \NN_0$, $\alpha \in (0, 1)$.
For a section $f: U' \to V$, we use the following notation for the H\"older norm:
\begin{align*}
    \|f\|_{k; U'}         & \defeq \sum_{p \leq k} \sup_{U'} |\nabla^p f|,                                           \\
    [f]_{\alpha; U'}      & \defeq \sup_{y_1, y_2 \in U', y_1 \neq y_2} \frac{|f(y_1) - f(y_2)|}{|y_1 - y_2|^\alpha} \\
    \|f\|_{k, \alpha; U'} & \defeq [\nabla^k f]_{\alpha; U'} + \|f\|_{k; U'}.
\end{align*}

Let $\tilde{r}: \RR^n \to \RR; x \mapsto \zeta(|x|)$ be a scalar function, where $\zeta: [0, \infty] \to [1, \infty]$ is a non-decreasing function with $\zeta(0) = 1$ and $\zeta(r) = r$ for $r \geq 2$ so that $\tilde{r}$ is a smooth function.

We define a weighted H\"older norm $\|f\|_{k, \alpha; U'}^{(d)}$ by
\begin{align*}
    \|f\|_{k; U'}^{(d)}         & \defeq \sum_{p \leq k} \sup_{U'} \tilde{r}^{-d+p} |\nabla^p f|,                                                                                              \\
    [f]_{\alpha; U'}^{(d)}      & \defeq \sup_{y_1, y_2 \in U', y_1 \neq y_2} \frac{1}{\tilde{r}(y_1)^{d - \alpha} + \tilde{r}(y_2)^{d - \alpha}}\frac{|f(y_1) - f(y_2)|}{|y_1 - y_2|^\alpha}, \\
    \|f\|_{k, \alpha; U'}^{(d)} & \defeq [\nabla^k f]^{(d - k)}_{\alpha; U'} + \|f\|_{k; U'}^{(d)}.
\end{align*}
Also, we define another weighted H\"older norm as
\begin{align*}
    \|f\|_{k, \alpha; U'}^{[d]} & \defeq \sup_{y \in \RR^n} \tilde{r}(y)^{-d} \|f\|_{k, \alpha; U' \cap B_1(y)}.
\end{align*}
We omit the domain $U'$ when it is $\RR^n$.

The weighted parabolic Hölder norm with exponential decay is defined as
\begin{align*}
    \|f\|_{k, \alpha; U}^{[d], \delta} & \defeq \sup_{y \in \RR^n, \tau \in \RR} e^{\delta \tau} \tilde{r}(y)^{-d} \|f\|_{C^P_{k, \alpha}; U \cap (Q(y, \tau))}.
\end{align*}
We omit a superscript $[d]$ if $d = 0$, and omit $\delta$ if $\delta = 0$ in the weighted H\"older norm notation.
Note that $\|\cdot\|_{k, \alpha; U}$ and $\|\cdot\|_{C^P_{k, \alpha}; U}$ are equivalent norms.

The following norm will be used to prove the convergence of the flow:
\begin{align*}
    \|u\|_* \defeq \|u\|^{[\gamma + \alpha], \delta}_{2, \alpha} + \|u\|_0^{[-1 + \gamma], \delta} + \|Du\|_0^{[-1 + \gamma], \delta},
\end{align*}
where $0 < \alpha < \gamma < 1/100$ are constants, and $\delta>0$ is a small constant to be determined later.
Here, the operator $D$ denotes the derivative with respect to the spatial variable only.
The last two supnorm terms are included to control the spatial growth of the nonlinear terms in Lemma \ref{lem:nonlinear-estimate}.

\subsection{Weak sense estimates} \label{subsec:weak-sense-estimates}
Throughout this section, we study solutions of the following inhomogeneous linear PDE
\begin{align} \label{eq:inhomogeneous-linear-PDE}
    (\frac{\partial}{\partial \tau} - L)u = h
\end{align}
for $u, h \in \Omega^1(\mf{so}(n)) \times \RR_+$, where $L$ is the linear operator defined in \eqref{eq:de-turck-linearized-operator} and $\RR_+ = [0, \infty)$ in all that follows.
Indeed, we find the solution $u$ with an inhomogeneous term $h = \mc N(u)$ in Section \ref{sec:main-theorem}.

We define the Gaussian-weighed Sobolev spaces
\begin{align*}
    H_{\boldsymbol{\rho}}^k(\RR^n, \Omega^1(\mf{so}(n))) \defeq \Set{u \in \Omega^1(\mf{so}(n)) \given \sum_{p = 0}^{k}\|\nabla^p u\|_{\boldsymbol{\rho}} < \infty},
\end{align*}
which are Hilbert spaces with the inner product and norm
\begin{align*}
    \inn{u, v}_{\boldsymbol{\rho}, k} = \sum_{p = 0}^{k}\inn{\nabla^p u, \nabla^p v}_{\boldsymbol{\rho}}, \quad \|u\|_{\boldsymbol{\rho}, k} = \inn{u, u}_{\boldsymbol{\rho}, k}^{1/2}.
\end{align*}

We can explicitly construct the solution of the inhomogeneous linear PDE \eqref{eq:inhomogeneous-linear-PDE} by the following lemma:
\begin{lemma} \label{lem:existence-unique-sol-parabolic-PDE}
    Fix $\delta > 0$, $0 < \delta' < \min\{\delta, \lambda_{I+1}\}$.
    Suppose that
    \begin{align} \label{eq:integral-assumption-h}
        \int_0^\infty |e^{\delta \tau} \|h(\cdot, \tau)\|_{\boldsymbol{\rho}}|^2 d\tau < \infty.
    \end{align}
    There exists a unique strong $L^2_{\boldsymbol{\rho}}$ solution $u$ of \eqref{eq:inhomogeneous-linear-PDE} such that
    \begin{align}
        \Pi_{>0}(u(\cdot, 0))                                                                                         & = 0,\notag                                                                                \\
        \int_0^\infty |e^{\delta' \tau}(\|u (\cdot, \tau)\|_{\boldsymbol{\rho}, 2} + \|\frac{\partial}{\partial \tau} & u(\cdot, \tau)\|_{\boldsymbol{\rho}})|^2 d\tau  < \infty. \label{eq:ddu-dudt-L2-estimate}
    \end{align}
    The solution is explicitly given by
    \begin{align*}
        u(y, \tau) = \sum_{j = 1}^\infty u^j(\tau) \xi_j(y),
    \end{align*}
    where
    \begin{align}
        h(y, \tau) & \eqdef \sum_{j = 1}^\infty h^j(\tau) \xi_j(y), \notag                                                                     \\
        u^j(\tau)  & \defeq \begin{cases}
                                \int_0^\tau e^{\lambda_j (\sigma - \tau)}h^j(\sigma)d\sigma,      & j \geq I+1,                                     \\
                                -\int_\tau^\infty e^{\lambda_j(\sigma - \tau)}h^j(\sigma)d\sigma, & j = 1, 2, \cdots, I. \label{eq:u-j-for-j-leq-I}
                            \end{cases}
    \end{align}
    Moreover, for all $\tau \in \RR_+$,
    \begin{align}
        e^{\delta' \tau}\|u(\cdot, \tau) \|_{\boldsymbol{\rho}} \leq C \left[\int_0^\infty |e^{\delta \sigma}\|h(\cdot, \sigma)\|_{\boldsymbol{\rho}}|^2 d\sigma\right]^{1/2} \label{eq:u-L2-decay}
    \end{align}
    where $C = C(\delta, \delta', \lambda_{I+1})$.
    \begin{proof}
        First, we show that $u$ is the unique strong solution satisfying \eqref{eq:ddu-dudt-L2-estimate} with the initial condition $u(0) = \sum_{j = 1}^{\infty} u^j(0)\xi_j$.
        H\"older inequality and \eqref{eq:integral-assumption-h} imply
        \begin{align}
            \lambda_j u^j(\tau)^2 & \leq \lambda_j \left(\int_0^\tau e^{\lambda_j(\sigma-\tau)}h^j(\sigma)d\sigma\right)^2                   \notag                                                                   \\
                                  & \leq \lambda_j e^{-2\lambda_j \tau}\left(\int_0^\tau e^{2\delta' \sigma} h^j(\sigma)^2d\sigma\right)^2 \left(\int_0^\tau e^{2(\lambda_j - \delta')\sigma} d\sigma\right)^2 \notag \\
                                  & \leq C(\delta', \lambda_{I+1}) e^{-2\delta' \tau}\left(\int_0^\infty e^{2\delta'\sigma}h^j(\sigma)^2\right) \label{eq:labmda-u-u-decay-positive}
        \end{align}
        for $j \geq I+1$ and $\tau \in \RR_+$.
        Similarly,
        \begin{align}
            u^j(\tau)^2 & \leq \left(\int_\tau^\infty e^{\lambda_j(\sigma-\tau)}h^j(\sigma)d\sigma\right)^2   \notag                                                                                         \\
                        & \leq e^{-2\lambda_j \tau}\left(\int_\tau^\infty e^{2\delta' \sigma} h^j(\sigma)^2 d\sigma\right)^2 \left(\int_\tau^\infty e^{2(\lambda_j - \delta')\sigma} d\sigma\right)^2 \notag \\
                        & \leq C(\delta') e^{-2\delta' \tau}\left(\int_0^\infty e^{2\delta'\sigma}h^j(\sigma)^2\right) \label{eq:labmda-u-u-decay-negative}
        \end{align}
        for $j \leq I$ and $\tau \in \RR_+$.
        Combining \eqref{eq:labmda-u-u-decay-positive}, \eqref{eq:labmda-u-u-decay-negative} and \eqref{eq:integral-assumption-h}, we deduce for all $\tau \in \RR_+$,
        \begin{align*}
            \|u(\cdot, \tau)\|_{\boldsymbol{\rho}} & \leq [\sum_{j = 1}^{\infty} u^j(\tau)^2]^{1/2} \leq C(\delta', \lambda_{I+1})e^{-\delta'\tau} \left[ \int_0^\infty e^{2\delta' \sigma}\|h(\cdot, \sigma)\|_{\boldsymbol{\rho}}^2 d\sigma \right]^{1/2} \\
                                                   & \leq C(\delta', \lambda_{I+1}) e^{-\delta' \tau}\left[\int_0^\infty|e^{\delta\sigma}\|h(\cdot, \sigma)\||^2\right]^{1/2},
        \end{align*}
        and get the inequality \eqref{eq:u-L2-decay}.
        Since $\|u(\cdot, 0)\| < \infty$, we deduce that $u = \sum_{j = 1}^{\infty} u^j \xi_j$ is a unique weak $L_{\boldsymbol{\rho}}^2$ solution of \eqref{eq:inhomogeneous-linear-PDE} with an initial condition $u(\cdot, 0) = \sum_{j=1}^{\infty} u^j(0)\xi_i$ by adaptation of Galerkin's method (\cite[\S 7.1.2, Theorems 3, 4]{Evans}).
        Combining \eqref{eq:labmda-u-u-decay-negative} and \eqref{eq:labmda-u-u-decay-positive} gives
        \begin{align} \label{eq:lambda-uu-decay}
            \sum_{j = 1}^{\infty}\lambda_j u^j(\tau)^2 \leq C(\delta', \lambda_{I+1}) e^{-2\delta' \tau}.
        \end{align}
        We introduce simple inequality
        \begin{claim}
            Let $w \defeq \sum_{j = 1}^{\infty} w^j \xi_j \in L^2_{\boldsymbol{\rho}}(\Omega^1(\mf{so}(n)))$.
            Then,
            \begin{align*}
                \sum_{j=1}^\infty \lambda_j (w^j)^2 < \infty \iff w \in H_{\boldsymbol{\rho}}^1(\Omega^1(\mf{so}(n))).
            \end{align*}
            Furthermore,
            \begin{align*}
                \frac{1}{2}\|w\|_{\boldsymbol{\rho}, 1}^2 - C\|w\|_{\boldsymbol{\rho}}^2 \leq \sum_{j = 1}^{\infty} \lambda_j (w^j)^2 \leq 2\|w\|_{\boldsymbol{\rho}, 1} + C\|w\|_{\boldsymbol{\rho}}.
            \end{align*}
            \begin{proof}
                By H\"older inequality and \eqref{eq:de-turck-linearized-operator}, we have
                \begin{align*}
                    \sum_{j = 1}^{\infty} \lambda_j (u^j(\tau))^2 & = \| \nabla u \|^2_{\boldsymbol{\rho}} + \langle u_j, \frac{1}{2}u_j - 2[W_i, \partial_i u_j] + [W_i, [W_i, u_j]] + 2[u_i, {F_W}_{ij}] \rangle_{\boldsymbol{\rho}} \\
                                                                  & \geq \frac{1}{2}\|\nabla u\|^2_{\boldsymbol{\rho}, 1} - C\|u\|^2_{\boldsymbol{\rho}}.
                \end{align*}
                The upper bound can be derived similarly.
            \end{proof}
        \end{claim}
        Consequently, \eqref{eq:lambda-uu-decay} and the claim imply
        \begin{align*}
            \|u(\cdot, \tau)\|_{\boldsymbol{\rho}, 1} \leq C(\delta', \lambda_{I+1}) e^{-\delta' \tau}.
        \end{align*}
        Also, we have
        \begin{align*}
            \|\frac{\partial u}{\partial \tau}\|_{\boldsymbol{\rho}, -1} \leq C e^{-\delta'\tau}
        \end{align*}
        for a.e. $\tau \in \RR_+$, since
        \begin{align*}
            \langle \frac{\partial u}{\partial \tau}(\tau), v \rangle_{\boldsymbol{\rho}} & = \langle \sum_{j = 1}^{\infty}(h^j - \lambda_j u^j)(\tau)\xi_j, \sum_{j=1}^{\infty} v^j \xi_j\rangle_{\boldsymbol{\rho}} = \sum_{j = 1}^\infty h^j(\tau)v^j -\lambda_j u^j(\tau)v^j   \\
                                                                                          & \leq \|h\|_{\boldsymbol{\rho}} \|v\|_{\boldsymbol{\rho}}+ \left(\sum_{j = 1}^{\infty} |\lambda_j| u^j(\tau)^2\right)^{1/2} \left(\sum_{j = 1}^{\infty} |\lambda_j|(v^j)^2\right)^{1/2} \\
                                                                                          & \leq \|h\|_{\boldsymbol{\rho}} + C\|u\|_{\boldsymbol{\rho}, 1}\|v\|_{\boldsymbol{\rho}, 1} \leq C e^{-\delta' \tau}.
        \end{align*}
        for any $v = \sum_{i = 1}^\infty v^i \xi_i \in H_{\boldsymbol{\rho}}^1(\Omega^1(\mf{so}(n)))$ by H\"older inequality and the claim.
        Indeed, $u$ is a strong solution with the following estimate (\cite[\S 7.1.3, Theorem 5, Eq (46)]{Evans})
        \begin{align*}
            \int_{\tau}^{\tau+1}(\|u(\cdot, \tau)\|_{\boldsymbol{\rho}, 2} + \|\frac{\partial u}{\partial \tau}(\cdot, \tau)\|_{\boldsymbol{\rho}})^2 d \sigma \leq C (\int_{\tau}^{\tau + 1}\|h\|^2_{\boldsymbol{\rho}} d\sigma + \|u(\cdot, \tau)\|^2_{\boldsymbol{\rho}, 1})
        \end{align*}
        for a.e. $\tau \in \RR_+$, which implies \eqref{eq:ddu-dudt-L2-estimate}.
        In the similar way, for any $\mathbf{a} \in \RR^{I}$, we can show that $u[\mathbf{a}] = \sum_{j=1}^{\infty}u[\mathbf{a}]^j \xi_j$ is a unique solution of \eqref{eq:inhomogeneous-linear-PDE} with the initial condition $(u[\mathbf{a}])(\cdot, 0) = \sum_{j = 1}^{I} a^j\xi_j$, where
        \begin{align*}
            u[\mathbf{a}]^j(\tau) = \begin{cases}
                                        u^j(\tau) - u^j(0) + \mathbf{a}^j, & j \geq I+1,          \\
                                        u^j(\tau) d\sigma,                 & j = 1, 2, \cdots, I.
                                    \end{cases}
        \end{align*}
        Hence, a condition $\Pi_{>0}(u(\cdot, 0)) = 0$ and an inequality \eqref{eq:ddu-dudt-L2-estimate} determine $\mathbf{a}$ to be $(u^1(0), \cdots, u^{I}(0))$ and this completes the proof of uniqueness part.
    \end{proof}
\end{lemma}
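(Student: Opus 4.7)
The plan is to reduce the PDE to an infinite system of scalar ODEs via the spectral decomposition of $-L$ furnished by Lemma~\ref{lem:spectral-properties}. Writing $u(y,\tau)=\sum_j u^j(\tau)\xi_j(y)$ and $h(y,\tau)=\sum_j h^j(\tau)\xi_j(y)$ and projecting $(\partial_\tau-L)u=h$ onto $\xi_j$ decouples the system into
\begin{equation*}
\dot u^j(\tau)+\lambda_j u^j(\tau)=h^j(\tau),
\end{equation*}
whose general solution is $u^j(\tau)=u^j(0)e^{-\lambda_j\tau}+\int_0^\tau e^{\lambda_j(\sigma-\tau)}h^j(\sigma)\,d\sigma$. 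All that remains is to pick the free parameters $u^j(0)$ correctly.

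The constraint $\Pi_{>0}(u(\cdot,0))=0$ directly forces $u^j(0)=0$ for $j\geq I+1$, yielding the forward Duhamel formula in \eqref{eq:u-j-for-j-leq-I}. For $j\leq I$ the initial value is not pinned down by this projection, and because $\lambda_j\leq 0$ the homogeneous mode $e^{-\lambda_j\tau}$ does not decay; the decay requirement in \eqref{eq:ddu-dudt-L2-estimate} therefore singles out the unique choice $u^j(0)=-\int_0^\infty e^{\lambda_j\sigma}h^j(\sigma)\,d\sigma$ and produces the backward-in-time formula. I would then verify \eqref{eq:u-L2-decay} mode by mode by applying Cauchy--Schwarz to each Duhamel integral and exploiting the gap $\lambda_j\geq\lambda_{I+1}>\delta'$ for $j\geq I+1$, or $\lambda_j\leq 0<\delta'$ for $j\leq I$. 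In both cases each coefficient satisfies a bound of the form $|u^j(\tau)|^2\leq C\, e^{-2\delta'\tau}\int_0^\infty e^{2\delta\sigma}|h^j(\sigma)|^2\,d\sigma$ with $C$ independent of $j$, so Parseval's identity in $L^2_{\boldsymbol{\rho}}$ summed over $j$ yields \eqref{eq:u-L2-decay}.

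To promote this to the $H^2_{\boldsymbol{\rho}}$ and $\partial_\tau u$ control in \eqref{eq:ddu-dudt-L2-estimate}, I would first prove a short auxiliary lemma showing that $\sum_j\lambda_j(w^j)^2$ is comparable to $\|w\|_{\boldsymbol{\rho},1}^2$ modulo a lower-order $\|w\|_{\boldsymbol{\rho}}^2$ term; this reduces to testing the quadratic form of $-L$ from \eqref{eq:de-turck-linearized-operator} against $w$ and using the $L^\infty$ bounds on $W$ and $F_W$ to absorb the cross terms into the Dirichlet part. Combined with the mode-by-mode exponential decay, this yields the same-rate decay of $\|u(\cdot,\tau)\|_{\boldsymbol{\rho},1}$, and the equation then provides $\partial_\tau u$ in $H^{-1}_{\boldsymbol{\rho}}$ with the same factor. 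The top-order estimate on $\|u\|_{\boldsymbol{\rho},2}$ and $\|\partial_\tau u\|_{\boldsymbol{\rho}}$ on each strip $[\tau,\tau+1]$ is then obtained by an abstract parabolic regularity estimate in the style of the Galerkin scheme in \cite[\S7.1]{Evans}, with the right-hand side controlled by $\int_\tau^{\tau+1}\|h(\cdot,\sigma)\|_{\boldsymbol{\rho}}^2\,d\sigma$ and $\|u(\cdot,\tau)\|_{\boldsymbol{\rho},1}^2$, both already exponentially small.

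For uniqueness, I would subtract two putative solutions to obtain a homogeneous solution $v$ of the stated type; its spectral coefficients solve $\dot v^j=-\lambda_j v^j$ and are therefore of the form $v^j(0)e^{-\lambda_j\tau}$, so $\Pi_{>0}v(\cdot,0)=0$ kills the positive modes, while the $e^{\delta'\tau}$-integrability of $\|v(\cdot,\tau)\|_{\boldsymbol{\rho}}$ together with $\lambda_j\leq 0$ forces $v^j(0)=0$ for $j\leq I$ as well. The main subtlety I expect is not the spectral bookkeeping but the passage from the formal eigenfunction expansion to a genuine strong solution with the claimed higher-regularity bounds: the coefficients of $L$ are unbounded in $y$, so standard parabolic regularity theorems do not apply off the shelf, and one must verify the Galerkin energy identities directly in the Gaussian-weighted Sobolev scale before summing the per-mode estimates into the claimed $L^2_{\boldsymbol{\rho}}$-in-time $H^2_{\boldsymbol{\rho}}$-in-space bound.
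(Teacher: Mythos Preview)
Your proposal is correct and follows essentially the same route as the paper: spectral decomposition into the scalar ODEs $\dot u^j+\lambda_j u^j=h^j$, the forward/backward Duhamel split according to the sign of $\lambda_j$, mode-by-mode Cauchy--Schwarz estimates summed via Parseval to get \eqref{eq:u-L2-decay}, the auxiliary comparison of $\sum_j\lambda_j(w^j)^2$ with $\|w\|_{\boldsymbol{\rho},1}^2$ (which appears in the paper as a Claim), and the appeal to the Galerkin-type regularity estimate from \cite[\S7.1]{Evans} for the $H^2_{\boldsymbol{\rho}}$ and $\partial_\tau u$ bounds. Your caveat about the unbounded coefficients of $L$ is well taken, but the paper simply invokes Evans' results as an adaptation without further justification, so you are not missing any additional idea.
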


We introduce a variant of Kato's inequality for the Frobenius norm.
\begin{lemma}[Kato's inequality for the Frobenius norm]\label{lem:Kato-inequality}
    Let $u \in H^2_{\boldsymbol{\rho}}(\Omega^1(\mf{so}(n)))$.
    Then, the following holds
    \begin{align} \label{eq:Kato-inequality}
        \Delta \|u\|_F \geq \left(\frac{\langle u, \Delta u\rangle_F}{\|u\|_F} + \frac{1}{n} \frac{\sum_{i,j}\|[\partial_i u_j, u_j]\|_F^2}{\|u\|_F^3}\right) \chi_{u \neq 0},
    \end{align}
    in the weak $L^2_{\boldsymbol{\rho}}$ sense.
    \begin{proof}[Proof of Lemma \ref{lem:Kato-inequality}]
        First, we show the simple inequality
        \begin{align} \label{eq:final-matrix-inequality}
            \sum_i \langle \partial_i u, \partial_i u \rangle_F \|u\|_F^2 - \langle \partial_i u, u\rangle_F^2 \geq \frac{1}{n}\sum_{i,j} \|[\partial_i u_j, u_j]\|_F^2.
        \end{align}
        The following matrix inequality for $A, B \in \text{Mat}_n(\RR)$
        \begin{align}
            \|[A, B]\|_F^2 & = \sum_{\mu, \nu}(\sum_\eta A^\mu_\eta B^\eta_\nu - B^\mu_\eta A^\eta_\nu)^2 \leq n\sum_{\mu, \nu, \eta} (A^\mu_\eta B^\eta_\nu - B^\mu_\eta A^\eta_\nu)^2 \notag \\
                           & \leq n\sum_{\mu, \nu, \eta, \xi} (A^\mu_\eta B^\xi_\nu - B^\mu_\eta A^\xi_\nu)^2 = n(\|A\|_F^2\|B\|_F^2 - \langle A, B\rangle_F^2), \label{eq:matrix-inequality}
        \end{align}
        implies
        \begin{align*}
            \|u_j\|_F^2 \|\partial_i u_j\|^2_F - \langle u_j, \partial_i u_j\rangle_F^2 \geq \frac{1}{n} \|[\partial_i u_j, u_j]\|_F^2.
        \end{align*}
        Note also
        \begin{align} \label{eq:matrix-inequality-2}
            \|u_j\|^2_F \|\partial_i u_k\|^2_F + \|u_k\|_F^2\|\partial_i u_j\|_F^2 \geq 2 \langle u_j, \partial_i u_j \rangle_F \langle u_k, \partial_i u_k\rangle_F
        \end{align}
        by the AM-GM and Cauchy-Schwarz inequality.
        Combining \eqref{eq:matrix-inequality} and \eqref{eq:matrix-inequality-2} gives
        \begin{align*}
            (\sum_j \|u_j\|_F^2)(\sum_j \|\partial_i u_j\|_F^2) - (\sum_j\langle u_j, \partial_i u_j\rangle_F)^2 \geq \frac{1}{n}\sum_j \|[\partial_i u_j, u_j]\|_F^2
        \end{align*}
        or equivalently \eqref{eq:final-matrix-inequality}.
        Let $u_\epsilon = \sqrt{\|u\|_F^2 + \epsilon^2} \in H^1_{\boldsymbol{\rho}}(\Omega^1(\mf{so}(n)))$.
        Let $\phi \in C^\infty_c(\RR^n)$ be a non-negative test function.
        Integration by parts and \eqref{eq:final-matrix-inequality} gives
        \begin{align}
             & \int (\nabla u_\epsilon \cdot \nabla \phi - \frac{1}{2}(y \cdot \nabla u_\epsilon) \phi) \boldsymbol{\rho}                                                                                                                                                                             \notag  \\
             & = \int \sum_i \left(\langle \partial_i u, \partial_i(\frac{u}{u_\epsilon}\phi) - \frac{y^i}{2}(\frac{u}{u_\epsilon}\phi) \rangle_F - \frac{\langle \partial_i u, \partial_i u \rangle_F u_\epsilon^2 - \langle \partial_i u, u \rangle_F^2}{u_\epsilon^3} \phi\right) \boldsymbol{\rho} \notag \\
             & \leq \int (\langle \Delta u, \frac{u}{u_\epsilon}\phi\rangle_F - \frac{1}{n} \frac{\sum_{i,j} \|[\partial_i u_j, u_j]\|_F^2}{u_\epsilon^3} \phi) \boldsymbol{\rho}. \label{eq:weak-Kato-inequality}
        \end{align}
        Since $u_\epsilon \to u$ in $H^1_{\boldsymbol{\rho}}$ and $\frac{u}{u_\epsilon} \phi \to \chi_{u \neq 0}\phi$ in $L^2_{\boldsymbol{\rho}}$ as $\epsilon \to 0$, all terms in the inequality \eqref{eq:weak-Kato-inequality} converges except for the last term.
        Indeed, monotonicity with respect to $\epsilon$ gives the convergence of the last term.
        Taking $\epsilon \to 0$ gives the desired inequality \eqref{eq:Kato-inequality}.
    \end{proof}
\end{lemma}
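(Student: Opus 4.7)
The plan is to regularize by introducing $u_\epsilon = \sqrt{\|u\|_F^2 + \epsilon^2}$, which is smooth and bounded away from zero, derive a pointwise inequality with an explicit remainder term, and pass to the limit $\epsilon \to 0$ in a weak formulation. First I would compute directly from the chain rule
\begin{align*}
    \partial_i u_\epsilon = \frac{\langle u, \partial_i u\rangle_F}{u_\epsilon}, \qquad \Delta u_\epsilon = \frac{\langle u, \Delta u\rangle_F + |\nabla u|_F^2}{u_\epsilon} - \frac{\sum_i \langle u, \partial_i u\rangle_F^2}{u_\epsilon^3}.
\end{align*}
The standard Kato inequality would simply discard the gap $|\nabla u|_F^2 u_\epsilon^2 - \sum_i \langle u, \partial_i u\rangle_F^2 \geq 0$ using Cauchy--Schwarz. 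The refinement we need instead quantifies this gap in terms of commutators, so I would keep it and try to lower bound it by $\tfrac{1}{n}\sum_{i,j}\|[\partial_i u_j, u_j]\|_F^2/u_\epsilon^2$.

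The heart of the argument is therefore a purely algebraic matrix inequality: for $A, B \in \mathrm{Mat}_n(\mathbb{R})$,
\begin{align*}
    \|[A, B]\|_F^2 \leq n \bigl(\|A\|_F^2 \|B\|_F^2 - \langle A, B\rangle_F^2\bigr).
\end{align*}
I would prove this by expanding $\|[A,B]\|_F^2 = \sum_{\mu\nu}\bigl(\sum_\eta A^\mu_\eta B^\eta_\nu - B^\mu_\eta A^\eta_\nu\bigr)^2$, using Cauchy--Schwarz in the summation over $\eta$ to pull out the factor $n$, and then enlarging the sum by introducing an extra dummy index $\xi$ so that the resulting expression becomes exactly the identity expansion of $\|A\|_F^2\|B\|_F^2 - \langle A, B\rangle_F^2$. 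Applying this with $A = \partial_i u_j$, $B = u_j$ for each $(i,j)$ gives the desired bound term by term; a short AM-GM combines the cross terms $\langle u_j, \partial_i u_j\rangle_F \langle u_k, \partial_i u_k\rangle_F$ so the inequality survives after summing over $j$.

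The last step is the passage to the weak limit. I would test against an arbitrary nonnegative $\phi \in C^\infty_c(\mathbb{R}^n)$, integrate the pointwise inequality against the Gaussian weight $\boldsymbol{\rho}$, and use integration by parts on $\int \Delta u_\epsilon \cdot \phi\, \boldsymbol{\rho}$ to distribute derivatives onto $\phi$ and $\boldsymbol{\rho}$ (producing the $y \cdot \nabla u_\epsilon$ drift). As $\epsilon \to 0$, the fields $u_\epsilon \to \|u\|_F$ in $H^1_{\boldsymbol{\rho}}$ and $u/u_\epsilon \to (u/\|u\|_F)\chi_{u\neq 0}$ in $L^2_{\boldsymbol{\rho}}$, handling the first-order and ambient Laplacian terms. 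The main obstacle will be the commutator term $\|[\partial_i u_j, u_j]\|_F^2 / u_\epsilon^3$, which is singular where $u$ vanishes and has no obvious strong convergence; however, its integrand is \emph{monotone increasing} as $\epsilon$ decreases, so monotone convergence gives a lower-semicontinuous limit that is precisely the stated right-hand side. Putting the three steps together yields the weak inequality on $L^2_{\boldsymbol{\rho}}$.
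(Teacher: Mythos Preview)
Your proposal is correct and follows essentially the same route as the paper: the same regularization $u_\epsilon=\sqrt{\|u\|_F^2+\epsilon^2}$, the same matrix inequality $\|[A,B]\|_F^2\le n(\|A\|_F^2\|B\|_F^2-\langle A,B\rangle_F^2)$ proved via Cauchy--Schwarz in $\eta$ and insertion of a dummy index, the same AM--GM handling of the cross terms in $j$, and the same passage to the limit using $H^1_{\boldsymbol\rho}$ convergence of $u_\epsilon$ together with monotonicity in $\epsilon$ for the commutator term. The only cosmetic difference is that you write the pointwise identity for $\Delta u_\epsilon$ first and then integrate, whereas the paper works directly in the weak formulation throughout.
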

For simplicity, we omit $\chi_{u \neq 0}$ in the following.
We establish a sup-norm estimate for the solution of the inhomogeneous linear PDE \eqref{eq:inhomogeneous-linear-PDE} in the following lemma.

\begin{lemma} \label{lem:supnorm-estimate}
    Suppose $u, h$ satisfies \eqref{eq:inhomogeneous-linear-PDE} and
    \begin{align*}
        \int_{\tau}^{\tau+1} \left(\|u(\cdot, \sigma)\|_{\boldsymbol{\rho}, 2}^2 + \|\frac{\partial}{\partial \tau} u(\cdot, \sigma)\|_{\boldsymbol{\rho}}^2\right) d\sigma < \infty
    \end{align*}
    for all $\tau \in \RR_+$.
    Then, the following estimates hold.
    \begin{enumerate} [label=(\alph*)]
        \item \label{item:exp-decay-supnorm-estimate}
              Let $d > -1$ and $0 < \delta < \frac{1+d}{2}$.
              For all $\tau \in \RR_+$, and $R > R_0(n, d, \delta)$,
              \begin{align*}
                  \|u\|_{0}^{[d], \delta} \leq C(n, d, \delta) \left(\|u\|_{0 ; Q_R}^{[d],\delta} + \|h\|_0^{[d], \delta} + \|u(\cdot, 0)\|^{[d]}_{0}\right).
              \end{align*}
        \item \label{item:-1-supnorm-estimate}
              Let $d' < -1$.
              For all $\tau \in \RR_+$, and $R > R_0(n, d')$,
              \begin{align}
                  \|u\|_{0}^{[-1]} \leq C(n, d') \left(\|u\|_{0 ; Q_R}^{[-1]} + \|h\|_{0}^{[d']} + \|u(\cdot, 0)\|^{[-1]}_{0}\right) \label{eq:-1-supnorm-estimate}.
              \end{align}
    \end{enumerate}
    \begin{proof}
        We follow the proof of \cite[Lemma 3.15]{CHODOSH}, but use Lemma \ref{lem:Kato-inequality} instead of the standard Kato's inequality.
        Since the inequality \eqref{eq:-1-supnorm-estimate} holds for a particular value $d' = d_0'$, it must also hold for all $d'$ less than $d_0'$.
        Therefore, we can assume that $d' \in (-3, -1)$.
        Hence, we assume that $-3 < d' < -1$.
        Let $\langle \cdot, \cdot \rangle$ be an inner product of $\mf{so}(n)$-valued tensors, defined as follows:
        \begin{align*}
            \langle T, V \rangle_F \defeq \sum_{\alpha \in I} \text{tr}(T_{\alpha}^t V_{\alpha}),
        \end{align*}
        where $I$ is the index set of $T, V$, $T^t$ is a transpose of $T$ and $\text{tr}$ is a trace of a matrix.
        We also denote the corresponding norm as $\|T\|_F = \inn{T, T}_F^{1/2}$.
        We aim to bound $\|u\|_F^2$ on $\RR^n \setminus B_R$ using a barrier function with a primary decay $\sim|y|^{d}$.

        Using Lemma \ref{lem:Kato-inequality} and \eqref{eq:de-turck-linearized-operator}, we have
        \begin{align}
             & (\cfrac{\partial}{\partial \tau} - (\Delta - \frac{1}{2} y \cdot \nabla - \frac{1}{2}))\|u\|_F                                                                                                                  \notag \\
             & \leq \frac{\langle u, (\partial_\tau - (\Delta - \frac{1}{2} y \cdot \nabla - \frac{1}{2}))u\rangle_F}{\|u\|_F} - \frac{\frac{1}{n}\sum_{i,j}\|[\partial_i u_j, u_j]\|_F^2}{\|u\|_F^3}                          \notag \\
             & \leq \frac{\sum_{i,j} \left(\langle u_j, h_j + 2[W_i, \partial_i u_j] + [W_i, [W_i, u_j]] + 2[u_i, {F_W}_{ij}]\rangle_F\right)}{\|u\|_F} - \frac{\frac{1}{n}\sum_{i,j}\|[\partial_i u_j, u_j]\|_F^2}{\|u\|_F^3} \notag \\
             & \leq \frac{\langle u, h + \frac{C}{\tilde{r}^2}u\rangle_F}{\|u\|_F} +  \sum_{i, j}(\frac{2\langle W_i, [\partial_i u_j, u_j] \rangle_F}{\|u\|_F} - \frac{\frac{1}{n}\|[\partial_i u_j, u_j]\|_F^2}{\|u\|_F^3})  \notag \\
             & \leq \frac{\langle u, h + \frac{C}{\tilde{r}^2}u \rangle_F}{\|u\|_F} + \|u\|_F\frac{C}{\tilde{r}^2} \leq \|h\|_F + \frac{C_0}{\tilde{r}^2} \|u\|_F\label{eq:dt-L-estimate}
        \end{align}
        in the weak sense for some $C_0(n) > 0$.
        By defining the linear operator
        \begin{align*}
            L' \defeq \Delta - \frac{1}{2}y \cdot \nabla - \frac{1}{2} + \frac{C_0}{r^2}
        \end{align*}
        on $\RR^n \setminus B_R$, we obtain
        \begin{align*}
            (\frac{\partial}{\partial \tau} - L') \|u\|_F \leq \|h\|_F.
        \end{align*}
        in the weak sense.
        Let $f = \|u\|_F - \psi$, where $\psi$ is a barrier function defined differently for each case (a) and (b).
        We will show the following estimates for each case, assuming $R$ is sufficiently large:
        \begin{align*}
            f                                      & < 0 \text{ on } (\partial B_R \times \RR_+) \cup ((\RR^n\setminus B_R) \times \Set{0}) \\
            (\frac{\partial}{\partial \tau} - L')f & \leq 0 \text{ in } Q_{R, \infty}.
        \end{align*}
        \begin{enumerate} [label=(\alph*)]
            \item Define the barrier function $\psi$ by
                  \begin{align*}
                      \psi(y, \tau) = \alpha r^{d}e^{-\delta \tau}
                  \end{align*}
                  with
                  \begin{align*}
                      \alpha & = 2 \left(\|u(\cdot, 0)\|^{[d]}_0 + \sup_{\partial B_R \times \RR_+} \left[e^{\delta \tau} \|r^{-d} u\|_F\right]\right) + \frac{4}{d+1 - 2\delta} \|h\|_{0}^{[d], \delta}.
                  \end{align*}
                  If $\alpha = 0$, then \eqref{eq:-1-supnorm-estimate} directly holds.
                  Therefore, we can assume $\phi>0$ \ie $\alpha > 0$.
                  This assumption gives strict inequality
                  \begin{align}
                      f & = \|u\|_F - \psi = \|u\|_F - \alpha r^{d}e^{-\delta \tau}                                                                                    \notag \\
                        & \leq \|u\|_F - 2r^d e^{-\delta \tau}(\|u(\cdot, 0)\|^{(d)}_0 + \sup_{\partial B_R \times \RR_+} \left[e^{\delta \tau} \|r^{-d} u\|_F\right]) \notag \\
                        & < 0 \text{ on } (\partial B_R \times \RR_+) \cup ((\RR^n \setminus B_R) \times \Set{0}).\label{eq:barrier-boundary-inequality}
                  \end{align}
                  On the other hand, for sufficiently large $R > R(n, d, \delta)$, we have
                  \begin{align}
                      (\frac{\partial}{\partial \tau} - L')f & \leq \|h\|_F - (\frac{\partial}{\partial \tau} - L')\psi                                                                                  \notag \\
                                                             & = \|h\|_F - \alpha ((\frac{1 + d}{2} - \delta)r^{d} - (C_0 + d(n + d - 2))r^{d-2})e^{-\delta \tau}                                        \notag \\
                                                             & \leq \|h\|_F - \frac{1}{2}\alpha(\frac{1 + d}{2} - \delta) r^d e^{-\delta \tau} \leq \|h\|_F - r^d e^{-\delta \tau} \|h\|_0^{[d], \delta} \notag \\
                                                             & \leq 0 \text{ in } Q_{R, \infty}.\label{eq:barrier-L'-inequality}
                  \end{align}
            \item Define the barrier function $\psi$ by
                  \begin{align*}
                      \psi(y, \tau) = \alpha'(\frac{1}{r} - r^{d'})
                  \end{align*}
                  with
                  \begin{align*}
                      \alpha' & = 2 \left(\|u(\cdot, 0)\|^{(-1)}_0 + \sup_{\partial B_R \times \RR_+} \left[\|ru\|_F\right]\right) + \frac{4}{-1-d'} \|h\|_0^{[d']}.
                  \end{align*}
                  We assume that $\alpha' \neq 0$.
                  For $(y, \tau) \in (\partial B_R \times \RR_+) \cup ((\RR^n \setminus B_R) \times \Set{0})$, we have
                  \begin{align*}
                      f(y, \tau) & = \|u\|_F - \alpha'(\frac{1}{r} - r^{d'}) \leq \|u\|_F - \frac{\alpha'}{2r}                                          \\
                                 & \leq \|u\|_F - \frac{1}{r}(\|u(\cdot, 0)\|^{(-1)}_0 + \sup_{\partial B_R \times \RR_+} \left[\|ru\|_F\right]) \leq 0
                  \end{align*}
                  for sufficiently large $R>R(d')$.
                  By an assumption $-3 < d' <-1$, we obtain
                  \begin{align*}
                      (\frac{\partial}{\partial \tau} - L')f & \leq \|h\|_F - (\frac{\partial}{\partial \tau} - L')\psi                                   \\
                                                             & = \|h\|_F + \alpha' (C_0 -(n - 3))r^{-3}                                                   \\
                                                             & - \alpha' ((\frac{-1 - d'}{2}r^{d'} + (C_0 + d'(n + d' - 2)))r^{d'-2})                     \\
                                                             & \leq \|h\|_F - \frac{-1-d'}{4}\alpha' r^{d'} \leq \|h\|_F - r^{d'} \|h\|_0^{[d']} \leq 0 .
                  \end{align*}
                  in $Q_{R, \infty}$ for sufficiently large $R > R(n, d')$.
        \end{enumerate}

        Define a function $f_+ \defeq \max\{f, 0\} \in L^2(\tau, \tau + 1; H_{\boldsymbol{\rho}}^1(\RR^n \setminus B_R))$ that vanishes in some neighborhood of the boundary due to \eqref{eq:barrier-boundary-inequality}.
        Since $\frac{\partial f_+}{\partial \tau} = \frac{\partial u}{\partial \tau}\chi_{f>0} \in L^2(\tau, \tau+1; L^2_{\boldsymbol{\rho}}(\RR^n \setminus B_R))$, \cite[\S 5.9.2, Theorem 3]{Evans} applied to $f_+$ gives absolute continuity of $\|f_+(\cdot, \tau)\|_{\boldsymbol{\rho}; \RR^n \setminus B_R}$ with
        \begin{align}
            \frac{1}{2}\frac{\partial}{\partial \tau} \|f_+\|^2_{\boldsymbol{\rho}; \RR^n \setminus B_R} = \langle \frac{\partial}{\partial \tau} f_+, f_+\rangle_{\boldsymbol{\rho}; \RR^n \setminus B_R} = \langle \frac{\partial}{\partial \tau}f, f_+ \rangle_{\boldsymbol{\rho}; \RR^n \setminus B_R} \label{eq:barrier-tau-derivative-inequality}
        \end{align}
        for a.e. $\tau \in \RR_+$.
        Multiply \eqref{eq:barrier-L'-inequality} by $f_+$ and integrate over $\RR^n \setminus B_R$ yields
        \begin{align}
            \frac{1}{2}\frac{\partial}{\partial \tau} \|f_+\|^2_{\boldsymbol{\rho}; \RR^n \setminus B_R} & \leq \int_{\RR^n \setminus B_R} f_+ (\Delta - \frac{1}{2}y \cdot \nabla - \frac{1}{2} + \frac{C_0}{r^2})f \boldsymbol{\rho}                                                                                       \notag \\
                                                                                                         & = \int_{\RR^n \setminus B_R} -|\nabla f_+|^2 \boldsymbol{\rho} - (\frac{1}{2} - \frac{C_0}{R^2})\|f_+\|_{\boldsymbol{\rho}; \RR^n \setminus B_R}^2 \leq 0 \label{eq:barrier-tau-derivative-inequality-integrated}
        \end{align}
        by \eqref{eq:barrier-tau-derivative-inequality} and integration by parts for a.e. $\tau \in \RR_+$ and sufficiently large $R$.
        Since \eqref{eq:barrier-boundary-inequality} implies $\|f_+(\cdot, 0)\|_{\boldsymbol{\rho}; \RR^n \setminus B_R} = 0$, we conclude that $f_+ \equiv 0$ in $Q_{R, \infty}$.
    \end{proof}
\end{lemma}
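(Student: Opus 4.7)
The plan is to reduce the tensor-valued problem to a scalar problem for the pointwise Frobenius norm $\|u\|_F$, and then to run a parabolic maximum principle argument using an explicit barrier on the exterior domain $\RR^n \setminus B_R$. The Gaussian weight $\boldsymbol{\rho}$ is only used at the very end to justify that the resulting differential inequality actually forces $\|u\|_F$ to lie below the barrier.

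\emph{Step 1: scalar reduction.} Apply Lemma~\ref{lem:Kato-inequality} to $u$ and combine with \eqref{eq:de-turck-linearized-operator}. This yields, in the weak sense,
\begin{align*}
\bigl(\tfrac{\partial}{\partial \tau} - \Delta + \tfrac12 y\cdot\nabla + \tfrac12\bigr) \|u\|_F
 \le \frac{\langle u, h\rangle_F}{\|u\|_F} + \sum_{i,j}\frac{2\langle W_i,[\partial_i u_j,u_j]\rangle_F}{\|u\|_F} - \frac{1}{n}\frac{\sum_{i,j}\|[\partial_i u_j,u_j]\|_F^2}{\|u\|_F^3} + \frac{C}{\tilde r^2}\|u\|_F.
\end{align*}
The decisive point is that the bracket term $[W_i,\partial_i u_j]$ from $L$, which is only linear in $\nabla u$, can be absorbed by completing the square against the quadratic bracket term coming from the refined Kato inequality, using that $|W_i|\lesssim 1/\tilde r$. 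After this absorption the entire bracket contribution reduces to a harmless potential $\tfrac{C_0}{\tilde r^2}\|u\|_F$, so that
\begin{align*}
\bigl(\tfrac{\partial}{\partial \tau} - L'\bigr)\|u\|_F \le \|h\|_F, \qquad L':= \Delta - \tfrac12 y\cdot\nabla - \tfrac12 + \frac{C_0}{\tilde r^2}.
\end{align*}
I expect this absorption to be the main technical obstacle: a naive use of Cauchy--Schwarz at the level of $|\nabla u|$ vs $|\nabla\|u\||$ is too lossy, so the proof genuinely requires retaining the commutator squared on the right-hand side of Kato's inequality.

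\emph{Step 2: barrier construction.} For case (a), take $\psi(y,\tau)=\alpha r^{d} e^{-\delta\tau}$ on $\RR^n\setminus B_R$, where $\alpha$ is chosen so large (in terms of $\|u(\cdot,0)\|_0^{[d]}$, the side data $\sup_{\partial B_R\times \RR_+}e^{\delta\tau}\|u\|_F/r^d$, and $\|h\|_0^{[d],\delta}$) that $f:=\|u\|_F-\psi$ is negative on the parabolic boundary. A direct calculation gives
\begin{align*}
(\partial_\tau - L')\psi = \alpha\bigl[\bigl(\tfrac{1+d}{2}-\delta\bigr)r^d - (C_0 + d(n+d-2))r^{d-2}\bigr]e^{-\delta\tau},
\end{align*}
and since $\delta<\tfrac{1+d}{2}$, choosing $R$ large enough makes the $r^d$ term dominant with coefficient bounded below by $\tfrac{\alpha}{2}(\tfrac{1+d}{2}-\delta)$. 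With $\alpha$ large this beats $\|h\|_F$, yielding $(\partial_\tau-L')f\le 0$ on $Q_{R,\infty}$. For case (b), the analogue would be $\alpha r^{-1}e^{-\delta\tau}$, but $L'(r^{-1})\sim r^{-3}$ is of the wrong sign and order to control $\|h\|_F$ directly; instead I would use $\psi=\alpha'(r^{-1}-r^{d'})$ with $-3<d'<-1$, so the $r^{d'}$ correction supplies the missing negative term of strength $\tfrac{-1-d'}{2}\alpha' r^{d'}$, which again dominates $\|h\|_F\lesssim r^{d'}\|h\|_0^{[d']}$. (The restriction $d'\in(-3,-1)$ is no loss: larger decay rates only strengthen the hypothesis.)

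\emph{Step 3: concluding via energy.} To upgrade $(\partial_\tau-L')f\le 0$ with $f\le 0$ on the parabolic boundary of $Q_{R,\infty}$ to $f\le 0$ throughout, set $f_+=\max\{f,0\}$. The regularity hypothesis on $u$ places $f_+\in L^2_{\mathrm{loc}}(\RR_+; H^1_{\boldsymbol{\rho}}(\RR^n\setminus B_R))$ and, by the boundary inequality, $f_+$ vanishes near $\partial B_R$. Multiplying by $f_+\boldsymbol{\rho}$, integrating over $\RR^n\setminus B_R$, and applying \cite[\S 5.9.2, Theorem 3]{Evans} together with integration by parts produces
\begin{align*}
\tfrac{1}{2}\tfrac{d}{d\tau}\|f_+\|_{\boldsymbol{\rho};\RR^n\setminus B_R}^2 \le -\|\nabla f_+\|_{\boldsymbol{\rho};\RR^n\setminus B_R}^2 - \bigl(\tfrac{1}{2}-\tfrac{C_0}{R^2}\bigr)\|f_+\|_{\boldsymbol{\rho};\RR^n\setminus B_R}^2 \le 0
\end{align*}
for $R$ large. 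Since $f_+(\cdot,0)=0$, we conclude $f\le 0$, hence $\|u\|_F\le\psi$ on $Q_{R,\infty}$, which rearranges to the claimed estimates after recalling how $\alpha,\alpha'$ depend on the data. The interior ball $Q_R$ is absorbed into the $\|u\|_{0;Q_R}^{[d],\delta}$ term on the right-hand side.
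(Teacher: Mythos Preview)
Your proposal is correct and follows essentially the same approach as the paper's proof: the scalar reduction via the refined Kato inequality with absorption of the $[W_i,\partial_i u_j]$ term against the commutator-squared contribution, the same barriers $\alpha r^d e^{-\delta\tau}$ and $\alpha'(r^{-1}-r^{d'})$ for the two cases (with the same reduction to $d'\in(-3,-1)$), and the concluding Gaussian-weighted energy argument for $f_+$. The paper carries out exactly these steps with the same computations.
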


\subsection{Pointwise estimates} \label{subsec:pointwise-estimates}
We have the following interior estimate:
\begin{lemma} \label{lem:weighted-holder-estimate-on-cpt}
    Let $u, h$ be solutions to \eqref{eq:inhomogeneous-linear-PDE}.
    Suppose
    \begin{align*}
        \|h\|_{0, \alpha}^{\bar{\delta}} < \infty, \quad \Pi_{>0}(u(\cdot, 0)) \equiv 0,
    \end{align*}
    and
    \begin{align*}
        \int_0^\infty |e^{\delta'\tau} (\|u(\cdot, \tau)\|_{\boldsymbol{\rho}, 2} + \|\frac{\partial}{\partial \tau} u(\cdot, \tau)\|_{\boldsymbol{\rho}})|^2 d\tau < \infty.
    \end{align*}
    where $0 < \delta' < \min\{\bar{\delta}, \lambda_{I+1}\}$ and $d \in \RR$.
    Then, for every $R>1$, we have
    \begin{align*}
        \|u\|_{2, \alpha; Q_R}^{\delta'} \leq C(\alpha, \delta', \bar{\delta}, R) \left(\|h\|_{0, \alpha}^{\bar{\delta}} + \|u(\cdot, 0)\|_{2, \alpha; B_{R+1}}\right).
    \end{align*}
    \begin{proof}
        Note that the coefficients of $L$ have the bounded $\|\cdot\|_{0, \alpha; B_{R+1} \times [0, \infty)}$-norm.
        Applying Lemma \ref{lem:existence-unique-sol-parabolic-PDE} with $\delta = (\delta' + \bar{\delta}) / 2$ implies
        \begin{align*}
            e^{\delta' \tau} \|u(\cdot, \tau)\|_{\boldsymbol{\rho}} & \leq C(\delta', \bar{\delta}) \left[ \int_0^\infty |e^{\delta \tau} \|h(\cdot, \tau)\|_{\boldsymbol{\rho}}|^2 d\tau \right]^{1/2} \notag \\
                                                                    & \leq C(\delta', \bar{\delta}) \|h\|_{0, \alpha}^{\bar{\delta}}.
        \end{align*}
        Then the interior $L^2$-Schauder estimate(Corollary \ref{thm:parabolic-Schauder-Lp-estimate}) to a ball cover of $B_R$ gives the exponential decay result
        \begin{align}
             & \|u\|_{2, \alpha; Q_R^{\tau-1/2, \tau}} \leq C(R, \alpha) \left(\|h\|_{0, \alpha; Q_{R+1}^{\tau-1, \tau}} + \|u\|_{L^2(Q_{R+1}^{\tau-1, \tau})}\right) \notag \\
             & \leq C(R, \alpha, \delta', \bar{\delta}) \left(\|h\|_{0, \alpha ; Q_{R+1}^{\tau-1, \tau}} + e^{-\delta' \tau} \|h\|_{0, \alpha}^{\bar{\delta}}\right)  \notag \\
             & \leq C(R, \alpha, \delta', \bar{\delta}) e^{-\delta' \tau} \|h\|_{0, \alpha}^{\bar{\delta}}\label{eq:cpt-weighted-holder-estimate-on-final}
        \end{align}
        for all $\tau \in [1, \infty)$.
        Similarly, Corollary \ref{cor:parabolic-Schauder-Lp-initial-data-estimate} gives the estimate near the initial time:
        \begin{align}
            \|u\|_{2, \alpha; Q_R^{0, 1}} & \leq C(R, \alpha) \left(\|u\|_{L^2(Q_{R+1}^{0, 1})} + \|h\|_{0, \alpha; Q_{R+1}^{0, 1}} + \|u(\cdot, 0)\|_{2, \alpha; B_{R+1}}\right) \notag                                    \\
                                          & \leq C(R, \alpha, \delta', \bar{\delta})\left(\|h\|_{0, \alpha}^{\bar{\delta}} + \|u(\cdot, 0)\|_{2, \alpha; B_{R+1}}\right).\label{eq:cpt-weighted-holder-estimate-on-initial}
        \end{align}
        Combining \eqref{eq:cpt-weighted-holder-estimate-on-initial} and \eqref{eq:cpt-weighted-holder-estimate-on-final} gives the desired estimate.
    \end{proof}
\end{lemma}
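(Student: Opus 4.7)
The plan is to combine the Gaussian $L^2$-decay from Lemma \ref{lem:existence-unique-sol-parabolic-PDE} with interior parabolic Schauder estimates on a bounded spatial domain, splitting into an interior-time regime $\tau \geq 1$ and a near-initial regime $\tau \in [0,1]$.

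First, I would pick an intermediate decay rate $\delta = (\delta' + \bar{\delta})/2$ so that $\delta' < \delta < \bar{\delta}$ and $\delta < \lambda_{I+1}$. The hypothesis $\|h\|_{0,\alpha}^{\bar{\delta}} < \infty$ immediately gives
\begin{align*}
\int_0^\infty |e^{\delta \tau}\|h(\cdot,\tau)\|_{\boldsymbol{\rho}}|^2\, d\tau \leq C(\delta, \bar\delta)\bigl(\|h\|_{0,\alpha}^{\bar\delta}\bigr)^2,
\end{align*}
since the Gaussian weight is integrable and $2(\delta - \bar\delta) < 0$. Since $u$ satisfies the projection condition $\Pi_{>0}(u(\cdot, 0)) = 0$ and the integrability assumption \eqref{eq:ddu-dudt-L2-estimate} (with our $\delta'$), the uniqueness clause of Lemma \ref{lem:existence-unique-sol-parabolic-PDE} forces $u$ to coincide with the explicit solution constructed there. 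Applying the pointwise-in-time bound \eqref{eq:u-L2-decay} yields
\begin{align*}
e^{\delta'\tau}\|u(\cdot,\tau)\|_{\boldsymbol{\rho}} \leq C(\delta',\bar\delta)\|h\|_{0,\alpha}^{\bar\delta} \quad \text{for all } \tau \in \RR_+.
\end{align*}

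The second step is to upgrade this weighted $L^2_{\boldsymbol{\rho}}$-decay to a weighted parabolic $C^{2,\alpha}$-decay on the bounded set $B_R$. Crucially, although the coefficient $-\tfrac{1}{2}y\cdot\nabla$ of $L$ grows linearly, on the fixed ball $B_{R+1}$ it is bounded, as are $W$ and $F_W$; hence the coefficients of $L$ have bounded $C^{\alpha}(B_{R+1}\times\RR_+)$-norm, and the standard interior parabolic $L^2$-Schauder estimate (Theorem \ref{thm:parabolic-Schauder-Lp-estimate}) applies on any parabolic cylinder $Q_{R+1}^{\tau-1,\tau}$ with $\tau \geq 1$. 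Since $\boldsymbol{\rho}$ is bounded below by a constant $c(R) > 0$ on $B_{R+1}$, the $L^2_{\boldsymbol\rho}$-norm controls the $L^2$-norm on this ball up to a constant depending only on $R$. Combining these gives
\begin{align*}
\|u\|_{2,\alpha;Q_R^{\tau-1/2,\tau}} \leq C(R,\alpha,\delta',\bar\delta)\, e^{-\delta'\tau}\|h\|_{0,\alpha}^{\bar\delta},
\end{align*}
which is the exponential decay claimed (up to adjusting the initial-time data).

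The remaining step is the initial slab $\tau \in [0,1]$, where the interior estimate is inapplicable because we are at the parabolic boundary. Here I would use the initial-data version (Corollary \ref{cor:parabolic-Schauder-Lp-initial-data-estimate}) on $Q_{R+1}^{0,1}$, which produces a bound of the form
\begin{align*}
\|u\|_{2,\alpha;Q_R^{0,1}} \leq C(R,\alpha)\Bigl(\|u\|_{L^2(Q_{R+1}^{0,1})} + \|h\|_{0,\alpha;Q_{R+1}^{0,1}} + \|u(\cdot,0)\|_{2,\alpha;B_{R+1}}\Bigr),
\end{align*}
and the first two terms are dominated by $\|h\|_{0,\alpha}^{\bar\delta}$ exactly as before. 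Adding the interior and near-initial estimates, taking the supremum in $\tau$, and absorbing constants into $C(\alpha,\delta',\bar\delta,R)$ completes the bound. There is no serious obstacle; the only mildly delicate point is recognizing that on a fixed ball the unbounded drift coefficient of $L$ is harmless, so that classical compact-set Schauder theory applies without modification, and that the $L^2_{\boldsymbol\rho}$-decay from Lemma \ref{lem:existence-unique-sol-parabolic-PDE} transfers to ordinary $L^2$-decay on $B_{R+1}$ at the cost of an $R$-dependent constant.
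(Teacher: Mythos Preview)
Your proposal is correct and follows essentially the same route as the paper: apply Lemma~\ref{lem:existence-unique-sol-parabolic-PDE} with the intermediate rate $\delta=(\delta'+\bar\delta)/2$ to get $L^2_{\boldsymbol\rho}$-decay, convert to unweighted $L^2$-decay on $B_{R+1}$, then use the interior $L^2$-Schauder estimate for $\tau\geq 1$ and the initial-data version for $\tau\in[0,1]$. One small slip: your parenthetical claim ``$\delta<\lambda_{I+1}$'' need not hold when $\bar\delta$ is large, but this is irrelevant since Lemma~\ref{lem:existence-unique-sol-parabolic-PDE} only requires $\delta'<\min\{\delta,\lambda_{I+1}\}$, which is satisfied.
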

\begin{lemma} \label{lem:our-holder-estimate}
    Suppose $u, h$ satisfy \eqref{eq:inhomogeneous-linear-PDE}.
    Let $d \in \RR$ and $\delta > 0$.
    Then, we have the following estimates.
    \begin{enumerate} [label=(\alph*)]
        \item \label{item:our-spatial-Schauder-estimate}
              Spatial H\"older norm estimate is given by
              \begin{align*}
                  \sup_{\tau \in [0, \infty)} \left[e^{\delta \tau} \|u(\cdot, \tau)\|_{2, \alpha}^{[d]}\right] \leq C\left(\|u(\cdot, 0)\|^{[d]}_{2, \alpha} + \|h\|^{[d - \alpha], \delta}_{0, \alpha} + \|u\|_0^{[d], \delta}\right)
              \end{align*}
        \item \label{item:our-parabolic-Schauder-estimate}
              Parabolic H\"older norm estimate is given by
              \begin{align*}
                  \|u\|_{2, \alpha}^{[1 + d + \alpha], \delta} \leq C\left(\|u(\cdot, 0)\|^{[d]}_{2, \alpha} + \|h\|^{[d - \alpha], \delta}_{0, \alpha} + \|u\|_0^{[d], \delta}\right).
              \end{align*}
    \end{enumerate}
    Here, the constant $C$ depends only on $n, \alpha, \delta, d$.
    \begin{proof}
        The coefficients of $L$ have the bounded $\|\cdot\|_{0, \alpha; Q_{20}}$-norm.
        Then, Theorem \ref{thm:parabolic-Schauder-initial-data-estimate} gives
        \begin{align*}
            \|u\|_{2, \alpha; Q_{15}^{0, 1}} \leq C \left( \|u(\cdot, 0)\|_{2, \alpha; B_{20}} + \|u\|_{0; Q_{20}^{0, 1}} + \|h\|_{0, \alpha; Q_{20}^{0, 1}}\right)
        \end{align*}
        and Theorem \ref{thm:parabolic-Schauder-estimate} implies
        \begin{align*}
            \|u\|_{2, \alpha; Q_{15}^{\tau+1/2, \tau + 1}} \leq C (\|u\|_{0; Q_{20}^{\tau, \tau + 1}} + \|h\|_{0, \alpha; Q_{20}^{\tau, \tau + 1}})
        \end{align*}
        for all $\tau \in \RR_+$.
        Combining these estimates yields the desired result for the cylinder $Q_{15}$ as
        \begin{align} \label{eq:our-parabolic-Schauder-estimate-cpt}
            \|u\|_{2, \alpha; Q_{1}^{\tau, \tau+1}(y)} \leq C e^{-\delta \tau}\left( \|u(\cdot, 0)\|_{2, \alpha}^{[d]} + \|h\|_{0, \alpha}^{[d - \alpha, \delta]}  + \|u\|_0^{[d], \delta}\right)
        \end{align}
        for all $(y, \tau) \in Q_{15}$.

        For the remaining region, we rescale the function $u$.
        Let $\tau_0 \in \RR_+$ be arbitrary.
        We work with the rescaled variables
        \begin{align*}
            t = -e^{\tau_0 - \tau} \text{ and } x = e^{(\tau_0 - \tau)/2} y.
        \end{align*}
        From this point forward, we will consistently use $(t, x)$ and $(\tau, y)$ to represent these variables.
        Define the rescaled function $v(x, t)$ as
        \begin{align*}
            v(x, t) \defeq \frac{1}{\sqrt{-t}}u(\frac{x}{\sqrt{-t}}, -\log (-t) + \tau_0)
        \end{align*}
        in the region $t \in [-1, 0)$.
        We introduce a H\"older norm comparison between rescaled functions.
        \begin{claim}
            Let $\xi(x, t) = \eta(\frac{x}{\sqrt{-t}}, -\log(-t) + \tau_0)$.
            For all $R, a > 1$, $k \in \Set{0, 1}$ and $T \in [0, 1/2]$, we have the following estimates:
            \begin{enumerate} [label=(\alph*)]
                \item $\|\xi\|_{2k, \alpha; Q_{R, aR}^{-e^{-T}, -1/e}} \leq C(a, k) R^{k + \alpha} \|\eta\|_{2k, \alpha; Q_{R, \sqrt{e}aR}^{\tau_0 + T, \tau_0 + 1}}$,
                \item $\|\eta\|_{2k, \alpha; Q_{R, aR}^{\tau_0 + T, \tau_0 + 1}} \leq C(a, k) R^{k + \alpha} \|\xi\|_{2k, \alpha; Q_{R/\sqrt{e},aR}^{-e^{-T}, -1/e}}$.
            \end{enumerate}
        \end{claim}
        The proof of the claim follows directly from a straightforward calculation, where the $R^{k + \alpha}$ factor arises due to the time-direction differentiation in rescaled coordinates.
        Denote the rescaled variables $\hat{W}, \hat{F_W}, \hat{h}$ as
        \begin{align}
            \hat{W}_i(x, t) & \defeq \frac{1}{\sqrt{-t}} W_i(\frac{x}{\sqrt{-t}}), \quad \hat{F_W}_{ij} \defeq \frac{1}{-t} {F_W}_{ij}(\frac{x}{\sqrt{-t}}),\label{eq:rescaled-W-F} \\
            \hat{h}(x, t)   & \defeq \frac{1}{\sqrt{-t}^3}h( \frac{x}{\sqrt{-t}}, -\log(-t) + \tau_0). \notag
        \end{align}
        The rescaled flow equation for $v(x, t)$ becomes
        \begin{align*}
            \frac{\partial v(x, t)}{\partial t} = \hat{L}v(x, t) + \hat{h}(x, t)
        \end{align*}
        where
        \begin{align*}
            \hat{L} v_j(x, t) \defeq \Delta_x v_j + 2[\hat{W}_i, \frac{\partial}{\partial x^i} v_j] + [\hat{W}_i, [\hat{W}_i, v_j]] + 2[v_i, \hat{F_W}_{ij}].
        \end{align*}
        The $\|\cdot\|_{0, \alpha; \RR^n \times [-1, -\frac{1}{e}]}$-norm of coefficients of $\hat{L}$ are bounded by a constant $C(R, d)$.

        For the initial time estimate, set $\tau_0 = 0$ and $R > 10$ arbitrarily.
        By applying parabolic Schauder estimate with an initial boundary condition(see Theorem \ref{thm:parabolic-Schauder-initial-data-estimate}) to $Q_{R/4,3R}^{-1, -1/4}$, we obtain
        \begin{align}
            \|v\|_{2, \alpha; Q_{R/2, 2R}^{-1, -1/3}} & \leq C \left( \|v(\cdot, -1)\|_{2, \alpha; B_{3R} \setminus B_{R/4}} + \|\hat{h}\|_{0, \alpha; Q_{R/4,3R}^{-1, -1/3}} + \|v\|_{0; Q_{R/4,3R}^{-1, -1/3}}\right) \notag \\
                                                      & \leq C \left( \|u(\cdot, 0)\|_{2, \alpha; B_{3R} \setminus B_{R/4}} + R^{\alpha}\|h\|_{0, \alpha; Q_{R/4, 6R}^{0, 1}} + \|u\|_{0; Q_{R/4, 6R}^{0, 1}}\right)    \notag \\
                                                      & \leq C R^d \left( \|u(\cdot, 0)\|_{2, \alpha}^{[d]} + \|h\|_{0, \alpha}^{[d-\alpha], \delta} + \|u\|_0^{[d]}\right)\label{eq:our-parabolic-Schauder-estimate-initial}
        \end{align}
        where we used the claim for the second inequality.

        Now, let $\tau_0 \geq 0$ be arbitrary.
        The parabolic Schauder estimate(see Theorem \ref{thm:parabolic-Schauder-estimate}) applied to $Q_{R/4, 3R}^{\tau_0 + 1/2, \tau_0 + 1}$ gives
        \begin{align}
            \|v\|_{2, \alpha; Q_{R/2, 2R}^{-1/2, -1/3}} & \leq C \left( \|\hat{h}\|_{0, \alpha; Q_{R/4, 3R}^{-1, -1/3}} + \|v\|_{0; Q_{R/4, 3R}^{-1, -1/3}}\right)                        \notag                           \\
                                                        & \leq C \left( R^\alpha \|h\|_{0, \alpha; Q_{R/4, 6R}^{\tau_0, \tau_0 + 1}} + \|u\|_{0; Q_{R/4, 6R}^{\tau_0, \tau_0 + 1}}\right) \notag                           \\
                                                        & \leq CR^{d}e^{-\delta \tau_0} \left( \|h\|_{0, \alpha}^{[d-\alpha], \delta} + \|u\|_0^{[d], \delta}\right). \label{eq:our-parabolic-Schauder-estimate-late-time}
        \end{align}

        We first prove part \ref{item:our-spatial-Schauder-estimate}.
        The spatial H\"older norm of $u$ is estimated by $v$ as
        \begin{align*}
            \sup_{\tau \in [\tau_0 + T, \tau_0 + 1]} \|u(\cdot, \tau)\|_{2, \alpha; B_{2R} \setminus B_R} & \leq C \sup_{t \in [-e^{-T}, -1/e]} \|v(\cdot, t)\|_{2, \alpha; B_{2R} \setminus B_{R/2}} \\
                                                                                                          & \leq C \|v\|_{2, \alpha; Q_{R/2, 2R}^{-e^{-T}, -1/e}}
        \end{align*}
        for $T \in [0, 1/2]$.
        Combining this with \eqref{eq:our-parabolic-Schauder-estimate-initial} and \eqref{eq:our-parabolic-Schauder-estimate-late-time} gives
        \begin{align*}
            \sup_{\tau \in [0, 1]} \|u(\cdot, \tau)\|_{2, \alpha; B_{2R} \setminus B_R} \leq C R^d \left( \|u(\cdot, 0)\|_{2, \alpha}^{[d]} + \|h\|_{0, \alpha}^{[d-\alpha], \delta} + \|u\|_0^{[d], \delta}\right),
        \end{align*}
        and for all $\tau_0 \in \RR_+$,
        \begin{align*}
            \sup_{\tau \in [\tau_0 + 1/2, \tau_0 + 1]} \|u(\cdot, \tau)\|_{2, \alpha; B_{2R} \setminus B_R} \leq C R^d e^{-\delta \tau_0} \left( \|h\|_{0, \alpha}^{[d-\alpha], \delta} + \|u\|_0^{[d], \delta}\right).
        \end{align*}
        Hence, we get the desired estimate of \ref{item:our-spatial-Schauder-estimate} with the interior estimate \eqref{eq:our-parabolic-Schauder-estimate-cpt}.

        Next, we prove part \ref{item:our-parabolic-Schauder-estimate}.
        The claim gives
        \begin{align*}
            \|u\|_{2, \alpha; (B_{2R} \setminus B_R) \times [\tau_0 + T, \tau_0 + 1]} & \leq CR^{1 + \alpha} \|v\|_{2, \alpha; (B_{2R} \setminus B_{R/2}) \times [-e^{-T}, -1/e]}
        \end{align*}
        for $T \in [0, 1/2]$.
        In the same manner as the proof of part \ref{item:our-spatial-Schauder-estimate}, we get the desired estimate of \ref{item:our-parabolic-Schauder-estimate}.
    \end{proof}
\end{lemma}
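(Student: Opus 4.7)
The plan is to reduce the weighted Schauder-type estimate to standard (unweighted) parabolic Schauder theorems via a two-region decomposition that exploits the scale invariance of the soliton $W$. The compact region is handled by the boundedness of the coefficients of $L$, while the exterior region is reduced to standard Schauder on annular cylinders after undoing the similarity rescaling.

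\textbf{Bounded core.} On a fixed parabolic cylinder such as $Q_{20}$, the coefficients of $L$---namely $W_i$, $(F_W)_{ij}$, and the drift $-\tfrac{1}{2}y\cdot\nabla$---are bounded in $C^{0,\alpha}$. Hence I can apply standard parabolic Schauder estimates with an initial-boundary term on $Q_{20}^{0,1}$ and interior Schauder on $Q_{20}^{\tau,\tau+1}$ for $\tau\ge 1/2$. This bounds $\|u\|_{2,\alpha; Q_1^{\tau,\tau+1}(y)}$ for $|y|\le 15$ by $\|u(\cdot,0)\|_{2,\alpha;B_{20}}$, $\|h\|_{0,\alpha; Q_{20}^{\tau,\tau+1}}$, and $\|u\|_{0; Q_{20}^{\tau,\tau+1}}$, each of which is majorized by the weighted norms in the statement once I multiply through by $\tilde r(y)^{-d}e^{\delta\tau}$.

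\textbf{Exterior region.} For $|y|\ge 15$ the drift is unbounded, so direct Schauder fails. For each fixed $\tau_0\ge 0$, I undo the self-similar rescaling via
\[
v(x,t)\defeq \frac{1}{\sqrt{-t}}\, u\!\left(\frac{x}{\sqrt{-t}},\, -\log(-t)+\tau_0\right),\qquad t\in[-1,-1/e].
\]
Because $W$ is self-similar, the pulled-back coefficients $\hat W_i(x,t)=\tfrac{1}{\sqrt{-t}}W_i(x/\sqrt{-t})$ and $\hat F_W$ have $C^{0,\alpha}$-norm uniformly bounded on every annular cylinder $\{R/4\le |x|\le 3R\}\times[-1,-1/e]$, independently of $R$. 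Therefore $v$ solves a parabolic system $\partial_t v=\hat L v+\hat h$ with uniformly controlled coefficients on each such annulus, and standard parabolic Schauder yields
\[
\|v\|_{2,\alpha; Q_{R/2,2R}^{-1,-1/3}} \;\le\; C\bigl(\|\hat h\|_{0,\alpha; Q_{R/4,3R}^{-1,-1/3}} + \|v\|_{0; Q_{R/4,3R}^{-1,-1/3}} + \text{initial-data term at }\tau_0=0\bigr).
\]

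\textbf{Translating back and bookkeeping.} The conversion back to $u$ requires a short comparison lemma for parabolic Hölder norms under the substitution $t=-e^{\tau_0-\tau}$, $x=e^{(\tau_0-\tau)/2}y$. On an annulus of radius $R$, time differentiation in $(x,t)$-coordinates produces an extra factor $R$ via the chain rule (since $\partial_t$ acting on $\eta(x/\sqrt{-t},\cdot)$ hits $x_i\sim R$), and time Hölder differences in $(x,t)$ induce spatial shifts of size $R$ in $y$; together these yield a factor $R^{k+\alpha}$ comparing $\|\cdot\|_{2k,\alpha}$-norms in the two coordinate systems. Combined with the $R^{d}$ weight from $\tilde r(y)^{-d}$, part \ref{item:our-spatial-Schauder-estimate} (purely spatial Hölder) retains only $R^{d}$, whereas part \ref{item:our-parabolic-Schauder-estimate} (full parabolic Hölder) acquires the predicted $R^{1+d+\alpha}$. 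Extracting $e^{-\delta\tau_0}$ from the weighted right-hand sides and taking a supremum over $\tau_0\ge 0$ and $y$ then closes both estimates.

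\textbf{Main obstacle.} The principal subtlety is the $R$-power bookkeeping through the rescaling: matching the exterior weight, the extra $R^{k+\alpha}$ in the Hölder comparison, and the separate treatment of $\tau_0=0$ (where the initial-data term is genuinely present) with the estimate for $\tau_0>0$ (where it is not) must all be consistent. Once this is in place, the remaining work is a routine application of off-the-shelf parabolic Schauder theorems on bounded annular cylinders where coefficients are uniformly controlled.
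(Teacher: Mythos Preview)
Your proposal is correct and follows essentially the same strategy as the paper: a bounded-core estimate via standard parabolic Schauder on $Q_{20}$, combined with an exterior estimate obtained by undoing the similarity rescaling to reduce to a parabolic system $\partial_t v=\hat L v+\hat h$ with uniformly controlled coefficients on annular cylinders, then transferring back via a H\"older-norm comparison that produces the $R^{k+\alpha}$ factor. Your identification of the main bookkeeping issue---the distinct treatment of $\tau_0=0$ (initial-data Schauder) versus $\tau_0>0$ (interior Schauder), and why part (a) avoids the extra $R^{1+\alpha}$ while part (b) does not---matches the paper exactly.
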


Recall the following definition of the norm:
\begin{align*}
    \|u\|_* \defeq \|u\|^{[\gamma + \alpha], \delta}_{2, \alpha} + \|u\|_0^{[-1 + \gamma], \delta} + \|Du\|_0^{[-1 + \gamma], \delta},
\end{align*}
where $0 < \alpha < \gamma < 1/100$ are constants and $\delta>0$ is a small constant to be determined later.

\begin{corollary} \label{cor:our-norm-estimate-from-h-u0-u-norm}
    Suppose $u, h$ satisfy \eqref{eq:inhomogeneous-linear-PDE}.
    Let $\delta>0$.
    Then, we have
    \begin{align} \label{eq:our-norm-estimate-from-h-u0-u-norm}
        \|u\|_* \leq C\left(\|u(\cdot, 0)\|_{2, \alpha}^{[-1 + \gamma]} + \|h\|_{0, \alpha}^{[-1 + \gamma - \alpha], \delta} + \|u\|_0^{[-1 + \gamma], \delta}\right).
    \end{align}
    \begin{proof}
        For the sake of simplicity, we denote the right hand side of \eqref{eq:our-norm-estimate-from-h-u0-u-norm} by $H$.
        Lemma \ref{lem:our-holder-estimate} with $d = -1 + \gamma$ gives
        \begin{align*}
            \sup_{\tau \in [0, \infty)} \left[e^{\delta \tau} \|u(\cdot, \tau)\|_{2, \alpha}^{[-1 + \gamma]}\right] \leq CH
        \end{align*}
        and
        \begin{align*}
            \|u\|_{2, \alpha}^{[\gamma + \alpha], \delta} \leq CH.
        \end{align*}
        Then, all the terms of $\|u\|_*$ are directly bounded by $CH$.
    \end{proof}
\end{corollary}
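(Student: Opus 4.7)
Let $H$ denote the right-hand side of \eqref{eq:our-norm-estimate-from-h-u0-u-norm}, i.e.\ $H = \|u(\cdot,0)\|_{2,\alpha}^{[-1+\gamma]} + \|h\|_{0,\alpha}^{[-1+\gamma-\alpha],\delta} + \|u\|_{0}^{[-1+\gamma],\delta}$. The plan is simply to invoke Lemma~\ref{lem:our-holder-estimate} twice, both times with $d = -1+\gamma$, and then reconcile the spatial bound from part~\ref{item:our-spatial-Schauder-estimate} with the parabolic sup norms appearing in the last two summands of $\|u\|_*$.

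First, with $d = -1+\gamma$ one has $1 + d + \alpha = \gamma+\alpha$ and $d-\alpha = -1+\gamma-\alpha$, so part~\ref{item:our-parabolic-Schauder-estimate} directly yields
\begin{align*}
\|u\|_{2,\alpha}^{[\gamma+\alpha],\delta} \leq CH,
\end{align*}
which handles the first summand of $\|u\|_*$. Applying part~\ref{item:our-spatial-Schauder-estimate} with the same $d$ gives
\begin{align*}
\sup_{\tau \geq 0} e^{\delta\tau}\, \|u(\cdot,\tau)\|_{2,\alpha}^{[-1+\gamma]} \leq CH.
\end{align*}
Since the spatial $C^{2,\alpha}$ norm on $B_1(y)$ dominates $\sup_{B_1(y)}(|u| + |Du|)$, this translates into
\begin{align*}
\sup_{z \in B_1(y)} \bigl(|u(z,s)| + |Du(z,s)|\bigr) \leq C\, \tilde{r}(y)^{-1+\gamma} e^{-\delta s} H
\end{align*}
for every $(y,s) \in \RR^n \times \RR_+$.

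Finally, for any fixed $(y,\tau)$ and any $s \in [\tau, \tau+1]$ one has $e^{\delta\tau} e^{-\delta s} \leq 1$, so multiplying through by $e^{\delta\tau}\tilde{r}(y)^{1-\gamma}$ and taking the sup over $s \in [\tau,\tau+1]$ gives
\begin{align*}
e^{\delta\tau}\tilde{r}(y)^{1-\gamma} \bigl(\|u\|_{0;Q(y,\tau)} + \|Du\|_{0;Q(y,\tau)}\bigr) \leq CH.
\end{align*}
Taking the supremum over $(y,\tau) \in \RR^n \times \RR_+$ bounds the remaining two summands of $\|u\|_*$, completing the proof.

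There is no genuine obstacle here: the entire argument is a bookkeeping exercise once Lemma~\ref{lem:our-holder-estimate} is in hand. The only substantive observation is that no loss in the weight $\tilde{r}(y)$ or the exponential $e^{\delta\tau}$ is incurred when passing from the spatial bound at fixed time~$s$ to the parabolic bound over $Q(y,\tau) = B_1(y)\times[\tau,\tau+1]$: the spatial weight depends only on $y$, and enlarging $s$ from $\tau$ to $\tau+1$ only improves the exponential factor.
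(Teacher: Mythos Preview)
Your proof is correct and follows exactly the same approach as the paper's: both apply Lemma~\ref{lem:our-holder-estimate} with $d=-1+\gamma$, using part~\ref{item:our-parabolic-Schauder-estimate} for the first summand of $\|u\|_*$ and part~\ref{item:our-spatial-Schauder-estimate} for the remaining ones. The paper simply writes ``all the terms of $\|u\|_*$ are directly bounded by $CH$'' at the end, whereas you spell out explicitly how the fixed-time spatial bound from part~\ref{item:our-spatial-Schauder-estimate} yields the parabolic sup norms $\|u\|_0^{[-1+\gamma],\delta}$ and $\|Du\|_0^{[-1+\gamma],\delta}$ (noting also that the former is already a term of $H$, so only the $Du$ bound actually needs the argument).
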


\begin{lemma} \label{lem:weighted-decay-holder-multiplication-inequality}
    Let $d_1, d_2, \delta_1, \delta_2 \in \RR$ be arbitrary.
    Then, we have
    \begin{align*}
        \|fg\|^{[d_1+d_2], \delta_1 + \delta_2}_{0, \alpha} \leq C (\|f\|_{0, \alpha}^{[d_1], \delta_1}\|g\|_{0}^{[d_2], \delta_2} + \|f\|_{0}^{[d_1], \delta_1}\|g\|_{0, \alpha}^{[d_2], \delta_2}).
    \end{align*}
    for $f, g$ are functions on $\RR^n \times [0, \infty)$.
    \begin{proof}
        The result directly comes from the following inequality:
        \begin{align*}
            \|fg\|_{0, \alpha; Q(y, \tau)} \leq C(\|f\|_{0, \alpha; Q(y, \tau)}\|g\|_{0; Q(y, \tau)} + \|f\|_{0; Q(y, \tau)}\|g\|_{0, \alpha; Q(y, \tau)}). \quad \qedhere
        \end{align*}
    \end{proof}
\end{lemma}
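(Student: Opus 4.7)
The plan is to reduce the global weighted estimate to the standard Hölder product inequality on each unit parabolic cylinder $Q(y,\tau)$, then absorb the weights $\tilde{r}(y)^{-d_i}$ and $e^{\delta_i\tau}$ multiplicatively and take suprema.

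First I would record the elementary local inequality
\begin{align*}
  \|fg\|_{0,\alpha; Q(y,\tau)} \leq C\bigl(\|f\|_{0,\alpha; Q(y,\tau)} \|g\|_{0; Q(y,\tau)} + \|f\|_{0; Q(y,\tau)} \|g\|_{0,\alpha; Q(y,\tau)}\bigr),
\end{align*}
which is a direct consequence of the two standard pointwise bounds $\|fg\|_{0; Q(y,\tau)} \leq \|f\|_{0; Q(y,\tau)} \|g\|_{0; Q(y,\tau)}$ and $[fg]_{C^P_\alpha; Q(y,\tau)} \leq [f]_{C^P_\alpha; Q(y,\tau)} \|g\|_{0; Q(y,\tau)} + \|f\|_{0; Q(y,\tau)} [g]_{C^P_\alpha; Q(y,\tau)}$ obtained by writing $f(p_1)g(p_1)-f(p_2)g(p_2) = (f(p_1)-f(p_2))g(p_1) + f(p_2)(g(p_1)-g(p_2))$.

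Next I would multiply this inequality by $e^{(\delta_1+\delta_2)\tau}\tilde{r}(y)^{-(d_1+d_2)}$ and factor as
\begin{align*}
  e^{(\delta_1+\delta_2)\tau}\tilde{r}(y)^{-(d_1+d_2)} = \bigl(e^{\delta_1\tau}\tilde{r}(y)^{-d_1}\bigr)\bigl(e^{\delta_2\tau}\tilde{r}(y)^{-d_2}\bigr),
\end{align*}
distributing one factor to $f$ and the other to $g$ on the right-hand side. Taking the supremum over $(y,\tau)\in\RR^n\times\RR$ on both sides and using the definition
\begin{align*}
  \|f\|^{[d],\delta}_{k,\alpha} = \sup_{y,\tau} e^{\delta\tau}\tilde{r}(y)^{-d}\|f\|_{C^P_{k,\alpha}; Q(y,\tau)}
\end{align*}
yields the desired estimate, since the supremum of a product is bounded by the product of suprema.

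There is essentially no obstacle here: the argument is purely algebraic once the local Hölder product inequality is in hand, and the weights $\tilde{r}(y)^{-d_i}$ and $e^{\delta_i\tau}$ are constants on each cylinder $Q(y,\tau)$ (up to multiplicative constants depending only on bounds for $\tilde{r}$ and $e^\tau$ on a unit cylinder, which can be absorbed into $C$). The only mild care needed is to note that $\tilde{r}$ varies by at most a bounded factor on any unit ball $B_1(y)$, so replacing $\tilde{r}$ by its value at the center $y$ costs only a constant; this justifies factoring the weight out of the supremum over $Q(y,\tau)$.
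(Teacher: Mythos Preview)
Your proposal is correct and is essentially identical to the paper's proof, which consists of exactly the local product inequality you record and the remark that the global estimate follows directly from it. Your added remarks about factoring the weights and taking suprema simply spell out what the paper leaves implicit; note that since the weight $e^{\delta\tau}\tilde r(y)^{-d}$ in the definition of $\|\cdot\|_{0,\alpha}^{[d],\delta}$ is evaluated at the fixed center $(y,\tau)$, no comparison of $\tilde r$ across the unit ball is actually needed.
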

The nonlinear term is estimated by the following lemma.
\begin{lemma} \label{lem:nonlinear-estimate}
    Let $0 < \alpha < \gamma < 1/100$ and $\delta > 0$ be fixed.
    There exists a constant $\eta = \eta(d)$ such that for arbitrary $u, \bar{u} \in \Omega^1(\mf{so}(n))$ with $\|u\|_*,  \leq \eta$ and $\|\bar{u}\|_* \leq \eta$:
    \begin{align*}
        \|\mc N(u)\|^{[-1-\gamma], 2\delta}_{0, \alpha} \leq C\|u\|_{*}^2,
    \end{align*}
    and
    \begin{align*}
        \|\mc N(\bar{u}) - \mc N(u)\|_{0, \alpha}^{[-1 -\gamma], 2\delta} \leq C (\|\bar{u}\|_* + \|u\|_*) \|\bar{u} - u\|_*.
    \end{align*}
    \begin{proof}
        We bound the terms $\|Du\|_{0, \alpha}^{[-2\gamma], \delta}$ and $\|u\|_{0, \alpha}^{[-2\gamma], \delta}$ in terms of $\|u\|_*$ using interpolation inequalities.
        Let $(y, \tau) \in \RR^n \times [0, \infty)$ be arbitrary.
        Denote $\tilde{r} = \tilde{r}(y)$ for convenience.
        By applying the interpolation inequality, we obtain
        \begin{align*}
            [Du]_{\alpha; Q(y, \tau)} & \leq C(\epsilon[u]_{2, \alpha; Q(y, \tau)} + \epsilon^{-(1 + \alpha)}\|u\|_{0;Q(y, \tau)})                              \\
                                      & \leq C\|u\|_*e^{-\delta \tau}(\epsilon \tilde{r}^{\gamma + \alpha} + \epsilon^{-(1 + \alpha)}\tilde{r}^{(-1 + \gamma)})
        \end{align*}
        for all $\epsilon > 0$.
        We choose $\epsilon = \tilde{r}^{-3\gamma - \alpha}$.
        Condition $0 < \alpha < \gamma < 1/100$ implies
        \begin{align*}
            [Du]_{\alpha; Q(y, \tau)} & \leq C\|u\|_* e^{-\delta \tau} (\tilde{r}^{-2\gamma} + \tilde{r}^{(1 + \alpha)(3\gamma + \alpha) - 1 + \gamma}) \\
                                      & \leq C\|u\|_*e^{-\delta \tau} \tilde{r}^{-2\gamma}.
        \end{align*}
        Hence, we get
        \begin{align*}
            \|Du\|_{0, \alpha}^{[-2\gamma], \delta} \leq C\|u\|_*.
        \end{align*}
        The interpolation inequality gives
        \begin{align*}
            [u]_{\alpha; Q(y, \tau)} & \leq C(\epsilon[u]_{2, \alpha; Q(y, \tau)} + \epsilon^{-\alpha/2}\|u\|_{0; Q(y, \tau)})                              \\
                                     & \leq C\|u\|_*e^{-\delta \tau}(\epsilon \tilde{r}^{\gamma + \alpha} + \epsilon^{-\alpha/2}\tilde{r}^{(-1 + \gamma)}).
        \end{align*}
        If we choose $\epsilon = \tilde{r}^{-3\gamma - \alpha}$, we get
        \begin{align*}
            [u]_{\alpha ;Q(y, \tau)} & \leq C\|u\|_*e^{-\delta\tau}( \tilde{r}^{-2\gamma} + \tilde{r}^{(\alpha/2)(3\gamma + \alpha) - 1 + \gamma}) \\
                                     & \leq C\|u\|_*e^{-\delta\tau}\tilde{r}^{-2\gamma}.
        \end{align*}
        Finally, we obtain
        \begin{align*}
            \|u\|_{0, \alpha}^{[-2\gamma], \delta} \leq C\|u\|_*.
        \end{align*}
        Recall the nonlinear term $\mc N(u)$ is given by
        \begin{align*}
            \mc N(u)_j & \defeq 2[u_i, \partial_i u_j + [W_i, u_j]] - [u_i, \partial_j u_i + [W_j, u_i]] - [u_i, [u_j, u_i]].
        \end{align*}
        Then, the result comes straightforward from inequalities
        \begin{align*}
             & \|W\|_{0, \alpha}^{[-1], 0} \leq C,                                                                          \\
             & \|u\|_{0, \alpha}^{[-2\gamma], \delta} \leq C\|u\|_*, \quad \|u\|_{0}^{[-1 + \gamma], \delta} \leq \|u\|_*   \\
             & \|Du\|_{0, \alpha}^{[-2\gamma], \delta} \leq C\|u\|_*, \quad \|Du\|_{0}^{[-1 + \gamma], \delta} \leq \|u\|_*
        \end{align*}
        and Lemma \ref{lem:weighted-decay-holder-multiplication-inequality}.
    \end{proof}
\end{lemma}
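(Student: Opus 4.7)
The plan is to establish two auxiliary interpolation bounds for $u$ and $Du$ in spatially weaker weighted H\"older norms, and then apply the weighted product rule of Lemma \ref{lem:weighted-decay-holder-multiplication-inequality} term by term to each bracket appearing in $\mc N(u)$.

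First, the intermediate interpolation estimates
\begin{align*}
    \|u\|_{0, \alpha}^{[-2\gamma], \delta} + \|Du\|_{0, \alpha}^{[-2\gamma], \delta} \leq C\|u\|_*
\end{align*}
should be proved by fixing $(y, \tau) \in \RR^n \times [0, \infty)$ and invoking the standard interior parabolic interpolation inequalities on the unit cylinder $Q(y, \tau)$, namely
\begin{align*}
    [Du]_{\alpha; Q(y,\tau)} &\leq C\bigl(\epsilon\,[u]_{2, \alpha; Q(y,\tau)} + \epsilon^{-(1+\alpha)} \|u\|_{0; Q(y,\tau)}\bigr), \\
    [u]_{\alpha; Q(y,\tau)} &\leq C\bigl(\epsilon\,[u]_{2, \alpha; Q(y,\tau)} + \epsilon^{-\alpha/2} \|u\|_{0; Q(y,\tau)}\bigr),
\end{align*}
and then choosing $\epsilon = \tilde{r}(y)^{-3\gamma - \alpha}$. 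Controlling the right-hand sides by the relevant components of $\|u\|_*$, the first piece contributes exactly the spatial weight $\tilde{r}^{-2\gamma}$, while the second piece contributes a spatial exponent of the form $(1+\alpha)(3\gamma + \alpha) - 1 + \gamma$ (respectively $(\alpha/2)(3\gamma + \alpha) - 1 + \gamma$), both of which are at most $-2\gamma$ because $0 < \alpha < \gamma < 1/100$.

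Next, the plan is to split
\begin{align*}
    \mc N(u)_j = 2[u_i, \partial_i u_j + [W_i, u_j]] - [u_i, \partial_j u_i + [W_j, u_i]] - [u_i, [u_j, u_i]]
\end{align*}
into three types and estimate each via Lemma \ref{lem:weighted-decay-holder-multiplication-inequality}. For the bilinear terms of the form $[u, Du]$, one factor carries $\|\cdot\|_{0, \alpha}^{[-2\gamma], \delta}$ and the other carries $\|\cdot\|_0^{[-1+\gamma], \delta}$, yielding the target spatial exponent $-2\gamma + (-1+\gamma) = -1-\gamma$ and time decay $2\delta$. For the pieces of the form $[u, [W, u]]$, using $\|W\|_{0, \alpha}^{[-1]} \leq C$ together with two factors of $u$ produces spatial exponent $-2-\gamma$, which is stronger than required. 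For the cubic term $[u, [u, u]]$, iterating the product rule yields spatial exponent at most $-2$ and time decay $3\delta$; the smallness hypothesis $\|u\|_* \leq \eta$ then absorbs the extra factor through $\|u\|_*^3 \leq \eta\|u\|_*^2$.

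The difference estimate follows by the same strategy after writing $\mc N(\bar u) - \mc N(u)$ as a telescoping sum of terms linear in $\bar u - u$ with coefficients depending on $\bar u$, $u$, and $W$, so that the quadratic and cubic pieces contribute respectively $C(\|\bar u\|_* + \|u\|_*)\|\bar u - u\|_*$ and $C\eta(\|\bar u\|_* + \|u\|_*)\|\bar u - u\|_*$. The main obstacle throughout is the spatial-weight bookkeeping in the interpolation step: the choice $\epsilon = \tilde{r}^{-3\gamma - \alpha}$ is delicate, and the constraints $\alpha < \gamma < 1/100$ are precisely what guarantee that the second interpolation term inherits a spatial weight no worse than $\tilde{r}^{-2\gamma}$, which in turn is what forces the product-rule estimates to close at the required exponent $-1-\gamma$ with decay $2\delta$.
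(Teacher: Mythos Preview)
Your proposal is correct and follows essentially the same route as the paper: establish the intermediate bounds $\|u\|_{0,\alpha}^{[-2\gamma],\delta}+\|Du\|_{0,\alpha}^{[-2\gamma],\delta}\leq C\|u\|_*$ via the parabolic interpolation inequalities on $Q(y,\tau)$ with the choice $\epsilon=\tilde r^{-3\gamma-\alpha}$, and then combine these with the sup-norm pieces $\|u\|_0^{[-1+\gamma],\delta}$, $\|Du\|_0^{[-1+\gamma],\delta}$ of $\|u\|_*$ through Lemma~\ref{lem:weighted-decay-holder-multiplication-inequality}. Your term-by-term weight bookkeeping and the telescoping argument for $\mc N(\bar u)-\mc N(u)$ merely make explicit what the paper leaves to the reader.
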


\subsection{Eigenfunction estimates} \label{subsec:eigenfunction-estimates}
Analogous to the Section \ref{subsec:pointwise-estimates}, we will establish weighted H\"older estimates for eigenfunctions of the linear operator $-L$.
\begin{lemma} \label{lem:supnorm-estimate-eigenstate}
    Let $(\xi, \lambda)$ be an eigenpair of $-L$.
    Then, $\xi(y)$ has a growth at most $r^{2\lambda - 1}$, or equivalently
    \begin{align*}
        \xi(y) < C\tilde{r}^{2\lambda - 1}
    \end{align*}
    for some $C = C(\xi, \lambda)$.
    \begin{proof}
        Let $R>0$ be a sufficiently large number to be determined later.
        With the similar calculation as in \eqref{eq:dt-L-estimate}, we have
        \begin{align*}
            -L'\|\xi\|_F \leq 0 \text{ on } \RR^n \setminus B_R
        \end{align*}
        with a linear operator
        \begin{align*}
            L' = \Delta - \frac{1}{2}y \cdot \nabla - \frac{1}{2} + \lambda + \frac{C_0}{r^2}
        \end{align*}
        for some constant $C_0(n)$.
        Take a function $\psi = \alpha (r^{2\lambda - 1} - r^{2\lambda - 2})$ on a domain $\RR^n \setminus B_R$ with
        \begin{align*}
            \alpha = 2R^{-(2\lambda - 1)}\sup_{\partial B_R}\|\xi\|_F.
        \end{align*}
        Define $f = \xi - \psi$ and $f_+ = \max\{f, 0\}$.
        For sufficiently large $R$ depending only on $\lambda$, we have
        \begin{align*}
            f & = \xi - \alpha (r^{2\lambda - 1} - r^{2\lambda - 2}) \leq \xi - \tfrac{1}{2}\alpha r^{2\lambda - 1} \\
              & = \xi - \sup_{\partial B_R}\|\xi\|_F \leq 0 \text{ on } \partial B_R.
        \end{align*}
        Moreover, for sufficiently large $R$ depending only on $n$ and $\lambda$, we get
        \begin{align}
            -L'f & \leq L'\psi \leq \alpha(C_0 + (2\lambda -1)(n + 2\lambda - 3))r^{2\lambda-3}                                       \notag  \\
                 & - \alpha(\tfrac{1}{2}r^{2\lambda - 2} + (C_0 + (2\lambda-2)(n + 2\lambda - 4))r^{2\lambda - 4})                     \notag \\
                 & \leq -\tfrac{1}{4}\alpha r^{2\lambda - 2} \leq 0 \text{ on } \RR^n \setminus B_R, \label{eq:eigenstate-L'-inequality}
        \end{align}
        Similar to \eqref{eq:barrier-tau-derivative-inequality-integrated}, multiply \eqref{eq:eigenstate-L'-inequality} by $f_+$, integrate over $\RR^n \setminus B_R$ gives
        \begin{align*}
            0 \leq \int_{\RR^n \setminus B_R} -|\nabla f_+|^2 \boldsymbol{\rho} dy^n + (\lambda + \frac{C_0}{R^2} - \frac{1}{2})\|f_+\|_{\boldsymbol{\rho}; \RR^n \setminus B_R}^2.
        \end{align*}
        Note that Ecker's Sobolev inequality \cite{Ecker} implies
        \begin{align*}
            \|yf_+\|_{\boldsymbol{\rho}} \leq 4n\|f_+\|_{\boldsymbol{\rho}, 1},
        \end{align*}
        and we thus estimate
        \begin{align*}
            0 & \leq \int_{\RR^n \setminus B_R} -|\nabla f_+|^2 \boldsymbol{\rho} dy^n + (\lambda + \frac{C_0}{R^2} - \frac{1}{2})\|f_+\|_{\boldsymbol{\rho}; \RR^n \setminus B_R}^2       \\
              & \leq \int_{\RR^n \setminus B_R} -\frac{1}{4n}|y f_+|^2\boldsymbol{\rho} dy^n + (\frac{1}{2} + \lambda + \frac{C_0}{R^2})\|f_+\|_{\boldsymbol{\rho}; \RR^n \setminus B_R}^2 \\
              & \leq (-\frac{1}{4n}R^2 + \frac{1}{2} + \lambda + \frac{C_0}{R^2})\|f_+\|_{\boldsymbol{\rho}; \RR^n \setminus B_R}^2.
        \end{align*}
        We get $f_+ \equiv 0$ after taking sufficiently large $R$.
    \end{proof}
\end{lemma}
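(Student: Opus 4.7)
The eigenvalue equation $-L\xi = \lambda \xi$ has the same leading-order structure as the time-dependent inhomogeneous equation \eqref{eq:inhomogeneous-linear-PDE}, but with the first-order term $\tfrac{\partial}{\partial \tau}$ replaced by a spectral shift $+\lambda$. I would therefore mirror the barrier argument of Lemma \ref{lem:supnorm-estimate}\ref{item:-1-supnorm-estimate}: use the refined Kato inequality to reduce the tensor eigenvalue equation to a scalar differential inequality for $\|\xi\|_F$, then construct an explicit polynomial barrier at infinity, and close with a weighted maximum principle on $\RR^n \setminus B_R$.

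\textbf{Scalar reduction via Kato.} Applying Lemma \ref{lem:Kato-inequality} to the eigenvalue equation exactly as in the derivation of \eqref{eq:dt-L-estimate}, the potentially dangerous term $[W_i,\partial_i \xi_j]$ is absorbed by the gradient correction $-\tfrac{1}{n}\sum\|[\partial_i\xi_j,\xi_j]\|_F^2/\|\xi\|_F^3$ supplied by Kato, while the remaining commutators with $W$ and $F_W$ contribute only $O(\tilde{r}^{-2})\|\xi\|_F$ since $|W| = O(1/r)$ and $|F_W|=O(1/r^2)$. The result is
\begin{equation*}
-L'\|\xi\|_F \le 0 \quad \text{on } \RR^n\setminus B_R, \qquad L' := \Delta - \tfrac{1}{2}y\cdot\nabla - \tfrac{1}{2} + \lambda + \tfrac{C_0}{r^2}.
\end{equation*}

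\textbf{Polynomial barrier.} I would try $\psi = \alpha\, r^{2\lambda-1}$. A direct computation shows that the $-\tfrac{1}{2}y\cdot\nabla$ drift and the $-\tfrac{1}{2}+\lambda$ shift conspire to annihilate the leading $r^{2\lambda-1}$-contribution to $L' r^{2\lambda-1}$, so this rate is the borderline growth permitted by $L'$. To upgrade $\psi$ to a genuine supersolution I would add a subleading correction and use $\psi = \alpha(r^{2\lambda-1} - r^{2\lambda-2})$; here the $-r^{2\lambda-2}$ piece produces a nontrivial $-\tfrac{1}{2}r^{2\lambda-2}$ term that dominates the $O(r^{2\lambda-3})$ remainder once $R$ is large. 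Choosing $\alpha = 2R^{-(2\lambda-1)}\sup_{\partial B_R}\|\xi\|_F$ additionally makes $\|\xi\|_F \le \psi$ on $\partial B_R$, so $f:=\|\xi\|_F - \psi$ satisfies $-L'f \le 0$ in the bulk and $f \le 0$ on the boundary.

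\textbf{Closing the maximum principle and main obstacle.} The main obstacle is that the standard maximum principle fails on the unbounded domain $\RR^n\setminus B_R$, and the barrier grows polynomially so has no a priori good behavior at infinity. The remedy, as in Lemma \ref{lem:supnorm-estimate}, is to test $-L'f\le 0$ against $f_+=\max\{f,0\}$ weighted by the Gaussian $\boldsymbol{\rho}$. After integration by parts the boundary contribution vanishes (since $f_+\equiv 0$ on $\partial B_R$), leaving a negative Dirichlet term and a mass term with coefficient $\lambda+\tfrac{1}{2}+C_0/R^2$. To beat the mass term I would invoke Ecker's Sobolev inequality $\|y f_+\|_{\boldsymbol{\rho}}\le 4n\|f_+\|_{\boldsymbol{\rho},1}$, which converts the Dirichlet energy into a weighted $L^2$-norm that dominates the mass term by a factor of $R^2/(16n^2)$ for $R$ large depending on $n$ and $\lambda$. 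This forces $\|f_+\|_{\boldsymbol{\rho};\RR^n\setminus B_R}=0$, hence $\|\xi\|_F \le \psi \le C\tilde{r}^{2\lambda-1}$ globally after absorbing the behavior on $B_R$ into the constant.
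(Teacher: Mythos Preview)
Your proposal is correct and follows essentially the same route as the paper: the Kato reduction to $-L'\|\xi\|_F\le 0$, the barrier $\psi=\alpha(r^{2\lambda-1}-r^{2\lambda-2})$ with the same choice of $\alpha$, and the Gaussian-weighted integration against $f_+$ closed via Ecker's Sobolev inequality are exactly the ingredients the paper uses. The only differences are cosmetic (minor discrepancies in the numerical constants in front of the mass term and in the Ecker factor), and do not affect the argument.
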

The H\"older estimate for eigenfunctions follows from the rescaling argument.
\begin{lemma} \label{lem:eigenfunction-schauder-estimate}
    Let $(\lambda, \xi)$ be an eigenpair of $-L$.
    Then, $\xi \in C^{(2\lambda-1)}_{2, \alpha}(\RR^n; \Omega^1(\mf{so}(n)))$.
    \begin{proof}
        It suffices to estimate the norm of $\xi$ on the complement of the compact ball.
        Define a rescaled function $v(x, t)$ as
        \begin{align*}
            v(x, t) \defeq (-t)^{\lambda - 1/2} \xi(\frac{x}{\sqrt{-t}})
        \end{align*}
        for $t \in [-1, 0)$.
        The eigenfunction equation $-L\xi = \lambda \xi$ becomes
        \begin{align*}
            \frac{\partial v}{\partial t} = \Delta_x v + 2[\hat{W}_i, \frac{\partial}{\partial x^i}v_j] + [\hat{W}_i, [\hat{W}_i, v_j]] + 2[v_i, \hat{F_W}_{ij}]
        \end{align*}
        where we used the rescaled variables $\hat{W}, \hat{F_W}$ defined in \eqref{eq:rescaled-W-F}.
        Since
        \begin{align*}
            W_j(y) = p_j(y) / (a|y|^2 + b)  \quad \text{ and } \quad {F_{W}}_{ij}(y) = q_{ij}(y)/ (a|y|^2 + b)^2
        \end{align*}
        for polynomials $p_j, q_{ij}$ with degree $1$ and $2$ respectively, we obtain the H\"older estimate of coefficients
        \begin{align*}
            \sup_{t \in [0, 1)}\left[\|\hat{W}(\cdot, t)\|_{0, \alpha; B_4 \setminus B_1} + \|\hat{F_W}(\cdot, t)\|_{0, \alpha; B_4 \setminus B_1}\right] \leq C.
        \end{align*}
        Applying Knerr's Schauder estimates(see Theorem \ref{thm:knerr-Schauder-estimate}) on $(B_4 \setminus B_1) \times [-1, 0)$ gives
        \begin{align*}
            \|\xi\|^{(2\lambda - 1)}_{2, \alpha; \RR^n \setminus B_{2\sqrt{2}}} & \leq C\sup_{t \in [-1/2, 0)}\left[\|\xi\|_{2, \alpha; B_{3/\sqrt{-t}} \setminus B_{2/\sqrt{-t}}}^{(2\lambda - 1)}\right] = C\sup_{t \in [-1/2, 0)}\|v(\cdot, t)\|_{2, \alpha; B_{3} \setminus B_{2}}^{(2\lambda - 1)} \\
                                                                                & \leq C\sup_{t \in [-1/2, 0)}\|v(\cdot, t)\|_{2, \alpha; B_{3} \setminus B_{2}} \leq C \|v\|_{0; Q_{1, 4}^{-1, 0}} \leq C \|\xi\|_{0}^{(2\lambda - 1)}. \qedhere
        \end{align*}
    \end{proof}
\end{lemma}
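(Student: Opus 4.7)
The plan is to combine standard interior regularity on a large compact set with a parabolic rescaling argument for the exterior region, exactly mirroring the rescaling scheme already used in Lemma \ref{lem:our-holder-estimate}. Since $-L$ is a linear elliptic operator with smooth coefficients on $\RR^n$, elliptic Schauder theory gives $\|\xi\|_{2,\alpha;\, B_{2\sqrt{2}}} < \infty$ automatically, so the work is entirely in controlling $\xi$ on $\RR^n \setminus B_{2\sqrt{2}}$, with the prescribed growth rate $\tilde r^{2\lambda-1}$.

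The key idea is to promote the eigenfunction equation $-L\xi = \lambda\xi$ to a parabolic equation by the ansatz
\begin{align*}
v(x,t) \defeq (-t)^{\lambda - 1/2}\, \xi\!\left(\tfrac{x}{\sqrt{-t}}\right), \qquad t \in [-1, 0).
\end{align*}
A direct computation (using the drift term $-\tfrac{1}{2}y\cdot\nabla\xi - \tfrac{1}{2}\xi$ in the definition of $L$) shows that $v$ satisfies the forward parabolic equation
\begin{align*}
\tfrac{\partial v_j}{\partial t} = \Delta_x v_j + 2[\hat W_i, \partial_{x^i} v_j] + [\hat W_i, [\hat W_i, v_j]] + 2[v_i, \hat{F_W}_{ij}],
\end{align*}
with $\hat W$, $\hat{F_W}$ as in \eqref{eq:rescaled-W-F}. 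Because $W$ and $F_W$ are explicit rational functions of $|y|$ (with denominators $a|y|^2+b$ and $(a|y|^2+b)^2$), the rescaled coefficients $\hat W(\cdot,t)$ and $\hat{F_W}(\cdot,t)$ have $C^{0,\alpha}$ norms uniformly bounded on the fixed annulus $B_4 \setminus B_1$ for $t \in [-1, 0)$.

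Next, I would invoke a parabolic Schauder estimate (the Knerr version cited as Theorem \ref{thm:knerr-Schauder-estimate}) on the cylinder $(B_4 \setminus B_1) \times [-1, 0)$ to bound $\|v\|_{2,\alpha;\, (B_3 \setminus B_2) \times [-1/2, 0)}$ by $\|v\|_{0;\, (B_4 \setminus B_1) \times [-1,0)}$. The sup-bound $|\xi(y)| \le C \tilde r(y)^{2\lambda - 1}$ from Lemma \ref{lem:supnorm-estimate-eigenstate} translates, under the rescaling, into a uniform bound $\|v\|_0 \le C\|\xi\|_0^{(2\lambda-1)}$ on the annulus, because the prefactor $(-t)^{\lambda-1/2}$ is exactly what cancels the growth weight. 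Finally, for any point $y$ with $|y| \ge 2\sqrt 2$, one chooses $t \in [-1/2, 0)$ so that the point $x = \sqrt{-t}\, y$ lies in $B_3\setminus B_2$, and rewrites the $C^{2,\alpha}$ bound on $v$ at $(x,t)$ as a bound on $\xi$ at $y$, where the rescaling factors produce precisely the weighted H\"older norm $\|\cdot\|^{(2\lambda-1)}_{2,\alpha}$ on the exterior region.

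The principal point requiring care is the bookkeeping of scaling weights: one must verify that the rescaling factors (arising both from $(-t)^{\lambda-1/2}$ and from the $\sqrt{-t}$ factors that appear when differentiating with respect to $x$ versus $y$) combine to reproduce the weight $\tilde r^{2\lambda-1}$ for $\xi$, $\tilde r^{2\lambda-2}$ for $\nabla\xi$, and $\tilde r^{2\lambda-3}$ for $\nabla^2\xi$ and the $\alpha$-H\"older seminorm of $\nabla^2\xi$, which is exactly what $C^{(2\lambda-1)}_{2,\alpha}$ requires. No genuine analytic obstacle appears beyond the computations already carried out in Lemma \ref{lem:our-holder-estimate}; the argument here is strictly simpler because there is neither an inhomogeneous term nor an initial condition to handle.
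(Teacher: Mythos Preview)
Your proposal is correct and follows essentially the same approach as the paper: the identical rescaling $v(x,t)=(-t)^{\lambda-1/2}\xi(x/\sqrt{-t})$, the same reduction to a parabolic equation with uniformly $C^{0,\alpha}$ coefficients on a fixed annulus, and the same application of Knerr's Schauder estimate together with the sup-bound from Lemma \ref{lem:supnorm-estimate-eigenstate}. Your write-up is in fact slightly more explicit about invoking Lemma \ref{lem:supnorm-estimate-eigenstate} and about the scaling bookkeeping, but the logic is the same.
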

\begin{corollary} \label{cor:positive-projection-preserves-para-holderness}
    Let $d \geq -1$.
    Then, the projection $\Pi_{>0}$ maps $C^{(d)}_{2, \alpha}(\RR^n; \Omega^1(\mf{so}(n)))$ into itself.
    Moreover, $\Pi_{>0}$ is a Lipschitz map. \ie For $u \in C^{(d)}_{2, \alpha}(\RR^n; \Omega^1(\mf{so}(n)))$,
    \begin{align*}
        \|\Pi_{>0} u\|_{2, \alpha}^{(d)} \leq C(d)\|u\|_{2, \alpha}^{(d)}.
    \end{align*}
    \begin{proof}
        By Lemma \ref{lem:eigenfunction-schauder-estimate},
        \begin{align*}
            \|\Pi_{>0} u\|_{2, \alpha}^{(d)} & \leq \|u\|_{2, \alpha}^{(d)} + \sum_{i=1}^{I} \|\Pi_{=i} u\|_{2, \alpha}^{(d)}                                                    \\
                                             & = \|u\|_{2, \alpha}^{(d)} + \sum_{i=1}^{I} \|u\|_{\boldsymbol{\rho}}\|\xi_i\|_{2, \alpha}^{(-1)} \leq C(d)\|u\|_{2, \alpha}^{(d)}
        \end{align*}
        gives the result.
    \end{proof}
\end{corollary}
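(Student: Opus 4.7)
The plan is to write $\Pi_{>0} = I - \Pi_{\leq 0}$ and exploit the fact that $\Pi_{\leq 0}$ has finite-dimensional range. Specifically,
\begin{align*}
\Pi_{>0} u = u - \sum_{i=1}^{I} \inn{u, \xi_i}_{\boldsymbol{\rho}}\, \xi_i,
\end{align*}
so by the triangle inequality it suffices to bound each term $\|\inn{u, \xi_i}_{\boldsymbol{\rho}}\, \xi_i\|_{2,\alpha}^{(d)}$ uniformly in terms of $\|u\|_{2,\alpha}^{(d)}$.

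The first step is to control the coefficient $\inn{u, \xi_i}_{\boldsymbol{\rho}}$. By Cauchy--Schwarz and the $L^2_{\boldsymbol{\rho}}$-orthonormality of the eigenfunctions, one has $|\inn{u, \xi_i}_{\boldsymbol{\rho}}| \leq \|u\|_{\boldsymbol{\rho}}$. Since $|u(y)| \leq \|u\|_{2,\alpha}^{(d)} \tilde{r}(y)^d$ pointwise by the definition of the weighted H\"older norm, the Gaussian weight absorbs the polynomial growth:
\begin{align*}
\|u\|_{\boldsymbol{\rho}}^2 \leq \left(\|u\|_{2,\alpha}^{(d)}\right)^2 \int_{\RR^n} \tilde{r}(y)^{2d}\, e^{-|y|^2/4}\, dy \leq C(d)\left(\|u\|_{2,\alpha}^{(d)}\right)^2.
\end{align*}

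The second step is to bound $\|\xi_i\|_{2,\alpha}^{(d)}$ by a constant depending only on $i$ and $d$. Lemma~\ref{lem:eigenfunction-schauder-estimate} gives $\xi_i \in C^{(2\lambda_i - 1)}_{2,\alpha}$. For each $i \leq I$ we have $\lambda_i \leq 0$, hence $2\lambda_i - 1 \leq -1 \leq d$, and the inclusion $C^{(2\lambda_i-1)}_{2,\alpha} \hookrightarrow C^{(d)}_{2,\alpha}$ (which is immediate since $\tilde{r} \geq 1$) yields a finite constant $\|\xi_i\|_{2,\alpha}^{(d)} \leq \|\xi_i\|_{2,\alpha}^{(-1)} \leq C_i$.

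Combining the two bounds and summing over the finite set $i = 1, \dots, I$ produces the desired estimate with $C(d) = 1 + I \max_i C_i \cdot C(d)^{1/2}$. There is no genuine obstacle here: the whole argument reduces to finite-dimensional considerations once Lemma~\ref{lem:eigenfunction-schauder-estimate} provides the H\"older regularity and decay of each $\xi_i$ and the Gaussian integral is observed to be finite; the hypothesis $d \geq -1$ is used only to ensure that the decay rate $2\lambda_i - 1 \leq -1$ of non-positive eigenfunctions is at least as strong as the growth $d$ allowed for $u$.
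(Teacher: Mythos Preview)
Your proof is correct and follows essentially the same route as the paper: write $\Pi_{>0}=I-\Pi_{\leq 0}$, bound each coefficient $|\langle u,\xi_i\rangle_{\boldsymbol{\rho}}|$ by $\|u\|_{\boldsymbol{\rho}}\leq C(d)\|u\|_{2,\alpha}^{(d)}$, and invoke Lemma~\ref{lem:eigenfunction-schauder-estimate} together with $2\lambda_i-1\leq -1\leq d$ to control $\|\xi_i\|_{2,\alpha}^{(d)}$. The paper records the same chain of inequalities in two compressed lines; you have simply spelled out each step (the Gaussian integral for $\|u\|_{\boldsymbol{\rho}}$ and the monotone inclusion $C^{(-1)}_{2,\alpha}\hookrightarrow C^{(d)}_{2,\alpha}$) in more detail.
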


\begin{lemma} \label{lem:linear-parabolic-PDE-*norm-estimate}
    Let $0 < \delta < \min\{\lambda_{I+1}, \gamma/2\}$ and $-1 \leq d \leq -1 + \gamma$.
    Let $\tilde{v}_0 \in \Omega^1(\mf{so}(n))$ with $\|\tilde{v}_0\|_{2, \alpha}^{(d)}<\infty$.
    Then, there exists a solution $v$ to a homogeneous linear parabolic PDE
    \begin{align*}
        \frac{\partial v}{\partial \tau} = Lu
    \end{align*}
    with an initial condition $v(\cdot, 0) = \Pi_{>0} \tilde{v}_0$.
    Furthermore, we have
    \begin{align*}
        \|v\|_* \leq C(d)\|\tilde{v}_0\|_{2, \alpha}^{(d)}.
    \end{align*}
    \begin{proof}
        Let $v_0 = \Pi_{>0} \tilde{v}_0$ and $\bar{v} = v_0 e^{-2\lambda_{I+1}\tau}$.
        By direct calculation, we have
        \begin{align} \label{eq:bar-v-holder-estimate}
            \|(L - \frac{\partial}{\partial \tau})\bar{v}\|^{[d], 2\lambda_{I+1}}_{0, \alpha} \leq C\|v_0\|_{2, \alpha}^{(d)},
        \end{align}
        and this implies
        \begin{align*}
            \int_0^\infty |e^{\delta_1 \tau}\|((\frac{\partial}{\partial \tau} - L)\bar{v})(\cdot, \tau)\|_\rho|^2 d\tau & < C (\|v_0\|^{(d)}_{2, \alpha})^2
        \end{align*}
        for some $\delta_1 \in (\delta, \lambda_{I+1})$.
        Lemma \ref{lem:existence-unique-sol-parabolic-PDE} implies that there exists a unique solution $v$ to the equations
        \begin{align*}
             & \frac{\partial (v - \bar{v})}{\partial \tau} = L(v - \bar{v}) - (\frac{\partial}{\partial \tau} - L)\bar{v} \quad\text{ or }\quad \frac{\partial v}{\partial \tau} = L v, \\
             & \Pi_{>0}((v - \bar{v})(\cdot, 0)) = 0,
        \end{align*}
        satisfying
        \begin{align}
            \int_0^\infty |e^{\delta \sigma} (\|v(\cdot, \sigma)\|_{\boldsymbol{\rho}} + \|\frac{\partial v}{\partial \sigma}(\cdot, \sigma)\|_{\boldsymbol{\rho}})|^2 d\sigma < \infty, \notag \\
            e^{\delta\tau}\|v(\cdot, \tau)\|_{\boldsymbol{\rho}} < C \|v_0\|_{2, \alpha}^{(d)}. \label{eq:bar-v-rho-estimate}
        \end{align}
        for all $\tau \in \RR_+$.
        Since inequality \eqref{eq:bar-v-rho-estimate} gives exponential decay of $\|v(\cdot, \tau)\|_{\boldsymbol{\rho}}$, we have $\Pi_{\leq 0} v = 0$.
        Hence,
        \begin{align*}
            v(y, 0) = \bar{v}(y, 0) = v_0(y, 0)
        \end{align*}
        for all $y \in \RR^n$.
        Applying Lemma \ref{lem:weighted-holder-estimate-on-cpt} to $(u, h) = (v - \bar{v}, -(\frac{\partial}{\partial \tau} - L)\bar{v})$ with \eqref{eq:bar-v-holder-estimate} gives an interior estimate
        \begin{align*}
            \|v\|_{2, \alpha; Q_R}^{\delta} & \leq \|v - \bar{v}\|_{2, \alpha; Q_R}^{\delta} + \|\bar{v}\|_{2, \alpha; Q_R}^{\delta}                              \\
                                            & \leq C(R)(\|(\frac{\partial}{\partial \tau} - L)\bar{v}\|^{2\lambda_{I+1}}_{0, \alpha} + \|v_0\|_{2, \alpha}^{(d)}) \\
                                            & \leq C(R) \|v_0\|_{2, \alpha}^{(d)}.
        \end{align*}
        for sufficiently large $R$.
        Plugging Lemma \ref{lem:supnorm-estimate} \ref{item:exp-decay-supnorm-estimate} with $(u, h, d, \delta) = (v, 0, -1 + \gamma, \delta)$, we obtain
        \begin{align*}
            \|v\|_0^{[-1 + \gamma], \delta} \leq C \|v_0\|_{2, \alpha}^{(d)}.
        \end{align*}
        Hence, Corollary \ref{cor:our-norm-estimate-from-h-u0-u-norm} implies applied to $(u, h) = (v, 0)$ and Corollary \ref{cor:positive-projection-preserves-para-holderness} implies
        \begin{align*}
            \|v\|_* & \leq C (\|v(\cdot, 0)\|_{2, \alpha}^{[-1 + \gamma]} + \|v\|_0^{[-1 + \gamma], \delta}) \\
                    & \leq C\|v_0\|_{2, \alpha}^{(d)} \leq C(d) \|\tilde{v}_0\|_{2, \alpha}^{(d)}. \qedhere
        \end{align*}
    \end{proof}
\end{lemma}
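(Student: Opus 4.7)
The plan is to construct $v$ as a small correction to an ansatz that already carries the correct positive-eigenspace initial data, and then bootstrap from an $L^2_{\boldsymbol{\rho}}$ solution (via Lemma \ref{lem:existence-unique-sol-parabolic-PDE}) to the full $\|\cdot\|_*$ estimate using the weighted Hölder machinery developed in Section \ref{subsec:pointwise-estimates}. First I would set $v_0 := \Pi_{>0}\tilde{v}_0$ and observe, by Corollary \ref{cor:positive-projection-preserves-para-holderness}, that $\|v_0\|_{2,\alpha}^{(d)} \leq C(d)\|\tilde{v}_0\|_{2,\alpha}^{(d)}$, so the initial data is already controlled in the correct weighted Hölder class.

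The ansatz is $\bar{v}(y,\tau) := v_0(y)\, e^{-2\lambda_{I+1}\tau}$, which matches the desired initial data exactly and is exponentially decaying. A direct calculation shows that $(\partial_\tau - L)\bar{v}$ can be controlled in $\|\cdot\|^{[d],2\lambda_{I+1}}_{0,\alpha}$ purely in terms of $\|v_0\|_{2,\alpha}^{(d)}$, since $L$ has coefficients whose growth is matched by the weight, and the time derivative contributes only a bounded multiplicative factor. In particular, this inhomogeneous term is in $L^2(\RR_+; L^2_{\boldsymbol{\rho}})$ with exponential weight $e^{\delta_1\tau}$ for any $\delta_1 \in (\delta, \lambda_{I+1})$, so Lemma \ref{lem:existence-unique-sol-parabolic-PDE} yields a unique strong $L^2_{\boldsymbol{\rho}}$ solution $w$ of $(\partial_\tau - L)w = -(\partial_\tau - L)\bar{v}$ with $\Pi_{>0}(w(\cdot,0)) = 0$ and exponential $L^2_{\boldsymbol{\rho}}$-decay. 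Setting $v := \bar{v} + w$ then gives a solution of the homogeneous equation $\partial_\tau v = Lv$.

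To identify the initial trace, I would exploit the exponential $L^2_{\boldsymbol{\rho}}$-decay: since $v$ solves a homogeneous equation and decays exponentially in $L^2_{\boldsymbol{\rho}}$, its spectral decomposition $v^j(\tau)\xi_j$ satisfies $v^j(\tau) = v^j(0)\, e^{-\lambda_j\tau}$, and decay forces $\Pi_{\leq 0}v \equiv 0$. Hence $v(\cdot,0) = \Pi_{>0}v(\cdot,0) = \Pi_{>0}\bar{v}(\cdot,0) = \Pi_{>0}v_0 = v_0$, as required. For the $\|\cdot\|_*$ estimate I would first apply Lemma \ref{lem:weighted-holder-estimate-on-cpt} to $v - \bar{v}$ on a large fixed cylinder $Q_R$ to get local exponential-decay Hölder bounds controlled by $\|v_0\|_{2,\alpha}^{(d)}$; add the pointwise bound for $\bar{v}$ itself to obtain $\|v\|_{2,\alpha;Q_R}^{\delta} \leq C(R)\|v_0\|_{2,\alpha}^{(d)}$. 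Then Lemma \ref{lem:supnorm-estimate}\ref{item:exp-decay-supnorm-estimate}, applied to $(v,0)$ with weight exponent $d' = -1+\gamma$ and time-decay $\delta$, promotes this compact estimate to the global sup-norm bound $\|v\|_0^{[-1+\gamma],\delta} \leq C\|v_0\|_{2,\alpha}^{(d)}$. Finally, Corollary \ref{cor:our-norm-estimate-from-h-u0-u-norm} applied to $(v,0)$ repackages the initial-data and sup-norm bounds into the full $\|\cdot\|_*$ estimate.

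The main obstacle I anticipate is the control of the global spatial growth captured by the $[-1+\gamma]$ weight: the inhomogeneity $(\partial_\tau - L)\bar{v}$ is only controlled with the $[d]$-weight coming from $v_0$, and on the positive-eigenspace side individual eigenfunctions can genuinely grow like $r^{2\lambda - 1}$ (Lemma \ref{lem:supnorm-estimate-eigenstate}), so crude bounds on $w$ in $L^2_{\boldsymbol{\rho}}$ do not by themselves yield the pointwise weighted decay. This is precisely why the barrier argument based on the refined Kato inequality (Lemma \ref{lem:supnorm-estimate}) is essential — it converts the $L^2_{\boldsymbol{\rho}}$-decay of $v$ into a uniform pointwise $[-1+\gamma]$-weighted estimate with time decay $\delta$, after which the Schauder-type Lemma \ref{lem:our-holder-estimate} (packaged as Corollary \ref{cor:our-norm-estimate-from-h-u0-u-norm}) closes the estimate.
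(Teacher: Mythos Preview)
Your proposal is correct and follows essentially the same approach as the paper's proof: the same ansatz $\bar v = v_0 e^{-2\lambda_{I+1}\tau}$, the same invocation of Lemma~\ref{lem:existence-unique-sol-parabolic-PDE} to build the correction, the same spectral argument to identify the initial trace, and the same chain Lemma~\ref{lem:weighted-holder-estimate-on-cpt} $\to$ Lemma~\ref{lem:supnorm-estimate}\ref{item:exp-decay-supnorm-estimate} $\to$ Corollary~\ref{cor:our-norm-estimate-from-h-u0-u-norm} (with Corollary~\ref{cor:positive-projection-preserves-para-holderness}) to close the $\|\cdot\|_*$ estimate.
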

\section{Main theorem} \label{sec:main-theorem}
We fix $\delta \in (0, \min\{\gamma/2, \lambda_{I+1}\})$, and $-1 \leq d \leq -1 + \gamma$.
Define a map
\begin{align*}
    \iota_+: C_{2, \alpha}^{(d)}(\Omega^1(\mf{so}(n))) & \to \Omega^1(\mf{so}(n)) \times \RR_+ \\
    \tilde{v}_0                                        & \mapsto v
\end{align*}
where $v$ is the solution we obtained in the Lemma \ref{lem:linear-parabolic-PDE-*norm-estimate}.
\begin{theorem} \label{thm:rescaled-de-turck-main-thm}
    There exists $\mu_0 = \mu_0(\delta, \alpha)$ such that, for every $\mu > \mu_0$, there exists a corresponding $\epsilon = \epsilon(\alpha, \mu) > 0$ with the following property:
    For any $\tilde{u}_0 \in \Omega^1(\mf{so}(n))$ with $\|\tilde{u}_0\|_{2, \alpha}^{(d)} < \epsilon$, there exists a unique smooth solution $\mc T(\tilde{u}_0) \in \Omega^1(\mf{so}(n)) \times \RR_+$ to the rescaled Yang--Mills de-Turck flow equation \eqref{eq:transformed-YM-flow-3} with a priori decay
    \begin{align*}
        \|\mc T(\tilde{u}_0) - \iota_{+}(\tilde{u}_0)\|^{[-1 + \gamma], \delta}_{0} \leq \mu (\|\tilde{u}_0\|_{2, \alpha}^{(d)})^2
    \end{align*}
    and the initial condition $\Pi_{>0}\mc T(\tilde{u}_0)(\cdot, 0) = \Pi_+(\tilde{u}_0)$.
    Moreover, if $d = -1$, we have a decay of the solution in the spatial direction:
    \begin{align*}
        \|\mc T(\tilde{u}_0) - \iota_+(\tilde{u}_0)\|_{0}^{[-1]} \leq C(\|\tilde{u}_0\|_{2, \alpha}^{(-1)})^2.
    \end{align*}
    \begin{proof}
        Consider the Banach space
        \begin{align*}
            \mc C[\tilde{u}_0] \defeq \Set{u \in \Omega^1(\mf{so}(n)) \times \RR_+ \given \|u\|_{*} < \infty, \Pi_{>0}(u - \tilde{u}_0) = 0}.
        \end{align*}
        From Lemma \ref{lem:linear-parabolic-PDE-*norm-estimate}, we have $\|\iota_+(\tilde{u}_0)\|_* \leq C\|\tilde{u}_0\|_{2, \alpha}^{(d)}$.
        Let $\eta > 0$ be the constant in Lemma \ref{lem:nonlinear-estimate}.
        For $u \in \mc C[\tilde{u}_0]$ with $\|u\|_{*} < \eta$, the nonlinear estimate in Lemma \ref{lem:nonlinear-estimate} implies
        \begin{align} \label{eq:nonlinear-estimate-thm}
            \|\mc N(u)\|_{0, \alpha}^{[-1 - \gamma], 2\delta} \leq C(\|u\|_*)^2.
        \end{align}
        Hence, there is a unique solution $\mc T(u; \tilde{u}_0)$ of
        \begin{align*}
            (\frac{\partial}{\partial \tau} - L)(\mc T(u; \tilde{u}_0) - \iota_+(\tilde{u}_0)) = \mc N(u), \quad \Pi_{>0}(\mc T(u; \tilde{u}_0) - \iota_+(\tilde{u}_0)) = 0
        \end{align*}
        by Lemma \ref{lem:existence-unique-sol-parabolic-PDE}.
        From \eqref{eq:u-j-for-j-leq-I}, we have
        \begin{align*}
            \langle (\mc T(u; \tilde{u}_0) - \iota_+(\tilde{u}_0))(\cdot, 0), \xi_j\rangle_{\boldsymbol{\rho}} = \int_0^\infty e^{\lambda_j \sigma} \|\mc N(u)\|_{\boldsymbol{\rho}} d\sigma \leq C (\|u\|_*)^2
        \end{align*}
        for $1 \leq j \leq I$.
        Since $\|\cdot\|_{2, \alpha}^{(-1)}$-norm of eigenfunctions associated with non-positive eigenvalues are bounded in Lemma \ref{lem:eigenfunction-schauder-estimate}, we obtain an estimate of the initial boundary:
        \begin{align} \label{eq:tu-uj-t-eq-0-estimate}
            \|(\mc T(u; \tilde{u}_0) - \iota_+(\tilde{u}_0))(\cdot, 0)\|_{2, \alpha}^{(-1)} \leq C (\|u\|_*)^2.
        \end{align}
        Applying Lemma \ref{lem:weighted-holder-estimate-on-cpt} with $(u, h) = (\mc T(u; \tilde{u}_0) - \iota_+(\tilde{u}_0), \mc N(u))$ gives an parabolic H\"older estimate on the cylinder $B_R \times \RR_+$ for some $R>1$:
        \begin{align}
            \|\mc T(u; \tilde{u}_0) - \iota_+(\tilde{u}_0)\|_{2, \alpha; B_R \times \RR_+}^{\delta} & \leq C (\|\mc N(u)\|_{0, \alpha}^{2\delta} + \|(\mc T(u; \tilde{u}_0) - \iota_+(\tilde{u}_0))(\cdot, 0)\|_{2, \alpha; B_{R+1}}) \notag \\
                                                                                                    & \leq C (\|u\|_*)^2.\label{eq:tu-u-cpt-supnorm-estimate}
        \end{align}
        where we used \eqref{eq:nonlinear-estimate-thm} and \eqref{eq:tu-uj-t-eq-0-estimate}.
        The global supnorm estimate is achieved from Lemma \ref{lem:supnorm-estimate} \ref{item:exp-decay-supnorm-estimate} based on \eqref{eq:nonlinear-estimate-thm}, \eqref{eq:tu-u-cpt-supnorm-estimate} and \eqref{eq:tu-uj-t-eq-0-estimate} as
        \begin{align}
            \|(\mc T(u; \tilde{u}_0) - \iota_+(\tilde{u}_0))\|_{0}^{[-1+\gamma], \delta} & \leq C(\|\mc N(u)\|_{0}^{[-1 + \gamma], \delta} + \|\mc T(u; \tilde{u}_0) - \iota_+(\tilde{u}_0)\|_{0; Q_R}^{[-1 + \gamma], \delta} \notag \\
                                                                                         & \quadn{5}+ \|(\mc T(u; \tilde{u}_0) - \iota_+(\tilde{u}_0))(\cdot, 0)\|^{[-1 + \gamma]}_0) \notag                                          \\
                                                                                         & \leq C(\|u\|_*)^2.\label{eq:tu-u-supnorm-estimate}
        \end{align}
        for $\tau \in \RR_+$.
        Finally, Corollary \ref{cor:our-norm-estimate-from-h-u0-u-norm} and inequalities \eqref{eq:tu-uj-t-eq-0-estimate}, \eqref{eq:nonlinear-estimate-thm}, and \eqref{eq:tu-u-supnorm-estimate} imply for $\tau \in \RR_+$,
        \begin{align}
            \|\mc T(u; \tilde{u}_0) - \iota_+(\tilde{u}_0)\|_{*} & \leq C (\|(\mc T(u; \tilde{u}_0) - \iota_+(\tilde{u}_0))(\cdot, 0)\|_{2, \alpha}^{[-1+\gamma]}+ \|\mc N(u)\|_{0, \alpha}^{[-1 + \gamma - \alpha], \delta}                                                                                             \notag \\
                                                                 & \quadn{8}+ \|\mc T(u; \tilde{u}_0) - \iota_+(\tilde{u}_0)\|_0^{[-1+\gamma], \delta}) \notag                                                                                                                                                                  \\
                                                                 & \leq C (\|u\|_*)^2.\label{eq:tu-u-star-estimate}
        \end{align}
        Thus, $\mc T$ maps small ball in $\mc C[\tilde{u}_0]$ to itself.
        Likewise, for $\bar{u} \in \mc C[\tilde{u}_0], \|\bar{u}\|_* \leq \eta$, we have
        \begin{align*}
             & (\frac{\partial}{\partial \tau} - L)(\mc T(\bar{u}; \tilde{u}_0) - \mc T(u; \tilde{u}_0)) = \mc N(\bar{u}) - \mc N(u), \\
             & \quadn{4}\Pi_{>0}(\mc T(\bar{u}; \tilde{u}_0) - \mc T(u; \tilde{u}_0))(\cdot, 0) = 0.
        \end{align*}
        With the same discussion above applies with $\bar{u} - u$ in place of $u - \iota_+(\tilde{u}_0)$, we have
        \begin{align} \label{eq:tu-baru-star-estimate}
            \|\mc T(\bar{u}; \tilde{u}_0) - \mc T(u; \tilde{u}_0)\|_{*} \leq C (\|\bar u\|_{*} + \|u\|_{*}) \|\bar{u} - u\|_{*}.
        \end{align}
        Consider the subset $X \defeq \Set{u \in \mc C[\tilde{u}_0] \given \|u - \iota_+(\tilde{u}_0)\|_* \leq \mu (\|\tilde{u}_0\|_{2, \alpha}^{(d)})^2}$.
        There exists $\mu_0 = \mu_0(\delta, \alpha)$ such that for every $\mu > \mu_0$, there exists a corresponding $\epsilon = \epsilon(\alpha, \mu) > 0$ with the following property:
        For any $\tilde{u}_0 \in C_{2, \alpha}^{(d)}(\RR^n; \Omega^1(\mf{so}(n)))$ with $\|\tilde{u}_0\|_{2, \alpha}^{(d)} < \epsilon$ and $u \in X$ imply $\mc T(u; \tilde{u}_0) \in X$ by triangular inequality, \eqref{eq:tu-u-star-estimate} and Lemma \ref{lem:linear-parabolic-PDE-*norm-estimate}.

        By \eqref{eq:tu-baru-star-estimate} $\mc T$ is a contraction map on $X$.
        By the completeness of $X$, there is a fixed point $\mc T(\tilde{u}_0) \in X$ satisfying the rescaled Yang--Mills de-Turck flow equation \eqref{eq:transformed-YM-flow-3}.
        We get the smoothness of $\mc T(\tilde{u}_0)$ from the standard parabolic Schauder bootstrapping.

        If $d = -1$, Lemma \ref{lem:supnorm-estimate} \ref{item:-1-supnorm-estimate} gives the optimal decay of $\mc T(\tilde{u}_0) - \iota_+(\tilde{u}_0)$ in the spatial direction as
        \begin{align*}
            \|\mc T(\tilde{u}_0) - \iota_+(\tilde{u}_0)\|_0^{[-1]} & \leq C(n, \gamma)(\|\mc T(\tilde{u}_0) - \iota_+(\tilde{u}_0)\|^{[-1]}_{0; Q_R} + \|\mc N(\mc T(\tilde{u}_0))\|_{0}^{[-1-\gamma], 0} \\
                                                                   & \quadn{8} + \|(\mc T(\tilde{u}_0) - \iota_+(\tilde{u}_0))(\cdot, 0)\|_0^{[-1]})                                                      \\
                                                                   & \leq C(n, \gamma)(\|\tilde{u}_0\|_{2, \alpha}^{(-1)})^2. \qedhere
        \end{align*}
    \end{proof}
\end{theorem}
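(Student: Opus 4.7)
My plan is to construct the solution $\mc T(\tilde{u}_0)$ via a Banach fixed-point argument in the space
\[
    \mc C[\tilde{u}_0] \defeq \Set{u \in \Omega^1(\mf{so}(n)) \times \RR_+ \given \|u\|_* < \infty,\ \Pi_{>0}(u(\cdot,0) - \tilde{u}_0) = 0},
\]
equipped with the norm $\|\cdot\|_*$. The natural mapping to iterate sends $u \in \mc C[\tilde{u}_0]$ to the solution $\mc T(u;\tilde{u}_0)$ of the inhomogeneous linear equation $(\partial_\tau - L)(\mc T(u;\tilde{u}_0) - \iota_+(\tilde{u}_0)) = \mc N(u)$, where the inhomogeneity $\mc N(u)$ is frozen from the previous iterate. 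Existence and uniqueness of such a solution with $\Pi_{>0}(\mc T(u;\tilde{u}_0) - \iota_+(\tilde{u}_0))(\cdot,0) = 0$ is provided by Lemma \ref{lem:existence-unique-sol-parabolic-PDE} applied to $h = \mc N(u)$ (the integrability hypothesis there is supplied by Lemma \ref{lem:nonlinear-estimate} with exponent $2\delta > \delta$).

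The core of the proof is then the quantitative self-map and contraction estimates. First, I would bound the initial data component $(\mc T(u;\tilde{u}_0) - \iota_+(\tilde{u}_0))(\cdot,0)$ in $\|\cdot\|_{2,\alpha}^{(-1)}$ by combining the explicit formula \eqref{eq:u-j-for-j-leq-I} for the negative-mode coefficients with the nonlinear estimate and the eigenfunction Hölder bound (Lemma \ref{lem:eigenfunction-schauder-estimate}), giving an $O(\|u\|_*^2)$ bound. Next, I would apply the interior weighted Hölder estimate (Lemma \ref{lem:weighted-holder-estimate-on-cpt}) on a fixed large ball $B_R$, then the supnorm decay estimate Lemma \ref{lem:supnorm-estimate}\ref{item:exp-decay-supnorm-estimate} to upgrade this to the global $\|\cdot\|_0^{[-1+\gamma],\delta}$ norm, and finally Corollary \ref{cor:our-norm-estimate-from-h-u0-u-norm} to close the loop and control the full $\|\cdot\|_*$ norm by $O(\|u\|_*^2)$. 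Combining with the bound $\|\iota_+(\tilde{u}_0)\|_* \leq C\|\tilde{u}_0\|_{2,\alpha}^{(d)}$ from Lemma \ref{lem:linear-parabolic-PDE-*norm-estimate}, this shows that for $\mu$ large enough and $\epsilon$ small enough, the ball $X = \{u \in \mc C[\tilde{u}_0] : \|u - \iota_+(\tilde{u}_0)\|_* \leq \mu(\|\tilde{u}_0\|_{2,\alpha}^{(d)})^2\}$ is preserved by $\mc T(\cdot;\tilde{u}_0)$.

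The contraction property is obtained by running the same chain of estimates on the difference $\mc T(\bar u;\tilde{u}_0) - \mc T(u;\tilde{u}_0)$, which satisfies the same linear equation with right-hand side $\mc N(\bar u) - \mc N(u)$ and vanishing positive projection at $\tau=0$; here the Lipschitz bound in Lemma \ref{lem:nonlinear-estimate} yields $\|\mc T(\bar u;\tilde{u}_0) - \mc T(u;\tilde{u}_0)\|_* \leq C(\|\bar u\|_*+\|u\|_*)\|\bar u - u\|_*$, which is a genuine contraction once $\mu\epsilon$ is small. The Banach fixed-point theorem then produces the unique solution $\mc T(\tilde{u}_0) \in X$. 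Smoothness follows from standard parabolic Schauder bootstrapping, since the $\|\cdot\|_*$ bound provides Hölder regularity that can be iterated through $\mc N$.

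The main obstacle I anticipate is the bookkeeping between the many weighted norms: the nonlinear term $\mc N(u)$ lives naturally in $\|\cdot\|_{0,\alpha}^{[-1-\gamma],2\delta}$ while the initial data and the global supnorm are measured with weights $[-1+\gamma]$, and the linear theory gives output in $\|\cdot\|_{2,\alpha}^{[\gamma+\alpha],\delta}$. Verifying that the three estimates (interior Hölder on $B_R$, global supnorm, and full $\|\cdot\|_*$) can be chained without loss and that the squared $\|u\|_*^2$ on the right beats $\mu\|\tilde{u}_0\|_{2,\alpha}^{(d)}$ inside $X$ is where the choices of $\mu_0$ and $\epsilon$ are forced. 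The improved spatial decay when $d=-1$ then comes as a byproduct: applying Lemma \ref{lem:supnorm-estimate}\ref{item:-1-supnorm-estimate} to the fixed point $u = \mc T(\tilde{u}_0)$, the inhomogeneity $\mc N(\mc T(\tilde{u}_0))$ belongs to $\|\cdot\|_0^{[-1-\gamma]}$ thanks to the $[-1]$ decay of the initial data, and the resulting barrier argument produces the $\|\cdot\|_0^{[-1]}$ bound claimed.
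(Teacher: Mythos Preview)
Your proposal is correct and follows essentially the same route as the paper's proof: you set up the fixed-point map $\mc T(\cdot;\tilde{u}_0)$ via Lemma \ref{lem:existence-unique-sol-parabolic-PDE}, bound the initial data using \eqref{eq:u-j-for-j-leq-I} and Lemma \ref{lem:eigenfunction-schauder-estimate}, chain Lemma \ref{lem:weighted-holder-estimate-on-cpt}, Lemma \ref{lem:supnorm-estimate}\ref{item:exp-decay-supnorm-estimate}, and Corollary \ref{cor:our-norm-estimate-from-h-u0-u-norm} to close in $\|\cdot\|_*$, and then run the same chain on differences for the contraction; the $d=-1$ improvement via Lemma \ref{lem:supnorm-estimate}\ref{item:-1-supnorm-estimate} is also handled as in the paper. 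The only very minor imprecision is in the last sentence: the inhomogeneity $\mc N(\mc T(\tilde{u}_0))$ already lies in $\|\cdot\|_0^{[-1-\gamma]}$ by Lemma \ref{lem:nonlinear-estimate} regardless of $d$, and the role of $d=-1$ is rather to guarantee the $\|(\mc T(\tilde{u}_0)-\iota_+(\tilde{u}_0))(\cdot,0)\|_0^{[-1]}$ term required by Lemma \ref{lem:supnorm-estimate}\ref{item:-1-supnorm-estimate}.
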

We get the main theorem as a corollary.
\begin{theorem} \label{thm:main-theorem}
    There exist constants $\epsilon, M, \delta > 0$ such that the following holds.
    For any $\tilde{A}_0 \in C_{2, \alpha}^{(d)}(\Omega^1(\mf{so}(n)))$ with $\|\tilde{A}_0\|_{2, \alpha}^{(d)} < \epsilon$,
    there exists a smooth solution $A(x, t)$ to the Yang--Mills flow equation \eqref{eq:YM-flow}, which blowups at $t = 1$, with the initial condition:
    \begin{align*}
        \Pi_{>0}(A(\cdot, 0) - W - \tilde{A}_0) = 0.
    \end{align*}
    Furthermore, there exists a gauge transformation $S(t)$, such that the gauge transformation of the solution $S^*A(x, t)$ converges to $\bf{W}$ in the following sense:
    \begin{align*}
        \frac{\|(\bf{W}(\cdot, t) - S(t)^*A)(\cdot, t)\|_{L^\infty}}{\|\bf{W}(\cdot, t)\|_{L^\infty}} < M \|\tilde{A}_0\|^{(d)}_{2, \alpha} (1-t)^{\delta}
    \end{align*}
    for all $t \in [0, \infty)$.
\end{theorem}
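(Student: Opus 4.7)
The plan is to reduce Theorem \ref{thm:main-theorem} to the already-established Theorem \ref{thm:rescaled-de-turck-main-thm} by undoing, in order, the similarity coordinates and the de-Turck gauge-fixing that were used to derive \eqref{eq:transformed-YM-flow-3}. First, I apply Theorem \ref{thm:rescaled-de-turck-main-thm} with $\tilde{u}_0 \defeq \tilde{A}_0$: when $\|\tilde{A}_0\|_{2,\alpha}^{(d)} < \epsilon$, this produces a smooth $u = \mc T(\tilde{A}_0)$ solving the rescaled Yang--Mills de-Turck flow \eqref{eq:transformed-YM-flow-3}, with $\Pi_{>0}(u(\cdot,0)) = \Pi_{>0}(\tilde{A}_0)$ and $\|u\|_{*} \leq C\|\tilde{A}_0\|_{2,\alpha}^{(d)}$ via the linear estimate in Lemma \ref{lem:linear-parabolic-PDE-*norm-estimate}. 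Since $\tilde{r}(y) \geq 1$, the definition of $\|\cdot\|_{*}$ yields the pointwise decay
\begin{align*}
|u(y,\tau)| \leq \|u\|_0^{[-1+\gamma],\delta}\, \tilde{r}(y)^{-(1-\gamma)} e^{-\delta \tau} \leq C\|\tilde{A}_0\|_{2,\alpha}^{(d)} e^{-\delta \tau}.
\end{align*}

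Next, I invert the similarity coordinates by defining, for $t \in [0,1)$,
\begin{align*}
\phi_i(x,t) \defeq (1-t)^{-1/2}\, u_i\!\bigl(x/\sqrt{1-t},\,-\log(1-t)\bigr), \qquad \psi_i(x,t) \defeq \mathbf{W}_i(x,t) + \phi_i(x,t).
\end{align*}
The derivation of \eqref{eq:transformed-YM-flow-3} from \eqref{eq:De-Turck-YM-flow} is reversible, so $\psi$ is a smooth solution of the Yang--Mills de-Turck flow on $\RR^n \times [0,1)$ with $\psi(\cdot,0) = W + u(\cdot,0)$. I then solve the ODE \eqref{eq:gauge-transformation-ode} with $S(x,0) = \Id$ pointwise in $x$: the right-hand side $-D_\psi^*\phi$ is smooth and $\mf{so}(n)$-valued on $\RR^n \times [0,1)$, and $SO(n)$ is compact, so standard ODE theory yields a smooth $S: \RR^n \times [0,1) \to SO(n)$. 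Setting $A \defeq (S^{-1})^*\psi$, the classical de-Turck calculation (see \cite{de-Turck, Donaldson, Struwe}) gives that $A$ solves \eqref{eq:YM-flow}, while a direct computation yields $S^*A = \psi$.

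It remains to verify the initial condition, the convergence rate, and blowup. Since $S(\cdot,0) = \Id$, we have $A(\cdot,0) = \psi(\cdot,0) = W + u(\cdot,0)$, and the projection identity from Step 1 immediately gives $\Pi_{>0}(A(\cdot,0) - W - \tilde{A}_0) = 0$. For the convergence, the gauge identity $\mathbf{W} - S^*A = -\phi$ combined with the pointwise decay of $u$ yields
\begin{align*}
\|(\mathbf{W}(\cdot,t) - S(t)^*A(\cdot,t))\|_{L^\infty} = (1-t)^{-1/2}\|u(\cdot,-\log(1-t))\|_{L^\infty} \leq C\|\tilde{A}_0\|_{2,\alpha}^{(d)}(1-t)^{-1/2+\delta},
\end{align*}
and dividing by $\|\mathbf{W}(\cdot,t)\|_{L^\infty} = \|W\|_{L^\infty}(1-t)^{-1/2}$ produces the desired bound with $M = C/\|W\|_{L^\infty}$. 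Blowup at $t=1$ follows from gauge invariance of the pointwise curvature norm: $\|F_A(\cdot,t)\|_{L^\infty} = \|F_\psi(\cdot,t)\|_{L^\infty} \geq \|F_{\mathbf{W}}(\cdot,t)\|_{L^\infty} - O((1-t)^{-1+\delta})$, which diverges like $(1-t)^{-1}$ since $\|F_{\mathbf{W}}(\cdot,t)\|_{L^\infty} = (1-t)^{-1}\|F_W\|_{L^\infty}$ and $F_W$ is nonzero.

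The main obstacle is not conceptual but bookkeeping: one must track that both $\phi$ and $\nabla_x \phi$ enjoy enough regularity and decay, uniformly in $x$ on compact $t$-intervals of $[0,1)$, so that the gauge ODE produces a globally defined smooth $S$ up to (but not including) the singular time. This reduces to verifying that the components of $\|u\|_{*}$ -- namely the spatial derivative term $\|Du\|_0^{[-1+\gamma],\delta}$ and the parabolic Hölder term -- transfer cleanly through the scaling identity $\phi(x,t) = (1-t)^{-1/2}u(x/\sqrt{1-t}, -\log(1-t))$, which is a direct chain-rule computation once the bound $\|u\|_{*} \lesssim \|\tilde{A}_0\|_{2,\alpha}^{(d)}$ is in hand.
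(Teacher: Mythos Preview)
Your proposal is correct and follows essentially the same route as the paper's proof: obtain $u$ from Theorem~\ref{thm:rescaled-de-turck-main-thm}, revert the similarity coordinates to get $\psi = \mathbf{W} + \phi$, solve the gauge ODE \eqref{eq:gauge-transformation-ode} for $S$, and set $A = (S^{-1})^*\psi$, after which $S^*A = \psi$ and the decay $\|u(\cdot,\tau)\|_{L^\infty} \lesssim \|\tilde{A}_0\|_{2,\alpha}^{(d)} e^{-\delta\tau}$ yields the convergence estimate exactly as you wrote. Your write-up is in fact more detailed than the paper's own proof, which records only the scaling identity and the final $L^\infty$ bound; your explicit verification of the initial condition via $S(\cdot,0)=\mathrm{Id}$, the blowup argument via gauge-invariance of $\|F_A\|$, and the remark on smoothness of $S$ are all implicit in the paper but worth spelling out.
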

\begin{proof}
    Let $u(y, \tau)$ be the solution to the rescaled Yang--Mills de-Turck flow equation obtained in Theorem \ref{thm:rescaled-de-turck-main-thm}.
    Recall that the solution to the Yang--Mills flow equation is given by
    \begin{align*}
        A(x, t) = S\psi S^{-1}(x, t) + S d_x S^{-1}(x, t)
    \end{align*}
    where $S$ is the gauge transformation defined in \eqref{eq:gauge-transformation-ode} and
    \begin{align*}
        \psi(x, t) = \frac{1}{\sqrt{1 - t}} u(\frac{x}{\sqrt{1 - t}}, -\log(1-t)).
    \end{align*}
    Then, we get the estimate
    \begin{align*}
        \|\sqrt{1 - t}(S^*A - \bf{W})_i(\cdot, t)\|_0 \leq C \|u(\cdot, -\log(1-t))\|_0 \leq C \|\tilde{A}_0\|_{2, \alpha}^{(d)} (1-t)^{\delta}. \quad \qedhere
    \end{align*}
\end{proof}
We obtain Corollary \ref{cor:non-equivariant-solution} using the above theorems.
\begin{proof}[Proof of Corollary \ref{cor:non-equivariant-solution}]
    During the construction of a non-$SO(n)$-equivariant solution, $R>0$ will be chosen larger, and $\epsilon_0>0$ will be chosen smaller.
    Fix $-1 < d \leq -1 + \gamma$, $\epsilon, \mu$ in Theorem \ref{thm:main-theorem}.
    Let $\chi \in C^\infty([0, \infty))$ be a cutoff function such that $\chi \equiv 1$ on $[0, 1]$ and $\chi \equiv 0$ on $[2, \infty)$.
    We define a smooth initial data $\tilde{u}_0$ by
    \begin{align*}
        \tilde{u}_0(x) = -W(x) \chi(|\frac{4(x - R)}{R}|).
    \end{align*}
    Note that $\|\tilde{u}_0\|_{2, \alpha}^{(d)}$ can be arbitrarily small if we take $R$ sufficiently large.
    For sufficiently large $R$, we have
    \begin{align} \label{eq:non-equivariant-initial-condition-estimate}
        \|\tilde{u}_0\|_{2, \alpha}^{(d)} < \epsilon_0
    \end{align}
    with $\epsilon_0 < \epsilon$.
    Let $u$ be the solution to the rescaled Yang--Mills de-Turck flow equation with the initial condition $\tilde{u}_0$ obtained in Theorem \ref{thm:rescaled-de-turck-main-thm}.
    We will show that $u$ cannot be gauge equivalent to an $SO(n)$-equivariant solution.
    We use the fact that the curvature component ${F_A}_{ij}(x, t)$ is invariant under the gauge transformation up to the adjoint action of $SO(n)$, and thus the Frobenius norm $\|{F_A}_{ij}\|_F$ is gauge invariant.
    This implies that, if $u$ is gauge equivalent to an $SO(n)$-equivariant solution, the following holds
    \begin{align*}
        \|{F_{W + u}}_{ij}(Re_1, 0)\|_F = \|{F_{W + u}}_{ij}(-Re_1, 0)\|_F
    \end{align*}
    where $e_1 = (1, 0, \cdots, 0) \in \RR^n$.
    for $1 \leq i, j \leq n$.

    By Lemma \ref{lem:eigenfunction-schauder-estimate}, we have
    \begin{align} \label{eq:non-equivariant-estimate-xi-j-holder}
        \|\xi_i\|_{2, \alpha}^{(-1)} < M_0
    \end{align}
    for $1 \leq i \leq I$ where $M_0>0$ is a constant.
    For sufficiently large $R$, we have
    \begin{align} \label{eq:non-equivariant-estimate-1}
        |\langle \tilde{u}_0(\cdot, 0), \xi_i\rangle_{\boldsymbol{\rho}}| \leq e^{-R^2/16} \pi (R/2)^2 \|W(x)\|_0 \|\xi_i(x)\|_0 < \frac{\epsilon_0}{M_0}
    \end{align}
    for $1 \leq i \leq I$.
    For sufficiently small $\epsilon_0$, estimates \eqref{eq:tu-uj-t-eq-0-estimate} and \eqref{eq:non-equivariant-initial-condition-estimate} imply
    \begin{align}
        |\langle u(\cdot, 0), \xi_i\rangle_{\boldsymbol{\rho}}| & \leq  C \|(u - \iota_+(\tilde{u}_0))(\cdot, 0)\|_{2, \alpha}^{(d)}\|\xi_i\|_0 \leq C(\|\tilde{u}_0\|_{2, \alpha}^{(d)})^2 M_0 \notag \\
                                                                & \leq  C \epsilon_0^2 M_0 < \frac{\epsilon_0}{M_0}\label{eq:non-equivariant-estimate-2}
    \end{align}
    for $1 \leq i \leq I$.
    Set $\bar{u} \in \Omega^1(\mf{so}(n))$ with $\bar{u}(x) \defeq u(x, 0) - \tilde{u}_0(x)$.
    Let $c_i$ be the coefficient of the expansion of $\bar{u}$ in terms of $\xi_i$:
    \begin{align*}
        \bar{u} = \sum_{i = 1}^{I} c_i \xi_i.
    \end{align*}
    Then $|c_i| < \frac{2\epsilon_0}{M_0}$ is obtained by \eqref{eq:non-equivariant-estimate-1} and \eqref{eq:non-equivariant-estimate-2}.
    Combining this with \eqref{eq:non-equivariant-estimate-xi-j-holder}, we get
    \begin{align}
        \|{F_{W + u}}_{ij}(Re_1, 0)\|_{F} & = \|{F_{\bar{u}}}_{ij}(Re_1, 0)\|_F = \|{F_{\bar{u}}}_{ij}(Re_1, 0)\|_F                                                                                        \notag \\
                                          & = \|(\partial_i \bar{u}_j - \partial_j \bar{u}_i + [\bar{u}_i, \bar{u}_j])(Re_1, 0)\|_F \leq \frac{C_0\epsilon_0}{R^2} \label{eq:non-equivariant-estimate-3}
    \end{align}
    for some constant $C_0>0$.
    On the other hand, we explicitly compute
    \begin{align*}
        \|{F_{W}}_{23}(-Re_1, 0)\|_F = \sqrt{2}\frac{(2a - 1)R^2 + 2b}{(aR^2 + b)^2},
    \end{align*}
    and thus
    \begin{align*}
        \|{F_{W}}_{23}(-Re_1, 0)\|_F > \frac{C_1}{R^2}
    \end{align*}
    for some $C_1 > 0$.
    From the similar computation as \eqref{eq:non-equivariant-estimate-3}, for some $C_2 > 0$,
    \begin{align*}
        \|{F_{W + u}}_{23}(-Re_1, 0)\|_F \geq \frac{C_1 - C_2\epsilon_0}{R^2}.
    \end{align*}
    Now, we choose $\epsilon_0$ small enough so that $\|{F_{W+u}}_{23}(Re_1, 0)\|_F < \|{F_{W+u}}_{23}(-Re_1, 0)\|_F$.
    Hence, $u$ cannot be gauge equivalent to an $SO(n)$-equivariant solution.
    Finally, Theorem \ref{thm:main-theorem} gives the desired result.
\end{proof}

\appendix
\renewcommand{\thesection}{\Alph{section}}
\renewcommand{\thetheorem}{\thesection.\arabic{theorem}}
\section{Parabolic Schauder estimates} \label{appendix:parabolic-Schauder-estimates}
Let $n, m \in \mathbb{N}$ and $B_R \subseteq \RR^n$ be the ball of radius $R$ centered at the origin.
We will work with an $\RR^m$-valued function $u$ and an elliptic operator $L$ in $Q_2 \defeq B_2 \times [-4, 0]$ of the form
\begin{align*}
    Lu := \Delta u + \sum_{i=1}^n b^iD_iu + cu,
\end{align*}
with coefficients $b^i, c : Q_2 \to \text{End}(\mathbb{R}^m)$ satisfying
\begin{align} \label{eq:elliptic-operator-coefficients-condition}
    \|b^i\|_{0,\alpha; Q_2} + \|c\|_{0,\alpha; Q_2} \leq \Lambda,
\end{align}
for some fixed constants $\Lambda > 0, \alpha \in (0, 1)$.
For the theorems that follows, we will assume that $u$ is a classical solution to
\begin{align*}
    \tfrac{\partial}{\partial t} u - Lu = h
\end{align*}
in $Q_2$.

We state the local parabolic Schauder estimates (see \cite[Appendix C]{Choi}), along with works by Schlag \cite{Schlag} and Simon \cite{Simon}.
\begin{theorem}[local parabolic Schauder estimate]\label{thm:parabolic-Schauder-estimate}
    The following estimate holds
    \begin{align*}
        \|u\|_{2, \alpha; B_1 \times [-1, 0]} \leq C \left(\|u\|_{0; Q_2} + \|h\|_{0, \alpha; Q_2}\right),
    \end{align*}
    for some constant $C = C(n, \lambda, \Lambda, \alpha) > 0$.
\end{theorem}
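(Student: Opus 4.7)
The plan is to reduce the variable-coefficient estimate to the constant-coefficient case via a standard freezing-of-coefficients argument and then iterate on dyadic parabolic cylinders. Since the system is only weakly coupled through zeroth- and first-order terms (the principal part is a diagonal Laplacian), the componentwise argument reduces to the scalar heat operator; the matrix-valued lower-order coefficients enter only through the bound \eqref{eq:elliptic-operator-coefficients-condition}.

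First, I would establish the model estimate for the pure heat equation $(\partial_t - \Delta)v = g$ on a parabolic cylinder, namely
\[
\|v\|_{2,\alpha;Q_r} \leq C(\|v\|_{0;Q_{2r}} + \|g\|_{0,\alpha;Q_{2r}}),
\]
using the Campanato characterization of parabolic Hölder spaces: a function lies in $C^P_{2,\alpha}$ iff on every parabolic cylinder it can be approximated in $L^2$-mean by a parabolic polynomial of degree two with error of order $r^{2+\alpha}$. For caloric functions (i.e.\ $g=0$) this follows from standard interior estimates for the heat equation together with Taylor expansion at the center. The inhomogeneous case is handled by subtracting the Newtonian-type potential $\int K \ast g$ of $g$ against the heat kernel and estimating the potential via the assumed Hölder regularity of $g$.

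Next, I would pass to the variable-coefficient operator $L = \Delta + b^iD_i + c$. Fix a point $(x_0,t_0) \in Q_1 \times [-1,0]$ and split
\[
(\partial_t - \Delta)u = h + b^iD_iu + cu \eqdef \tilde h.
\]
Applying the heat-equation estimate above on parabolic cylinders $Q_r(x_0,t_0) \subset Q_2$ gives
\[
\|u\|_{2,\alpha;Q_{r/2}(x_0,t_0)} \leq C\bigl(\|u\|_{0;Q_{r}(x_0,t_0)} + \|\tilde h\|_{0,\alpha;Q_{r}(x_0,t_0)}\bigr).
\]
The Hölder norm of $\tilde h$ is controlled via \eqref{eq:elliptic-operator-coefficients-condition} and Lemma \ref{lem:weighted-decay-holder-multiplication-inequality} (unweighted version) by $\|h\|_{0,\alpha} + C\Lambda\|u\|_{1,\alpha}$. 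Covering $Q_1$ by finitely many such small cylinders and using the standard interpolation inequality
\[
\|u\|_{1,\alpha;Q_r} \leq \varepsilon\|u\|_{2,\alpha;Q_r} + C(\varepsilon)\|u\|_{0;Q_r},
\]
one absorbs the $\|u\|_{1,\alpha}$ term into the left-hand side, arriving at the claimed bound with a constant depending only on $n,\Lambda,\alpha$ (and implicitly on the ellipticity constant $\lambda=1$).

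The main obstacle will be handling the lower-order matrix-valued coefficients cleanly without losing the Schauder scaling. Two delicate points: (i) when freezing coefficients, one must ensure the oscillation of $b^i,c$ over a small cylinder produces a factor that can be absorbed, which requires choosing the cylinder radius small depending on $\Lambda$; and (ii) the absorption step only closes if the constants are uniform in the covering, which forces careful bookkeeping of the interpolation constant. Both points are standard once one works in Campanato-type spaces, and the result can alternatively be extracted directly from the scalar estimates in \cite{Simon, Schlag} applied componentwise, with the coupling terms viewed as inhomogeneities.
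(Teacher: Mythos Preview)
The paper does not actually prove this theorem: it is stated in the appendix as a standard result, with the line ``We state the local parabolic Schauder estimates (see \cite[Appendix C]{Choi}), along with works by Schlag \cite{Schlag} and Simon \cite{Simon}),'' and no argument is given. Your sketch via the Campanato characterization, freezing of coefficients, and interpolation/absorption is precisely the approach carried out in those references, so there is nothing to compare---you have supplied the standard proof that the paper merely cites.
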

\begin{theorem}[local $L^2$-Schauder estimate] \label{thm:parabolic-Schauder-Lp-estimate}
    We have an $L^2$-Schauder estimate
    \begin{align*}
        \|u\|_{2, \alpha; B_1 \times [-1, 0]} \leq C  \left(\|u\|_{L^2(Q_2)} + \|h\|_{0, \alpha; Q_2}\right),
    \end{align*}
    for some constant $C = C(n, \lambda, \Lambda, \alpha) > 0$.
\end{theorem}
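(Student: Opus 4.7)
The plan is to reduce the statement to the standard $C^0$-based Schauder estimate (Theorem \ref{thm:parabolic-Schauder-estimate}) by bootstrapping $\|u\|_{0; Q_{R}}$ down to $\|u\|_{L^2(Q_2)}$ through an interpolation--absorption argument. First, a covering-and-rescaling application of Theorem \ref{thm:parabolic-Schauder-estimate} yields the intermediate-scale estimate
\begin{equation*}
\|u\|_{2,\alpha; Q_r} \leq \frac{C}{(R-r)^{2+\alpha}}\|u\|_{0; Q_R} + C\|h\|_{0,\alpha; Q_2}
\end{equation*}
for all $1 \leq r < R \leq 2$, where the $(R-r)^{-(2+\alpha)}$ factor reflects the parabolic scaling $x \mapsto (R-r)x$, $t \mapsto (R-r)^2 t$ applied to the base Schauder inequality, together with the natural scaling of $h$.

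Next, I would invoke a parabolic Gagliardo--Nirenberg-type interpolation
\begin{equation*}
\|u\|_{0; Q_R} \leq \eta\,\|u\|_{2,\alpha; Q_R} + C\eta^{-\beta}\|u\|_{L^2(Q_R)},
\end{equation*}
valid for every $\eta \in (0,1]$ with some $\beta > 0$ determined by the parabolic scaling weights ($-(n+2)/2$ for $L^2$, $0$ for $C^0$, and $2+\alpha$ for $C^{2,\alpha}$). This interpolation can be established either by iterating the classical one-dimensional interpolations in each coordinate direction or by a truncation argument; either way, the scale-invariant structure fixes $\beta$ uniquely.

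Combining the two displayed inequalities and setting $\eta = (R-r)^{2+\alpha}/(2C)$ produces
\begin{equation*}
\|u\|_{2,\alpha; Q_r} \leq \tfrac{1}{2}\|u\|_{2,\alpha; Q_R} + C'(R-r)^{-\gamma}\|u\|_{L^2(Q_2)} + C\|h\|_{0,\alpha; Q_2}
\end{equation*}
for some $\gamma > 0$. Applying Simon's absorbing lemma to the nondecreasing function $r \mapsto \|u\|_{2,\alpha; Q_r}$ on $[1,2]$ then removes the $C^{2,\alpha}$ term from the right-hand side, yielding the desired estimate at $r = 1$.

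The main technical step is the parabolic Gagliardo--Nirenberg interpolation; once that is in place, the remaining iteration is routine Campanato-type bookkeeping. A viable alternative that bypasses the interpolation would be to first derive an intermediate $C^0$-bound via parabolic Moser iteration (or the De Giorgi--Nash--Moser method), applied component-wise to $u$ with the coupling $b^i D_i u + cu$ absorbed into a source whose $L^\infty$-norm is controlled by $\Lambda \|u\|_{C^1}$ and then by \eqref{eq:elliptic-operator-coefficients-condition} plus a further interpolation; one then invokes Theorem \ref{thm:parabolic-Schauder-estimate} on $Q_1 \subset Q_{3/2}$ directly. I prefer the interpolation--absorption route since it avoids any iteration on integrability exponents.
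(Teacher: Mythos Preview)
The paper does not supply its own proof of this theorem: it is stated in the appendix as a known result, with references to \cite[Appendix C]{Choi}, Schlag \cite{Schlag}, and Simon \cite{Simon}. So there is no in-paper argument to compare against.

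That said, your proposed route is sound and is essentially the standard one (and is close in spirit to Simon's scaling method \cite{Simon} and to the argument in \cite[Appendix C]{Choi}). A couple of minor remarks: in the rescaling step you should note that under $x\mapsto \rho x$, $t\mapsto \rho^2 t$ with $\rho=R-r\le 1$, the rescaled coefficients $\rho\, b^i$ and $\rho^2 c$ still satisfy the bound \eqref{eq:elliptic-operator-coefficients-condition} with the same $\Lambda$, so the constant in Theorem~\ref{thm:parabolic-Schauder-estimate} is uniform over the family of rescalings; and the parabolic interpolation $\|u\|_{0}\le \eta\|u\|_{2,\alpha}+C\eta^{-\beta}\|u\|_{L^2}$ is indeed available (e.g.\ by localizing and combining the elliptic Gagliardo--Nirenberg inequality in space with the one-dimensional interpolation in $t$), with the exponent dictated by parabolic scaling as you say. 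With these points checked, Simon's absorption lemma closes the argument exactly as you outline.
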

The following non-standard Schauder estimate holds due to Knerr:
\begin{theorem}\textup{\cite[Theorem 1]{Knerr}} \label{thm:knerr-Schauder-estimate}
    The following estimate holds
    \begin{align*}
        \sum_{j=0}^{2}\|\nabla_x^j u\|_{0, \alpha; B_1 \times [-1, 0]} + & \sup_{\tau \in [-1, 0]}\|\frac{\partial u}{\partial t}(\cdot, \tau)\|_{0, \alpha; B_1}             \\
                                                                         & \leq C \left(\|u\|_{0; Q_{2}} + \sup_{\tau \in [-4, 0]}\|h(\cdot, \tau)\|_{0, \alpha; B_2}\right),
    \end{align*}
    for some constant $C = C(n, \lambda, \Lambda, \theta) > 0$.
\end{theorem}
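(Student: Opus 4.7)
The plan is to deduce this Knerr-type estimate, in which only spatial Hölder regularity of $h$ is assumed (and only a spatial Hölder bound on $\partial_t u$ at each fixed time is concluded), by combining a spatial difference-quotient argument with a direct reading off of $\partial_t u$ from the equation. The main constraint throughout is that we must never invoke a joint space-time Hölder modulus of $h$, which would be required by the usual parabolic Schauder estimate of Theorem~\ref{thm:parabolic-Schauder-estimate}; we only have $\sup_\tau \|h(\cdot,\tau)\|_{0,\alpha;B_2}$ slicewise in time.

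The core step is a spatial $C^{2,\alpha}$ estimate of the form $\sup_\tau \|u(\cdot,\tau)\|_{2,\alpha;B_1} \lesssim \|u\|_{0;Q_2} + \sup_\tau \|h(\cdot,\tau)\|_{0,\alpha;B_2}$. I would prove this by a difference-quotient argument in space: for small $\xi \in \mathbb{R}^n$, the function $v(x,t) = u(x+\xi,t) - u(x,t)$ solves a linear parabolic equation of the same type whose right-hand side is bounded in $L^\infty$ by $C|\xi|^\alpha$, with $C$ depending on the spatial Hölder norms of $b^i$, $c$, $h$ appearing in \eqref{eq:elliptic-operator-coefficients-condition} and on lower-order norms of $u$. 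An interior $L^\infty$ estimate for parabolic equations then yields $\|v\|_{0; B_{3/2} \times [-2,0]} \le C|\xi|^\alpha$, which after dividing by $|\xi|^\alpha$ and taking $\sup_\tau$ gives a spatial $C^{0,\alpha}$ bound on $u$ uniform in $t$. A preliminary regularization via $L^p$-parabolic estimates (using only $\|h\|_{0;Q_2}$) ensures enough classical regularity of $u$ to formally differentiate the equation in space and iterate the same argument on $D_i u$ and $D_iD_j u$, yielding the full spatial $C^{2,\alpha}$ bound.

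With this in hand, the Hölder bound on $\partial_t u$ is immediate from the equation itself: $\partial_t u(\cdot,t) = \Delta u(\cdot,t) + b^i(\cdot,t) D_i u(\cdot,t) + c(\cdot,t) u(\cdot,t) + h(\cdot,t)$, and every term on the right lies in $C^{0,\alpha}(B_1)$ with quantitative control by the previous step and the hypothesis \eqref{eq:elliptic-operator-coefficients-condition} on $b^i$, $c$. No temporal Hölder control of $\partial_t u$ is claimed, which is consistent with the asymmetric hypothesis on $h$. The main obstacle I anticipate is keeping the iterated difference-quotient step honest: naive differentiation of the equation in space produces terms involving $D_i b^j$ and $D_i c$ that are not controlled under \eqref{eq:elliptic-operator-coefficients-condition}. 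The cleanest way around this is to work with discrete difference quotients of both $u$ and the coefficients and to close the induction by interpolation between the preliminary regularity and the target $C^{2,\alpha}$ bound—essentially the bookkeeping Knerr carries out via the maximum principle in the cited reference.
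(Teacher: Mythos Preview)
The paper does not prove this statement: Theorem~\ref{thm:knerr-Schauder-estimate} is stated in the appendix as a direct citation of \cite[Theorem~1]{Knerr}, with no proof or sketch supplied. So there is no ``paper's own proof'' to compare against; the authors treat this as a known black box.

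Your sketch is in the right spirit and roughly parallels Knerr's actual argument, which also proceeds via spatial difference quotients and the maximum principle rather than the full space-time Schauder machinery. A few remarks on the sketch itself: the circularity you flag at the end (needing $C^1$ control of $u$ to bound the coefficient-difference error terms, before you have proved anything) is real, and your proposed fix via preliminary $L^p$ regularity is the standard way out---$W^{2,p}$ estimates with $h \in L^\infty$ give $u \in C^{1,\beta}$ for any $\beta < 1$, which is enough to start the bootstrap. The step from spatial $C^{0,\alpha}$ to spatial $C^{2,\alpha}$ is where the work lies: you cannot literally differentiate the equation (only $C^{0,\alpha}$ coefficients), so one must either run the difference-quotient argument at the level of second differences, or freeze coefficients and use the explicit heat kernel plus a perturbation argument, as Knerr does. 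Your final paragraph acknowledges this but does not really carry it out; for a complete proof you would need to spell out that closing step, which is the genuine content of \cite{Knerr}.
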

Also, we have the estimates for the initial boundary value problem:
\begin{theorem}[local parabolic Schauder estimate with initial data] \label{thm:parabolic-Schauder-initial-data-estimate}
    Let $T \in [1, \infty)$.
    Suppose $u, h$ is a classical solution to
    \begin{align*}
        \tfrac{\partial}{\partial t} u - Lu = h
    \end{align*}
    in $B_2 \times [0, T]$ and a condition \eqref{eq:elliptic-operator-coefficients-condition} holds for the domain $B_2 \times [0, T]$ instead of $Q_2$.
    Then, we have the following estimate
    \begin{align*}
        \|u\|_{2, \alpha; B_1 \times [0, T]} \leq C \left(\|u(\cdot, 0)\|_{2, \alpha; B_2} + \|u\|_{0; B_2 \times [0, T]} + \|h\|_{0, \alpha; B_2 \times [0, T]}\right),
    \end{align*}
    where $C = C(n, \lambda, \Lambda, \alpha) > 0$.
\end{theorem}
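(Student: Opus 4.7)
The plan is to reduce the initial-boundary value estimate to two pieces that we can already handle: an elliptic bound on the initial slice (using $\|u(\cdot,0)\|_{2,\alpha;B_2}$) and an interior parabolic estimate away from the initial time (using Theorem \ref{thm:parabolic-Schauder-estimate}). The glue between them is a time-independent lift of the initial data that removes the boundary value of $u$.

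First I would set $w(x,t) := u(x,t) - u(x,0)$, which satisfies $w(\cdot,0) \equiv 0$ on $B_2$ and, by a direct computation using the definition of $L$,
\begin{align*}
    \frac{\partial w}{\partial t} - L w \;=\; h + L u(\cdot,0) \;=:\; \tilde h,
\end{align*}
with the bound $\|\tilde h\|_{0,\alpha; B_2\times[0,T]} \leq \|h\|_{0,\alpha; B_2\times[0,T]} + C(\Lambda)\,\|u(\cdot,0)\|_{2,\alpha;B_2}$ via \eqref{eq:elliptic-operator-coefficients-condition}. If I can establish
\begin{align*}
    \|w\|_{2,\alpha; B_1\times[0,T]} \;\leq\; C\bigl(\|w\|_{0;B_2\times[0,T]} + \|\tilde h\|_{0,\alpha;B_2\times[0,T]}\bigr)
\end{align*}
whenever $w$ has vanishing initial data, then the stated estimate follows by the triangle inequality together with $\|w\|_0 \leq \|u\|_0 + \|u(\cdot,0)\|_0$. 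So the whole problem reduces to the case of zero initial data.

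With zero initial data, I would split the parabolic slab into two regimes. For $t_0 \geq 4$ the cylinder $B_2(x_0)\times[t_0-4,t_0]$ lies inside $B_2\times[0,T]$, so Theorem \ref{thm:parabolic-Schauder-estimate} applied directly to $w$ already gives the required $C^{2,\alpha}$ control on $B_1\times\{t_0\}$. For $t_0 \in [0,4]$ I would extend $w$ to negative times by $\tilde w(x,t) = 0$ for $t<0$; because $w(\cdot,0)=0$, this extension is continuous. Using the vanishing of $w$ at $t=0$ together with the bound $|w(x,t)| \leq t\,\|\partial_t w\|_{0}$, one performs a freezing-coefficient argument in a parabolic cylinder of radius $\sim\sqrt{t_0}$: comparing $w$ with the solution of the constant-coefficient heat equation having the same right-hand side and vanishing initial data, and using the explicit heat-kernel representation (which yields the classical $C^{2,\alpha}$ bound for the zero-initial-data problem on $\mathbb{R}^n\times[0,\infty)$ in terms of $\|\tilde h\|_{0,\alpha}$), one obtains the desired estimate. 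A finite covering of $B_1\times[0,4]$ by such rescaled cylinders, combined with the interior estimate from the first regime, closes the argument.

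The delicate step is the regime $t_0 \in [0,4]$, because a naive zero-extension across the initial slice is only Lipschitz (not $C^1$) in time, so the PDE is satisfied there only in the distributional sense. The way around this is precisely the zero-initial-data structure: rather than extending and applying Theorem \ref{thm:parabolic-Schauder-estimate} blindly, one exploits the pointwise smallness $|w(x,t)| \lesssim t$ near $t=0$ and runs the freezing/perturbation argument on parabolic balls that shrink as $t_0 \to 0$. Once the constant-coefficient heat-kernel estimate is granted (as in \cite{Schlag,Simon}, or via Knerr's Theorem \ref{thm:knerr-Schauder-estimate}), the perturbation from constant coefficients to variable ones is absorbed by the smallness of the cylinders thanks to the H\"older continuity of $b^i$ and $c$, completing the estimate.
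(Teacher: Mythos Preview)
The paper does not actually supply a proof of this theorem: it is stated in the appendix as a standard result (with the surrounding estimates attributed to \cite{Choi}, \cite{Schlag}, \cite{Simon}, \cite{Knerr}), so there is nothing to compare your argument against. Your reduction---subtract off the time-independent lift $u(\cdot,0)$, then split into an interior regime $t_0\ge 4$ handled by Theorem~\ref{thm:parabolic-Schauder-estimate} and a near-initial regime handled by a heat-kernel/freezing argument---is exactly the standard route and would be an acceptable proof sketch for this appendix-level statement.

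Two points to tighten. First, the sentence ``the cylinder $B_2(x_0)\times[t_0-4,t_0]$ lies inside $B_2\times[0,T]$'' is false for $x_0\neq 0$; you need an intermediate radius (e.g.\ $B_{3/2}$ inside $B_2$) and a covering, which is routine. Second, and more substantively, the bound $|w(x,t)|\le t\,\|\partial_t w\|_0$ that you invoke in the near-initial regime is circular as written: $\partial_t w$ is one of the quantities you are trying to estimate, and as stated there is no smallness that lets you absorb it. The clean fix is to bypass this step entirely and argue directly from the Duhamel representation for the constant-coefficient heat equation with zero initial data (which already gives the full $C^{2,\alpha}$ bound in terms of $\|\tilde h\|_{0,\alpha}$, as in \cite{Schlag,Simon}), then perturb to variable coefficients on cylinders small enough that the H\"older oscillation of $b^i,c$ is absorbed. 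You gesture at this in your last paragraph; making it the primary argument (rather than the $|w|\lesssim t$ heuristic) closes the gap.
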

Following the proof of Corollary B.2 in \cite{CHODOSH} with Theorem \ref{thm:parabolic-Schauder-initial-data-estimate} gives the $L^2$-estimate for the initial boundary value problem:
\begin{corollary}[local $L^2$-Schauder estimate with initial data] \label{cor:parabolic-Schauder-Lp-initial-data-estimate}
    We have the following estimate
    \begin{align*}
        \|u\|_{2, \alpha; B_1 \times [-4, 0]} \leq C \left(\|u(\cdot, -4)\|_{2, \alpha; B_2} + \|u\|_{L^2(Q_2)} + \|h\|_{0, \alpha; Q_2}\right),
    \end{align*}
    where $C = C(n, \lambda, \Lambda, \alpha) > 0$.
\end{corollary}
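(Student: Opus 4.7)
The plan is to bootstrap Theorem \ref{thm:parabolic-Schauder-initial-data-estimate} by a standard Morrey-type absorption argument, converting the $C^0$-norm of $u$ on the right-hand side into the weaker $L^2$-norm. Write $\Omega_r := B_r \times [-4, 0]$ and set
$$H := \|u(\cdot, -4)\|_{2, \alpha; B_2} + \|u\|_{L^2(Q_2)} + \|h\|_{0, \alpha; Q_2}.$$
The task is to show $\|u\|_{2, \alpha; \Omega_1} \leq C H$.

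First I would derive a rescaled analogue of Theorem \ref{thm:parabolic-Schauder-initial-data-estimate} on nested cylinders: for $1 \leq r' < r \leq 2$,
$$\|u\|_{2, \alpha; \Omega_{r'}} \leq \frac{C}{(r-r')^{\beta}}\bigl(\|u(\cdot, -4)\|_{2, \alpha; B_r} + \|u\|_{0; \Omega_r} + \|h\|_{0, \alpha; \Omega_r}\bigr),$$
for some $\beta = \beta(n, \alpha)$. This follows by a covering argument: cover $\Omega_{r'}$ by parabolic cylinders of scale $\rho \sim (r-r')$; those disjoint from $\{t = -4\}$ are controlled by Theorem \ref{thm:parabolic-Schauder-estimate} after rescaling to unit size, while those touching the initial slice are controlled by Theorem \ref{thm:parabolic-Schauder-initial-data-estimate}, and the resulting bounds are summed. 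Next, I would apply the standard interpolation inequality
$$\|u\|_{0; \Omega_r} \leq \epsilon \, \|u\|_{2, \alpha; \Omega_r} + C \epsilon^{-q} \|u\|_{L^2(\Omega_r)},$$
valid for every $\epsilon \in (0,1)$ and some $q = q(n, \alpha)$. Setting $\Phi(r) := \|u\|_{2, \alpha; \Omega_r}$ and choosing $\epsilon$ comparable to $(r-r')^{\beta}$ in the combined inequality yields
$$\Phi(r') \leq \tfrac{1}{2}\Phi(r) + \frac{C'}{(r-r')^{\beta'}} H$$
for some larger exponent $\beta'$ and all $1 \leq r' < r \leq 2$.

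The main obstacle is that $(r-r')^{-\beta'}$ blows up as $r' \to r$, so one cannot conclude $\Phi(1) \leq C H$ by a direct bootstrap. The remedy is the classical iteration lemma from regularity theory: a locally bounded nonnegative function $\Phi$ on $[1, 2]$ satisfying $\Phi(r') \leq \theta \Phi(r) + A(r-r')^{-\beta'}$ with $\theta \in (0,1)$ is necessarily bounded by $C(\theta, \beta') A$ at $r = 1$, shown by iterating along a geometric sequence $r_k \searrow 1$. To legitimately apply this, one must verify $\Phi(r) < \infty$ on $[1, 2)$, which is immediate because $u$ is a classical solution and hence smooth on each $\Omega_r$ with $r < 2$. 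Invoking the iteration lemma with $\theta = 1/2$ then gives $\Phi(1) \leq C H$, which is the required estimate.
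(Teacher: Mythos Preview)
Your proposal is correct and follows the standard interpolation--absorption route that the paper is invoking when it writes ``Following the proof of Corollary B.2 in \cite{CHODOSH} with Theorem \ref{thm:parabolic-Schauder-initial-data-estimate}''; that reference carries out precisely the scheme you describe (nested-cylinder Schauder estimate, multiplicative interpolation of $\|u\|_0$ between $\|u\|_{2,\alpha}$ and $\|u\|_{L^2}$, then the iteration lemma to absorb the top-order term). Your treatment is thus essentially identical to the paper's intended proof.
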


\newcommand{\etalchar}[1]{$^{#1}$}


\begin{thebibliography}{CHHW22}
    \bibitem[ABDS22]{ABDS-Ricci-cylindrical-asymptotic}
    S.~Angenent, S.~Brendle, P.~Daskalopoulos, and N.~Sesum.
    \newblock Unique asymptotics of compact ancient solutions to three-dimensional ricci flow.
    \newblock {\em Comm. Pure Appl. Math.}, 75(5):1032--1073, 2022.

    \bibitem[ADS19]{ADS-asymptotics-MCF}
    S.~Angenent, P.~Daskalopoulos, and N.~Sesum.
    \newblock Unique asymptotics of ancient convex mean curvature flow solutions.
    \newblock {\em J. Differential Geom.}, 111(3):381--455, 2019.

    \bibitem[ADS20]{ADS-TWO-CONVEX}
    S.~Angenent, P.~Daskalopoulos, and N.~Sesum.
    \newblock Uniqueness of two-convex closed ancient solutions to the mean curvature flow.
    \newblock {\em Ann. of Math.}, 192(2):353--436, 2020.

    \bibitem[AV97]{Angenent-Degenerate-Neckpinches}
    S.~B. Angenent and J.~J.~L. Velazquez.
    \newblock Degenerate neckpinches in mean curvature flow.
    \newblock {\em J. Reine Angew. Math.}, 482:15--66, 1997.

    \bibitem[BC19]{Brendle-Choi-1}
    S.~Brendle and K.~Choi.
    \newblock Uniqueness of convex ancient solutions to mean curvature flow in $\mathbb{R}^3$.
    \newblock {\em Invent. Math.}, 217(1):35--76, 2019.

    \bibitem[BC21]{Brendle-Choi-2}
    S.~Brendle and K.~Choi.
    \newblock Uniqueness of convex ancient solutions to mean curvature flow in higher dimensions.
    \newblock {\em Geom. Topol.}, 25(5):2195--2234, 2021.

    \bibitem[BDNS23]{Brendle-Daskalopoulos-Naff-Sesum}
    S.~Brendle, P.~Daskalopoulos, K.~Naff, and N.~Sesum.
    \newblock Uniqueness of compact ancient solutions to the higher-dimensional ricci flow.
    \newblock {\em J. Reine Angew. Math.}, 2023(795):85--138, 2023.

    \bibitem[BDS21]{BDS-compact-ancient-Ricci-flow}
    S.~Brendle, P.~Daskalopoulos, and N.~Sesum.
    \newblock Uniqueness of compact ancient solutions to three-dimensional ricci flow.
    \newblock {\em Invent. Math.}, 226(2):579--651, 2021.

    \bibitem[BN23]{Brendle-Naff}
    S.~Brendle and K.~Naff.
    \newblock Rotational symmetry of ancient solutions to the ricci flow in higher dimensions.
    \newblock {\em Geom. Topol.}, 27(1):153--226, 2023.

    \bibitem[Bre20]{Brendle}
    S.~Brendle.
    \newblock Ancient solutions to the ricci flow in dimension 3.
    \newblock {\em Acta Math.}, 225(1):1--102, 2020.

    \bibitem[BW15]{Bizon-Wasserman}
    P.~Bizo\'{n} and A.~Wasserman.
    \newblock Nonexistence of shrinkers for the harmonic map flow in higher dimensions.
    \newblock {\em Int. Math. Res. Not. IMRN}, (17):7757--7762, 2015.

    \bibitem[BW17]{Bernstein}
    J.~Bernstein and L.~Wang.
    \newblock A topological property of asymptotically conical self-shrinkers of small entropy.
    \newblock {\em Duke Math. J.}, 166(3):403--435, 2017.

    \bibitem[CCMS24]{CHODOSH}
    O.~Chodosh, K.~Choi, C.~Mantoulidis, and F.~Schulze.
    \newblock Mean curvature flow with generic initial data.
    \newblock {\em Invent. Math.}, pages 1--100, 2024.

    \bibitem[CCR15]{Carlotto}
    A.~Carlotto, O.~Chodosh, and Y.~Rubinstein.
    \newblock Slowly converging yamabe flows.
    \newblock {\em Geom. Topol.}, 19(3):1523--1568, 2015.

    \bibitem[CCS23]{CHODOSH-MCF-2}
    O.~Chodosh, K.~Choi, and F.~Schulze.
    \newblock Mean curvature flow with generic initial data {II}.
    \newblock {\em arXiv preprint arXiv:2302.08409}, 2023.

    \bibitem[CDD{\etalchar{+}}22]{choi2022classification}
    B.~Choi, P.~Daskalopoulos, W.~Du, R.~Haslhofer, and N.~Sesum.
    \newblock Classification of bubble-sheet ovals in r4.
    \newblock {\em arXiv preprint arXiv:2209.04931}, 2022.

    \bibitem[CH24]{ChoiB}
    B.~Choi and P.-K. Hung.
    \newblock Thom's gradient conjecture for nonlinear evolution equations.
    \newblock {\em arXiv preprint arXiv:2405.17510}, 2024.

    \bibitem[CHH22]{CHH-Ancient-low-entropy}
    K.~Choi, R.~Haslhofer, and O.~Hershkovits.
    \newblock Ancient low-entropy flows, mean-convex neighborhoods, and uniqueness.
    \newblock {\em Acta Math.}, 228(2):217--301, 2022.

    \bibitem[CHH24]{choi2021nonexistence}
    K.~Choi, R.~Haslhofer, and O.~Hershkovits.
    \newblock A nonexistence result for wing-like mean curvature flows in $\mathbb{R}^4$.
    \newblock {\em Geom. Topol.}, 28(7):3095--3134, 2024.

    \bibitem[CHHW22]{CHHW-ancient-asymptotic-cylinder}
    K.~Choi, R.~Haslhofer, O.~Hershkovits, and B.~White.
    \newblock Ancient asymptotically cylindrical flows and applications.
    \newblock {\em Invent. Math.}, 229(1):139--241, 2022.

    \bibitem[CM12]{CM-genericMCF}
    T.~H. Colding and W.~P. Minicozzi.
    \newblock Generic mean curvature flow {I}; generic singularities.
    \newblock {\em Ann. of Math.}, pages 755--833, 2012.

    \bibitem[CM22]{Choi}
    K.~Choi and C.~Mantoulidis.
    \newblock Ancient gradient flows of elliptic functionals and morse index.
    \newblock {\em Amer. J. Math.}, 144(2):541--573, 2022.

    \bibitem[CS21]{CHODOSH2}
    O.~Chodosh and F.~Schulze.
    \newblock Uniqueness of asymptotically conical tangent flows.
    \newblock {\em Duke Math. J.}, 170(16):3601--3657, 2021.

    \bibitem[CZ15]{Chen-Zhang-entropy}
    Z.~Chen and Y.~Zhang.
    \newblock Stabilities of homothetically shrinking yang-mills solitons.
    \newblock {\em Trans. Amer. Math. Soc.}, 367(7):5015--5041, 2015.

    \bibitem[DeT83]{de-Turck}
    D.~M. DeTurck.
    \newblock Deforming metrics in the direction of their ricci tensors.
    \newblock {\em J. Differential Geom.}, 18(1):157--162, 1983.

    \bibitem[DH24]{du2024hearing}
    W.~Du and R.~Haslhofer.
    \newblock Hearing the shape of ancient noncollapsed flows in $\mathbb{R}^4$.
    \newblock {\em Comm. Pure Appl. Math.}, 77(1):543--582, 2024.

    \bibitem[DHS10]{DHS-classification-compact-ancient-curve-shortening-flow}
    P.~Daskalopoulos, R.~Hamilton, and N.~Sesum.
    \newblock Classification of compact ancient solutions to the curve shortening flow.
    \newblock {\em J. Differential Geom.}, 84(3):455--464, 2010.

    \bibitem[DHS12]{DHS-classification-compact-ancient-Ricci-flow}
    P.~Daskalopoulos, R.~Hamilton, and N.~Sesum.
    \newblock Classification of ancient compact solutions to the ricci flow on surfaces.
    \newblock {\em J. Differential Geom.}, 91(2):171--214, 2012.

    \bibitem[DK97]{Donaldson}
    S.~K. Donaldson and P.~B. Kronheimer.
    \newblock {\em The geometry of four-manifolds}.
    \newblock Oxford Univ. Press, 1997.

    \bibitem[DS19]{DONNINGER}
    R.~Donninger and B.~Sch{\"o}rkhuber.
    \newblock Stable blowup for the supercritical yang-mills heat flow.
    \newblock {\em J. Differential Geom.}, 113(1):55--94, 2019.

    \bibitem[Eck00]{Ecker}
    Klaus Ecker.
    \newblock Logarithmic sobolev inequalities on submanifolds of euclidean space.
    \newblock {\em J. Reine Angew. Math.}, pages 105--118, 2000.

    \bibitem[Eva22]{Evans}
    L.~C. Evans.
    \newblock {\em Partial differential equations}, volume~19.
    \newblock Amer. Math. Soc., 2022.

    \bibitem[Fee14]{Feehan}
    P.~Feehan.
    \newblock Global existence and convergence of solutions to gradient systems and applications to yang-mills gradient flow, 2014.

    \bibitem[Gas02]{Gastel}
    A.~Gastel.
    \newblock Singularities of first kind in the harmonic map and yang-mills heat flows.
    \newblock {\em Math. Z.}, 242:47--62, 2002.

    \bibitem[Gro01]{Grotowski}
    J.~F. Grotowski.
    \newblock Finite time blow-up for the yang-mills heat flow in higher dimensions.
    \newblock {\em Math. Z.}, 237(2):321--333, 2001.

    \bibitem[GS20]{GLOGIC}
    I.~Glogi{\'c} and B.~Sch{\"o}rkhuber.
    \newblock Nonlinear stability of homothetically shrinking yang-mills solitons in the equivariant case.
    \newblock {\em Comm. Partial Differential Equations}, 45(8):887--912, 2020.

    \bibitem[HT04a]{Hong-Tian-Asymptotical-behavior}
    M.-C. Hong and G.~Tian.
    \newblock Asymptotical behavior of the yang-mills flow and singular yang-mills connections.
    \newblock {\em Math. Ann.}, 330:441--472, 2004.

    \bibitem[HT04b]{Hong-Tian}
    M.-C. Hong and G.~Tian.
    \newblock Global existence of the m-equivariant yang-mills flow in four-dimensional spaces.
    \newblock {\em Comm. Anal. Geom.}, 12(1):183--211, 2004.

    \bibitem[Kne80]{Knerr}
    B.~F. Knerr.
    \newblock Parabolic interior schauder estimates by the maximum principle.
    \newblock {\em Arch. Ration. Mech. Anal.}, 75:51--58, 1980.

    \bibitem[KS16]{Kelleher-Street-entropy}
    C.~Kelleher and J.~Streets.
    \newblock Entropy, stability, and yang-mills flow.
    \newblock {\em Commun. Contemp. Math.}, 18(2):1550032, 2016.

    \bibitem[KS18]{Kelleher-generel-blowup}
    C.~Kelleher and J.~Streets.
    \newblock Singularity formation of the yang-mills flow.
    \newblock In {\em Ann. Inst. Henri Poincaré C Anal. Non Linéaire}, volume~35, pages 1655--1686, 2018.

    \bibitem[Nai94]{Naito}
    H.~Naito.
    \newblock Finite time blowing-up for the yang-mills gradient flow in higher dimensions.
    \newblock {\em Hokkaido Math. J.}, 23(3):451--464, 1994.

    \bibitem[Rad92]{Rade}
    J.~Rade.
    \newblock On the yang-mills heat equation in two and three dimensions.
    \newblock {\em J. Reine Angew. Math.}, 426:123--164, 1992.

    \bibitem[RS78]{METHODS-OF-MODERN-MATHEMATICAL-PHYSICS}
    M.~Reed and B.~Simon.
    \newblock {\em Methods of modern mathematical physics; Analysis of operators}.
    \newblock Marcourt Brace Jovanovich, 1978.

    \bibitem[Sch96a]{Schlag}
    W.~Schlag.
    \newblock Schauder and \(l^p\) estimates for parabolic systems via campanato spaces.
    \newblock {\em Comm. Partial Differential Equations}, 21(7--8):1141--1175, 1996.

    \bibitem[Sch96b]{Schlatter-long-time-existence-small-data}
    A.~Schlatter.
    \newblock Global existence of the yang-mills flow in four dimensions.
    \newblock {\em J. Reine Angew. Math.}, 475:133--148, 1996.

    \bibitem[Sch97]{Schlatter-long-time-behavior-4dim}
    A.~Schlatter.
    \newblock Long-time behavior of the yang-mills flow in four dimensions.
    \newblock {\em Ann. Global Anal. Geom.}, 15(1):1--25, 1997.

    \bibitem[Ses04]{Sesum-limiting-behavior}
    N.~Sesum.
    \newblock Limiting behavior of the ricci flow.
    \newblock {\em arXiv preprint}, math/0402194, 2004.

    \bibitem[Ses06a]{Sesum-convergence-ricci-soliton}
    N.~Sesum.
    \newblock Convergence of the ricci flow toward a soliton.
    \newblock {\em Comm. Anal. Geom.}, 14(2):283--343, 2006.

    \bibitem[Ses06b]{Sesum-Ricci}
    N.~Sesum.
    \newblock Linear and dynamical stability of ricci-flat metrics.
    \newblock {\em Duke Math. J.}, 131(1):1--26, 2006.

    \bibitem[Ses08]{Sesum-MCF}
    N.~Sesum.
    \newblock Rate of convergence of the mean curvature flow.
    \newblock {\em Comm. Pure Appl. Math.}, 61(4):464--485, 2008.

    \bibitem[Sim97]{Simon}
    L.~Simon.
    \newblock Schauder estimates by scaling.
    \newblock {\em Calc. Var. Partial Differential Equations}, 5:391--407, 1997.

    \bibitem[SSTZ98]{Schlatter}
    A.~E. Schlatter, M.~Struwe, and A.~S. Tahvildar-Zadeh.
    \newblock Global existence of the equivariant yang-mills heat flow in four space dimensions.
    \newblock {\em Amer. J. Math.}, 120(1):117--128, 1998.

    \bibitem[Str94]{Struwe}
    M.~Struwe.
    \newblock The yang-mills flow in four dimensions.
    \newblock {\em Calc. Var. Partial Differential Equations}, 2:123--150, 1994.

    \bibitem[SWZ22]{Sire-Wei-Zheng}
    Y.~Sire, J.~Wei, , and Y.~Zheng.
    \newblock Infinite time bubbling for the $su(2)$ yang--mills heat flow on $\mathbb{R}^4$.
    \newblock {\em arXiv:2208.13875}, 2022.

    \bibitem[Uhl82]{Uhlenbeck-Kato}
    K.~K. Uhlenbeck.
    \newblock Removable singularities in yang-mills fields.
    \newblock {\em Comm. Math. Phys.}, 83:11--29, 1982.

    \bibitem[Wal14]{Waldron-1}
    A.~Waldron.
    \newblock Self-duality and singularities in the yang-mills flow.
    \newblock {\em Columbia University Thesis}, 2014.

    \bibitem[Wal16]{Waldron-2}
    A.~Waldron.
    \newblock Instantons and singularities in the yang-mills flow.
    \newblock {\em Calc. Var. Partial Differential Equations}, 55:1--31, 2016.

    \bibitem[Wal19]{Waldron}
    A.~Waldron.
    \newblock Long-time existence for yang-mills flow.
    \newblock {\em Invent. Math.}, 217(3):1069--1147, 2019.

    \bibitem[Wei04]{Weinkove}
    B.~Weinkove.
    \newblock Singularity formation in the yang-mills flow.
    \newblock {\em Calc. Var. Partial Differential Equations}, 19(2):211--220, 2004.
\end{thebibliography}
\end{document}